\documentclass[12pt,a4paper]{article}

\topmargin=-0.7cm
\oddsidemargin=0cm
\evensidemargin=0cm
\textheight=23.7cm
\textwidth=16cm

\usepackage{amsbsy,amsfonts,amsmath,amssymb,amsthm}
\usepackage{array}
\usepackage{bm}
\usepackage[dvipsnames]{xcolor}
\usepackage{enumerate}
\usepackage{mathrsfs}
\usepackage{latexsym}
\usepackage{mathtools}
\mathtoolsset{showmanualtags, showonlyrefs}

\definecolor{wineRed}{rgb}{0.7,0,0.3}
\definecolor{grandBleu}{rgb}{0,0,0.8}
\definecolor{Green}{rgb}{0,0.4,0}
\definecolor{blueViolet}{rgb}{0.4,0,1.0}
\definecolor{bloodOrange}{rgb}{0.85,0.05,0}

\newcommand{\KS}[1]{{\color{black}{#1}}}

\usepackage[colorlinks=true]{hyperref}
\hypersetup{urlcolor=blue, citecolor=blue, linkcolor=blue}

\usepackage{cite}

\bibliographystyle{plain05}
\usepackage{comment}

\DeclareMathAlphabet{\mathpzc}{OT1}{pzc}{m}{it}

\usepackage[textsize=small]{todonotes}
\setlength{\marginparwidth}{2cm}

\setcounter{secnumdepth}{2} \setcounter{section}{0}

\numberwithin{equation}{section}

\theoremstyle{plain}
\newtheorem{mTh}{Main Theorem}
\newtheorem{thm}{Theorem}
\newtheorem{lem}{Lemma}
\newtheorem{keyLem}{Key-Lemma}
\newtheorem{cor}{Corollary}

\theoremstyle{definition}
\newtheorem{defn}{Definition}
\newtheorem{rem}{Remark} 

\newtheorem{fact}{Fact}


\def\bB{{\mathbb B}}

\def\R{{\mathbb R}}

\def\bB{{\mathbb B}}
\def\N{{\mathbb{N}}}

\def\L{{\mathcal L}}

\def\ds{\displaystyle}
\def\ts{\textstyle}

\def\ds{\displaystyle}
\def\ts{\textstyle}

\def\Sgn{\mathop{\mathrm{Sgn}}\nolimits}

\newcommand{\trace}[1]{{_{|_{#1}}}}

\begin{document}
\vspace*{0.5cm}
\begin{center}
    \textbf{\LARGE{K}obayashi--{W}arren--{C}arter System of \\[-0.25ex] Singular Type under \\[0.5ex] Dynamic Boundary Condition}
    \footnote{
This work is supported by Grant-in-Aid for Scientific Research (C) No. 20K03672, JSPS.\\
AMS Subject Classification: 
35K51, 
35K55, 
35K61, 
35K67, 
82C26. 
\\
Keywords: 
KWC system of grain boundary motion, 
singular diffusion equation,
dynamic boundary condition, 
existence of solution, 
mathematical meanings of the non-standard
terms.
}
    \vspace{1.5cm}

\textit{Dedicated to Professor Pierluigi Colli on the occasion of his 65th birthday}

    \vspace{1.5cm}
{\sc Ryota Nakayashiki}
    \\[1ex]
Department of General Education, Salesian Polytechnic\\
4--6--8, Oyamagaoka, Machida, 194--0215, Tokyo, Japan.
{\ttfamily nakayashiki.ryota@salesio-sp.ac.jp}
    \\[1cm]
{\sc Ken Shirakawa}
    \\[1ex]
Department of Mathematics, Faculty of Education, Chiba University\\ 
1--33, Yayoi-cho, Inage-ku, 263--8522, Chiba, Japan.\\
{\ttfamily sirakawa@faculty.chiba-u.jp}
\end{center}
\vspace{2cm}

\noindent
{\bf Abstract.} \indent
    In this paper, we consider a coupled system, known as \emph{Kobayashi--Warren--Carter system,}  abbreviated as the \emph{KWC system}.  KWC system consists of an Allen--Cahn type equation and a singular diffusion equation, and it was proposed by [Kobayashi et al, Phys. D, 140, 141--150 (2000)] as a possible mathematical model of grain boundary motion. The focus of this work is on the dynamic boundary condition imposed in our KWC system, and the mathematical interest is in a conflicting situation between: the continuity of the transmission condition included in the dynamic boundary condition; and the discontinuity encouraged by the singular diffusion equation. On this basis, we will prove the Main Theorem concerned with the existence of solution to our KWC system with energy-dissipation. Additionally, as a sub-result, we will prove a key-lemma that is to give a certain mathematical interpretation for the conflicting situation. 
\newpage

\section*{Introduction}
\vspace{-1ex}

Let $\kappa>0$, $ \kappa_\Gamma>0 $, $0<T<\infty$, and $1<N\in \N$ be fixed constants. Let $ (0, T) \subset \R $ be a fixed time-interval. Let $\Omega$ be a bounded spatial domain, with a smooth ($C^\infty$-) boundary $ \Gamma := \partial \Omega$, and let $n_\Gamma \in \mathbb{S}^{N-1}$ be the unit outer normal on $\Gamma$. Besides, we let $Q:=(0,T) \times \Omega$ and $\Sigma:=(0,T) \times \Gamma$.

  In this paper, we take a nonnegative constant $\varepsilon\ge0$ to consider the following coupled system of parabolic type PDEs, denoted by (KWC)$_{\varepsilon}$.
\medskip

\noindent
  (KWC)$_{\varepsilon}$:

    \begin{equation}\label{eta-eq}
\left\{\begin{aligned}
  & \partial_t \eta -\kappa^2\varDelta \eta + g(\eta) + \alpha'(\eta)|D\theta| =0 \mbox{ in $Q$,}
  \\[.5ex]
    & \partial_t \eta_\Gamma -\varDelta_\Gamma (\varepsilon^2\eta_\Gamma)+\nabla \eta\trace{\Gamma}\cdot n_\Gamma +g_\Gamma(\eta_\Gamma) +\alpha'(\eta_\Gamma)|\theta\trace{\Gamma} -\theta_\Gamma|=0, 
    \\
    & \hspace{4ex}\mbox{and } \eta\trace{\Gamma}= \eta_\Gamma \mbox{ on $\Sigma$,}
  \\[.5ex]
  & \eta(0,\cdot)=\eta_0 \mbox{ in $\Omega $, and } \eta_\Gamma(0,\cdot)=\eta_{\Gamma,0},
  \mbox{ on $\Gamma$.}
\end{aligned}
\right.
\end{equation}

\begin{equation}\label{theta-eq}
  \hspace{-.35cm} \left\{\begin{aligned}
& \alpha_0(\eta)\partial_t \theta - \mathrm{div}\left(\alpha(\eta)\dfrac{D \theta}{|D \theta|} \right)=0 \mbox{ in $Q$,}
\\[.5ex]
      & \alpha_{\Gamma, 0}(\eta) \partial_t \theta_\Gamma -\kappa_\Gamma^2 \varDelta_\Gamma \theta_\Gamma + \ts{(\alpha(\eta)\frac{D\theta}{|D\theta|})\trace{\Gamma} \cdot n_\Gamma} =0, \mbox{ and } \theta\trace{\Gamma}= \theta_\Gamma \mbox{ on $\Sigma$,}
\\[.5ex]
 &\theta(0,\cdot)=\theta_0 \mbox{ in $\Omega $, and }  \theta_\Gamma(0,\cdot) = \theta_{\Gamma,0} \mbox{ on $\Gamma$.}
\end{aligned}
\right.
\end{equation}

The above system is based on Kobayashi--Warren--Carter system, which was proposed by Kobayashi et al \cite{MR1752970, MR1794359} as a phase-field model of grain boundary motion in a polycrystal. Our system (KWC)$_{\varepsilon}$ is one of modified versions of the original system, and the principal modification is in the point that the boundary data $\eta_\Gamma$ and $\theta_\Gamma$ are treated as unknown variables, governed by the dynamic boundary conditions on $\Sigma$.

Referring to the modelling method of \cite{MR1752970, MR1794359},  the system (KWC)$_\varepsilon$ is derived as a gradient flow of the following energy functional, called \emph{free-energy}:
\begin{equation}\label{Free-Ener}
\begin{array}{ll}
    \multicolumn{2}{l}{\ds [\bm{\eta}, \bm{\theta}] = [\eta,\eta_\Gamma, \theta,\theta_\Gamma]\in D(\mathscr{F}_{\varepsilon}) \mapsto \mathscr{F}_{\varepsilon}(\bm{\eta}, \bm{\theta}) = \mathscr{F}_{\varepsilon}(\eta, \eta_\Gamma,\theta,\theta_\Gamma)}
    \\[2ex]
    & \ds := \frac{\kappa^2}{2}\int_\Omega |\nabla \eta|^2\,dx + \int_\Omega \hat{g}(\eta) \,dx +\int_\Gamma |\nabla_\Gamma (\varepsilon \eta_\Gamma)|^2 \, d\Gamma +\int_\Gamma \hat{g}_\Gamma(\eta_\Gamma) \, d \Gamma
\\[2ex]
    & \qquad \ds+ \int_\Omega \alpha(\eta)|D \theta| +\int_\Omega \alpha(\eta_{\Gamma}) |\theta\trace{\Gamma} -\theta_\Gamma| \, d \Gamma + \frac{\kappa_\Gamma^2}{2}\int_\Gamma |\nabla_\Gamma \theta_\Gamma|^2\,d\Gamma \in[0,\infty],
\end{array}
\end{equation}
with the effective domain:
\begin{equation*}
    D(\mathscr{F}_{\varepsilon}):=\left\{\begin{array}{l|l}[\bm{\eta}, \bm{\theta}] & \hspace{-3ex} \parbox{9cm}{
        \vspace{-2.5ex}
        \begin{itemize}
            \item$ \bm{\eta} = [\eta, \eta_{\Gamma}] \in H^1(\Omega) \times H^{\frac{1}{2}}(\Gamma) $, $ \varepsilon \eta_\Gamma \in H^1(\Gamma) $, and $\eta\trace{\Gamma}=\eta_\Gamma$ in $H^{\frac12}(\Gamma)$
                \vspace{-1ex}
            \item $ \bm{\theta} = [\theta, \theta_\Gamma] \in \bigl( BV(\Omega) \cap L^2(\Omega) \bigr) \times H^1(\Gamma)$
        \end{itemize}
    \vspace{-2.5ex}
}\end{array}\right\}.
\end{equation*}
In the context, ``$\trace{\Gamma}$" denotes the trace on $\Gamma$ for a Sobolev / BV function, $d\Gamma$ denotes the area element on $\Gamma$, $\nabla_\Gamma$ denotes the surface gradient on $\Gamma$, and $\varDelta_\Gamma$ denotes the Laplacian on on $ \Gamma $, i.e. the so-called Laplace--Beltrami operator (cf. \cite{MR1484545}). On this basis, the unknowns $ \bm{\eta} = [\eta, \eta_\Gamma] $ and $ \bm{\theta}= [\theta, \theta_\Gamma] $ are order parameters of the orientation order and the orientation angle in a  polycrystal, respectively. The components $ \eta $ and $ \eta_\Gamma $ (resp. $ \theta $ and $ \theta_\Gamma $) are to reproduce the dynamic exchanges of physical phases between $ Q $ and $ \Sigma $, via the transmission condition $ \eta \trace{\Gamma} = \eta_\Gamma $ in \eqref{eta-eq} (resp. $ \theta\trace{\Gamma} = \theta_\Gamma $ in \eqref{theta-eq}). In particular, the components $\eta$ and $ \eta_\Gamma $ are supposed to take values on $ [0, 1] $ and the threshold values $1$ and $0$ indicate the completely oriented phase and the disoriented phase of orientation, respectively. $g=g(\eta)$ and $g_\Gamma=g_\Gamma(\eta_\Gamma)$ in \eqref{eta-eq} are given Lipschitz continuous functions on $\R$, and $\hat{g}=\hat{g}(\eta)$ and $\hat{g}=\hat{g}_\Gamma (\eta_\Gamma)$ are the primitives of $ g $ and $ \hat{g} $, respectively. $0<\alpha_0=\alpha_0(\eta)$ and $ 0 < \alpha_{\Gamma, 0} = \alpha_{\Gamma, 0}(\eta_\Gamma) $ in \eqref{theta-eq} are a given locally Lipschitz functions. $0<\alpha=\alpha(\eta)$ in \eqref{eta-eq}--\eqref{theta-eq} is a given $C^2$-convex function, and $\alpha'=\alpha'(\eta)$ is the differential of $\alpha$. $\eta_0$, $\eta_{\Gamma,0}$, $\theta_0$, and $\theta_{\Gamma,0}$ are given initial data for the components $\eta,\eta_\Gamma$, $\theta_\Gamma$ and $\theta_\Gamma$, respectively. 


In recent years, a vast number of studies have addressed mathematical models under dynamic-boundary conditions (cf. \cite{MR2469586, MR2548486, MR2836555, MR2668289, MR3268865, MR3670006, MR3038131, MR3082861, MR3462536, MR3362773, MR2577805, MR3281854, cherfils2012long, MR2629535, MR3306048, MR2861821, MR3130137, MR2509552, MR3506285, MR2275977, MR3432081, MR3485909, MR4345548, MR4303582, MR4529496}). In the line of these previous works, the mathematical method of Kobayashi--Warren--Carter type systems have been studied by a lot of mathematicians (cf. \cite{MR2469586, MR2548486, MR2836555, MR2668289, MR3268865, MR3670006, MR3038131, MR3082861, MR3462536, MR3362773}), and a number of qualitative results for $L^2$-based solutions have been obtained from various viewpoints. In most of these, the boundary condition is considered only in a few simple cases, such as time-independent Dirichlet / Neumann cases, and the variety of the boundary conditions, such as dynamic cases, were not focused so much.

On the other hand, in the study of single PDE, there have been previous works \cite{MR3661429,MR4150355} which dealt with singular type diffusion equations, under dynamic boundary condition. In the recent works, the qualitative results of the systems, such as the existence, uniqueness, and continuous dependence of solution, e.t.c., were obtained by means of general theory of nonlinear evolution equation. 

Based on the background, we set the objective in this paper to establish the mathematical theory for the parabolic systems (KWC)$_\varepsilon $, for $ \varepsilon \geq 0 $, under dynamic boundary condition, that are governed by singular diffusion $ -\mathrm{div} \bigl( \alpha(\eta) \frac{D \theta}{|D \theta|} \bigr) $ with unknown-dependent weight $ \alpha(\eta) $. To this end, we will prove the following Main Theorem, as the principal result of this paper.
\begin{description}
\item[Main Theorem 1:]The existence of solution to (KWC)$_{\varepsilon}$ with energy-dissipation. 
\end{description}
    Furthermore, as a sub-result, the rigorous mathematical meanings of the non-standard terms $ -\mathrm{div} \bigl( \alpha(\eta) \frac{D \theta}{|D \theta|} \bigr) $, $ \ts{(\alpha(\eta)\frac{D\theta}{|D\theta|})\trace{\Gamma} \cdot n_\Gamma} $, and the transmission condition $ \theta\trace{\Gamma} = \theta_\Gamma  $, will be clarified through the proof of a key-lemma. This sub-result will give a  certain mathematical interpretation for the conflicted situation between: the continuity of the transmission condition included in the dynamic boundary condition; and the discontinuity encouraged by the singular diffusion equation. 
\medskip

Here is the content of this paper. In Section 1, we list preliminaries which are used throughout this paper. In Section 2, we state the Main Theorem 1, with the assumptions and the definition of the solutions to (KWC)$_{\varepsilon}$. In Sections 3--4, some key-properties will be verified as groundworks for the proofs of the Main Theorems, and in particular, the sub-result of this paper will be stated as Key-Lemma \ref{KeyLem01} discussed in Section 3. The proof of Main Theorem 1 is discussed in Section 5, and it is discussed on the basis of the key-properties, discussed in Section 3 and 4. The final Section 6 is an appendix that is to supply general information of the specific notions used in this paper. 
\vspace{-1ex}

\section{Preliminaries}
\vspace{-1ex}

In this section, we outline some notations and known facts, as preliminaries of our study. 
For any dimension $ d \in \N $, the $ d $-dimensional Lebesgue measure is denoted by $ \L^d $. Unless otherwise specified, the measure theoretical phrases, such as ``a.e.'', ``$ dt $'', ``$ dx $'', and so on, are with respect to the Lebesgue measure in each corresponding dimension. Also, in the observations on a smooth surface $ S $, the phrase ``a.e.'' is with respect to the Hausdorff measure in each corresponding Hausdorff dimension, and the area element on $ S $ is denoted by $ dS $. 
\bigskip

    Throughout this paper, let $ 0 < T < \infty $ and $1<N \in \N$ be fixed constants, and let $\Omega \subset \R^N$ be a bounded domain, such that:  
\begin{description}
\item[($\bm{\omega}$1)]$\Omega$ has a $C^\infty$-boundary $\Gamma:=\partial\Omega$;
\item[($\bm{\omega}$2)]the function $\ds \mathrm{dist}_\Gamma: x\in \overline{\Omega} \mapsto \mathrm{dist}_\Gamma(x):=\inf_{y\in \Gamma}|x-y| \in [0,\infty)$ forms a $C^\infty$-function on a neighborhood of $\Gamma$.
\end{description}
\KS{
Also, we write ``\,$\trace{\Gamma}$\,'' to denote the trace of Sobolev / BV function on $ \Omega $ to $ \Gamma $. On this basis, we let:
}
\begin{align}
    & H := L^2(\Omega) \times L^2(\Gamma), ~~ 
     ~~\mbox{and}~~ W := (BV(\Omega) \cap L^2(\Omega)) \times H^1(\Gamma),
\end{align}
and for any $ \varepsilon \geq 0 $, we define:
\begin{align}\label{V_e1}
    & V_\varepsilon:=\left\{\begin{array}{l|l} v=[\xi,\xi_\Gamma] \in H^1(\Omega)\times H^{\frac12}(\Gamma) & \parbox{3.25cm}{
            $\varepsilon \xi_\Gamma \in H^1(\Gamma) $ and $ \xi\trace{\Gamma} =\xi_\Gamma $ in $H^{\frac12}(\Gamma)$}\end{array}\right\}.
\end{align}
Additionally, we let 
    \begin{align}
        & L^\infty(\Omega, \Gamma) := L^\infty(\Omega) \times L^\infty(\Gamma),
    \end{align}
    and  for every $ -\infty \leq a \leq b \leq \infty $, we define
\begin{align}
    & \Lambda_a^b := \left\{ \begin{array}{l|l}
        [z, z_\Gamma] \in H & \parbox{8.25cm}{
            $ a \leq z \leq b $ a.e. in $ \Omega $, and $ a \leq z_\Gamma \leq b $ a.e. on $  \Gamma $
        }
    \end{array} \right\}. 
\end{align}
\begin{rem}\label{Rem.V_e}
If $\varepsilon>0$ (resp. $\varepsilon =0$), then $V_\varepsilon$, given in \eqref{V_e1} is closed linear space in $H^1(\Omega) \times H^1(\Gamma)$ (resp. $H^1(\Omega) \times H^{\frac12}(\Gamma)$) and hence, they are Hilbert space endowed with the standard inner products of $H^1(\Omega) \times H^1(\Gamma)$ (resp. $H^1(\Omega) \times H^{\frac12}(\Gamma)$).
\end{rem}

\subsection{Abstract notations}
For an abstract Banach space $ X $, we denote by $ |{}\cdot{}|_X $ the norm of $ X $, and denote by $ \langle{}\cdot{}, {}\cdot{}\rangle_X $ the duality pairing between $ X $ and the dual space $ X^* $ of $ X $. Let $ \mathcal{I}_X : X \to X $ be the identity map from $ X $ onto $ X $. In particular, when $ X $ is a Hilbert space, we denote by $ ({}\cdot{},{}\cdot{})_X $ the inner product in $ X $.

For Banach spaces $X_1,\cdots,X_d$ with $1<d \in \N$, let $X_1 \times \cdots X_d$ be the product Banach space endowed with the norm $|{}\cdot{}|_{X_1\times \cdots \times X_d}:=|{}\cdot{}|_{X_1}+\cdots +|{}\cdot{}|_{X_d}$. However, when all $X_1,\cdots,X_d$ are Hilbert spaces, $X_1 \times \cdots \times X_d$ denotes the product Hilbert space endowed with the inner product $({}\cdot{},{}\cdot{})_{X_1\times \cdots \times X_d}$ and the norm $|{}\cdot{}|_{X_1\times \cdots \times X_d}:=(|{}\cdot{}|_{X_1}^2+\cdots +|{}\cdot{}|_{X_d}^2)^{\frac12}$.

Also, let $X$ be an abstract Banach space, and let $\mathcal{A}:X\to 2^X$ be a set-valued operator. Then, we define $D(\mathcal{A}):=\{z\in X {}\,|\,{}\mathcal{A}z\neq \emptyset \}$ as the domain of $\mathcal{A}$.

\subsection{Notations in real analysis}
For arbitrary $ a, b \in [-\infty, \infty] $, we define 
$ a \vee b := \max \{ a, b \} \mbox{ and } a \wedge b := \min \{ a, b \} $, 
and especially, we write $[a]^+:= a \vee 0$ and $[b]^-:=-(b \wedge 0)$, respectively.

Let $ d \in \N $ be any fixed dimension. Then, we simply denote by $ |x| $ and $ x \cdot y $ the Euclidean norm of $ x \in \R^d $ and the standard scalar product of  $ x, y \in \R^d $, respectively. Also, we denote by $ \mathbb{B}^d $ and $ \mathbb{S}^{d -1} $ the $ d $-dimensional unit open ball centered at the origin, and its boundary, respectively, i.e.:
\begin{equation*}
\mathbb{B}^d := \left\{ \begin{array}{l|l}
x \in \R^d & |x| < 1
\end{array} \right\} \mbox{ and } \mathbb{S}^{d -1} := \left\{ \begin{array}{l|l}
x \in \R^d & |x| = 1
\end{array} \right\}.
\end{equation*}
In particular, when $d>1$, we write $x \le y$, if $x_i \le y_i$, for all $i=1,\cdots, d$, and define:
\begin{equation*}
\left\{\parbox{11cm}{
$x \vee y:=[x_1 \vee y_1,\dots, x_d \vee y_d]$, $x \wedge y:=[x_1 \wedge y_1,\dots, x_d \wedge y_d]$,
\\[0.5ex]
$[x]^+:=\bigl[[x_1]^+,\dots, [x_d]^+\bigr]$ and $[y]^-:=\bigl[[y_1]^-,\dots, [y_d]^-\bigr]$,
}\right. \mbox{ for all $x,y \in \R^d$.}
\end{equation*}

Additionally, we note the following elementary fact.
\begin{fact}\label{Fact0}
Let $ m \in \N $ be a fixed finite number. If $ \{ A_1, \dots, A_m \} \subset \R $ and $ \{ a_n^k \}_{n = 1}^{\infty} $, $ k= 1,\dots, m $, fulfill that:
    \vspace{-2ex}
\begin{equation*}
\varliminf_{n \to \infty} a_n^k \geq A_k \mbox{, $ k=1, \dots, m $, and } \varlimsup_{n \to \infty} \sum_{k=1}^{m} a_n^k \leq \sum_{k=1}^m A_k.
\end{equation*}
    \vspace{-2ex}
Then, it holds that ~~$ \ds \lim_{n \to \infty} a_n^k = A_k $, $ k = 1, \dots, m $.
\end{fact}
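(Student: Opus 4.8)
The plan is to reduce the desired two-sided convergence to a single matching \emph{upper} estimate for each $\varlimsup_{n\to\infty} a_n^k$, since the hypothesis $\varliminf_{n\to\infty} a_n^k \ge A_k$ already provides the lower half. First I would check that all the relevant upper and lower limits are finite, so that the elementary sub-/super-additivity rules for $\varliminf$ and $\varlimsup$ may be applied without $\infty-\infty$ ambiguities: because each $A_k$ is a finite real number, the bound $\varliminf_n a_n^k \ge A_k$ forces $a_n^k \ge A_k - 1$ for all large $n$, so each of the $m$ sequences is bounded below; summing the first hypothesis gives $\varliminf_n \sum_{k=1}^m a_n^k \ge \sum_{k=1}^m \varliminf_n a_n^k \ge \sum_{k=1}^m A_k$, which together with the assumed $\varlimsup_n \sum_{k=1}^m a_n^k \le \sum_{k=1}^m A_k$ yields that $\sum_{k=1}^m a_n^k$ converges to the finite value $\sum_{k=1}^m A_k$. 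In particular this sum is bounded, and combined with the lower bounds on the individual terms, each $a_n^k$ is then bounded above as well, so every $\varlimsup_n a_n^k$ and $\varliminf_n a_n^k$ is a finite real number.

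Next, I would fix an arbitrary index $j \in \{1,\dots,m\}$ and write $a_n^j = \bigl(\sum_{k=1}^m a_n^k\bigr) - \sum_{k \ne j} a_n^k$. Taking $\varlimsup_{n\to\infty}$ and using $\varlimsup_n(x_n+y_n) \le \varlimsup_n x_n + \varlimsup_n y_n$, the already-established convergence of $\sum_{k=1}^m a_n^k$, the identity $\varlimsup_n\bigl(-\sum_{k\ne j} a_n^k\bigr) = -\varliminf_n \sum_{k\ne j} a_n^k$, and the estimate $\varliminf_n \sum_{k\ne j} a_n^k \ge \sum_{k\ne j}\varliminf_n a_n^k \ge \sum_{k\ne j} A_k$, I arrive at $\varlimsup_n a_n^j \le \sum_{k=1}^m A_k - \sum_{k\ne j} A_k = A_j$. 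Together with the hypothesis $\varliminf_n a_n^j \ge A_j$ this sandwiches the sequence, $A_j \le \varliminf_n a_n^j \le \varlimsup_n a_n^j \le A_j$, hence $\lim_{n\to\infty} a_n^j = A_j$; since $j$ was arbitrary, the conclusion holds for every $k = 1,\dots,m$.

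There is no genuine obstacle in this argument: it is a short exercise in the calculus of upper and lower limits. The only point that deserves attention is the preliminary finiteness step, which is precisely what legitimizes distributing $\varlimsup$ and $\varliminf$ over the finite sum and the finite difference in the second paragraph; without it the manipulations there could in principle be vacuous.
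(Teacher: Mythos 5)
Your proof is correct. The paper states Fact~\ref{Fact0} without proof, treating it as elementary, and your argument is precisely the standard one implicitly intended: the hypotheses force $\sum_{k=1}^m a_n^k \to \sum_{k=1}^m A_k$, and then writing $a_n^j = \bigl(\sum_{k=1}^m a_n^k\bigr) - \sum_{k\ne j} a_n^k$ and using subadditivity of $\varlimsup$ together with $\varliminf_n \sum_{k\ne j} a_n^k \ge \sum_{k\ne j} A_k$ sandwiches each sequence between $A_j$ and $A_j$. Your preliminary finiteness step, which rules out $\infty-\infty$ ambiguities before distributing $\varlimsup$ and $\varliminf$ over sums and differences, is exactly the right care to take and closes the only point where such an argument could be glossed over.
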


Let $ d, m \in \N $ be fixed dimensions. For an open set $ U \subset \R^d $, $\sigma$-algebra $ \mathfrak{M}(U) \subset 2^{\R^d} $ on $ U $, and a measure $ \mu : \mathfrak{M}(U) \longrightarrow \R^m $, let $ |\mu| : \mathfrak{M}(U) \longrightarrow [0, \infty] $ be the total variation of $ \mu $. Also, for a positive measure $ \mu_0 : \mathfrak{M}(U) \longrightarrow [0, \infty] $, let $ \frac{\mu}{\mu_0} : U \longrightarrow \mathbb{S}^{d -1} $ be the Radon--Nikod\'{y}m density of the measure $ \mu $ for $ \mu_0 $. 

\subsection{Notations in variational analysis}
For any proper lower semi-continuous (l.s.c. from now on) and convex function $ \Psi:X\to (-\infty,\infty] $ defined on a Hilbert space $ X $, we denote by $ D(\Psi) $ its effective domain, and denote by $ \partial \Psi $ its subdifferential. The subdifferential $ \partial \Psi $ is a set-valued map corresponding to a weak differential of $ \Psi $, and it has a maximal monotone graph in the product space $ X \times X $. More precisely, for each $ z_0 \in X $, the value $ \partial \Psi(z_0) $ is defined as a set of all elements $ z_0^* \in X $ which satisfy the following variational inequality:
\begin{equation*}
(z_0^*, z -z_0)_X \leq \Psi(z) -\Psi(z_0) \mbox{, for any $ z \in D(\Psi) $.}
\end{equation*}
We often use the notation ``$ [z_0, z_0^*] \in \partial \Psi $ in $ X \times X $\,'', to mean that ``$ z_0^* \in \partial \Psi(z_0) $ in $ X $ with $ z_0 \in D(\partial \Psi) $'', by identifying the operator $ \partial \Psi $ with its graph in $ X \times X $.

\begin{rem}[Examples of the subdifferentials]\label{exConvex}
    ~~~~
\begin{description}
    \item[(I)]As one of representatives of the subdifferentials, we exemplify the following set-valued function $ \Sgn^d : \R^d \rightarrow 2^{\R^d} $, for each dimension $ d \in \N $, which is given as:
\begin{equation*}
\omega \in \R^N \mapsto \Sgn^d(\omega) := \left\{ \begin{array}{ll}
\ds \frac{\omega}{|\omega|}, & \mbox{if $ \omega \ne 0 $,}
\\[2.5ex]
\overline{\mathbb{B}^d}, & \mbox{otherwise.}
\end{array} \right.
\end{equation*}
It is known that the set-valued function $ \Sgn^d $ coincides with the subdifferential of the Euclidean norm $ |{}\cdot{}| : \omega \in \R^d \mapsto |\omega| = \sqrt{\omega \cdot \omega} \in [0, \infty) $, i.e.:
\begin{equation*}
\partial |{}\cdot{}|(\omega) = \Sgn^d(\omega), \mbox{ for any $ \omega \in D(\partial |{}\cdot{}|) = \R^d$.}
\end{equation*}
\item[(II)]
    Let us define proper l.s.c. and convex function $ \Psi_{\Omega, \Gamma} : H \longrightarrow [0, \infty] $ by letting:
        \begin{align}\label{Psi_OmgGm}
            \Psi_{\Omega, \Gamma} ~:& [z, z_\Gamma] \in H \mapsto \Psi_{\Omega, \Gamma}(z, z_\Gamma) := \left\{ \begin{array}{ll}
                \multicolumn{2}{l}{\ds \frac{1}{2} \int_\Omega |\nabla z|^2 \, dx +\frac{1}{2} \int_\Gamma |\nabla z_\Gamma|^2 \, d \Gamma,}
                \\[2ex]
                & \mbox{if $ [z, z_\Gamma] \in V_1 $,}
                \\[2ex]
                \infty, & \mbox{otherwise.}
            \end{array} \right.
        \end{align}
        Then, the subdifferential $ \partial \Psi_{\Omega, \Gamma} \subset H \times H $ is computed as a single valued operator, formulated as follows:
        \begin{align}
            \partial \Psi_{\Omega, \Gamma} : ~& [z, z_\Gamma] \in D(\partial \Psi_{\Omega, \Gamma}) = V_1 \cap \bigl( H^2(\Omega) \times H^2(\Gamma) \bigr) \subset H 
            \\
            &\mapsto \Psi_{\Omega, \Gamma}(z, z_\Gamma)  = \left[ \begin{array}{c}
                -\varDelta z \\ -\varDelta_\Gamma z_\Gamma +\KS{(\nabla z)\trace{\Gamma}} \cdot n_\Gamma
            \end{array} \right] \in H.
        \end{align}
\end{description}
\end{rem}

Finally, we recall a notion of functional-convergence, known as \emph{$ \mathit{\Gamma} $-convergence.}
 
\begin{defn}[$\Gamma$-convergence: cf. \cite{MR1201152}]\label{Def.Gamma}
    Let $ X $ be an abstract Hilbert space. Let $ \Psi : X \rightarrow (-\infty, \infty] $ be a proper functional, and let $ \{ \Psi_n \}_{n = 1}^\infty $ be a sequence of proper functionals $ \Psi_n : X \rightarrow (-\infty, \infty] $, $ n \in \N $.  Then, it is said that $ \{ \Psi_n \}_{n = 1}^\infty $ $ \Gamma $-converges to $ \Psi $ on $ X $, iff. the following two conditions are fulfilled.
\begin{description}
    \item[\hypertarget{G_lb}{$\bm{\Gamma}$1}) Lower-bound condition:]$ \ds \varliminf_{n \to \infty} \Psi_n(\check{z}_n) \geq \Psi(\check{z}) $, if $ \check{z} \in X $, $ \{ \check{z}_n \}_{n = 1}^\infty \subset X $, and $ \check{z}_n \to \check{z} $ in $ X $ as $ n \to \infty $.
    \item[(\hypertarget{G_opt}{$\bm{\Gamma}$2}) Optimality condition:]for any $ \hat{z} \in D(\Psi) $, there exists a sequence $ \{ \hat{z}_n \}_{n = 1}^\infty \subset X $ such that $ \hat{z}_n \to \hat{z} $ in $ X $ and $ \Psi_n(\hat{z}_n) \to \Psi(\hat{z}) $, as $ n \to \infty $.
\end{description} 
\end{defn}
In the $ \Gamma $-convergence of convex functions, we can easily see the following fact. 
\begin{fact}\label{Rem.Gamma}
    Let $ \Psi : X \longrightarrow (-\infty, \infty] $ and $ \Psi_n : X \longrightarrow (-\infty, \infty] $, $ n \in \N $, be proper l.s.c. and convex functions on a Hilbert space $ X $, such that $ \{ \Psi_n \}_{n \in N} $ $ \Gamma $-converges to $ \Psi $ on $ X $ as $ n \to \infty $. Besides, let us assume that:
\begin{center}
$ \Psi_n \to \Psi $ on $ X $, in the sense of Mosco, as $ n \to \infty $,
\vspace{-1ex}
\end{center}
and
\begin{equation*}
\left\{ ~ \parbox{10cm}{
    $ [z, z^*] \in X \times X $, $ [z_n, z_n^*] \in \partial \Psi_n $ in $ X \times X $, for all $ n \in \N $,
    \\[1ex]
    $ z_n \to z $ in $ X $ and $ z_n^* \to z^* $ weakly in $ X $, as $ n \to \infty $.
} \right.
\end{equation*}
Then, it holds that:
\begin{equation*}
[z, z^*] \in \partial \Psi \mbox{ in $ X \times X $, and } \Psi_n(z_n) \to \Psi(z) \mbox{, as $ n \to \infty $.}
\end{equation*}
\end{fact}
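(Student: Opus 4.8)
The plan is to argue directly from the variational characterization of the subdifferential recalled above, feeding it the two defining properties of $ \Gamma $-convergence. In fact only the conditions (\hyperlink{G_lb}{$\bm{\Gamma}$1}) and (\hyperlink{G_opt}{$\bm{\Gamma}$2}) are needed for this deduction, since the relevant primal sequence $ \{ z_n \} $ converges \emph{strongly}; the Mosco hypothesis is recorded only for compatibility with later applications and adds nothing here, being already implied by $ \Gamma $-convergence for the strong topology. The starting point is that, because $ [z_n, z_n^*] \in \partial \Psi_n $ in $ X \times X $, for every $ n \in \N $ and every $ w \in X $ one has
\begin{equation*}
    (z_n^*,\, w - z_n)_X \leq \Psi_n(w) - \Psi_n(z_n).
\end{equation*}

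First I would prove the inclusion $ [z, z^*] \in \partial \Psi $. Fix an arbitrary $ w \in D(\Psi) $, and take by the optimality condition (\hyperlink{G_opt}{$\bm{\Gamma}$2}) a recovery sequence $ \{ w_n \}_{n=1}^\infty \subset X $ with $ w_n \to w $ in $ X $ and $ \Psi_n(w_n) \to \Psi(w) $. Putting $ w = w_n $ in the displayed inequality and letting $ n \to \infty $: on the left-hand side $ z_n^* \to z^* $ weakly while $ w_n - z_n \to w - z $ strongly, so the inner product converges to $ (z^*, w - z)_X $ (using boundedness of weakly convergent sequences together with the definition of weak convergence); on the right-hand side $ \Psi_n(w_n) \to \Psi(w) $, whereas the lower-bound condition (\hyperlink{G_lb}{$\bm{\Gamma}$1}) applied along $ z_n \to z $ gives $ \varliminf_{n \to \infty} \Psi_n(z_n) \geq \Psi(z) $. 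Since $ \varlimsup_n (a_n - b_n) \leq \varlimsup_n a_n - \varliminf_n b_n $, this yields $ (z^*, w - z)_X \leq \Psi(w) - \Psi(z) $. As $ w \in D(\Psi) $ was arbitrary, $ [z, z^*] \in \partial \Psi $ in $ X \times X $; in particular $ z \in D(\partial \Psi) \subset D(\Psi) $, hence $ \Psi(z) < \infty $.

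Next I would establish the energy convergence $ \Psi_n(z_n) \to \Psi(z) $. One half, $ \varliminf_n \Psi_n(z_n) \geq \Psi(z) $, is exactly (\hyperlink{G_lb}{$\bm{\Gamma}$1}) along $ z_n \to z $. For the reverse bound, apply (\hyperlink{G_opt}{$\bm{\Gamma}$2}) to the admissible point $ z \in D(\Psi) $ itself, obtaining $ \{ \tilde{w}_n \}_{n=1}^\infty \subset X $ with $ \tilde{w}_n \to z $ in $ X $ and $ \Psi_n(\tilde{w}_n) \to \Psi(z) $; testing the subdifferential inequality at $ w = \tilde{w}_n $ and rearranging gives $ \Psi_n(z_n) \leq \Psi_n(\tilde{w}_n) - (z_n^*, \tilde{w}_n - z_n)_X $. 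Here $ \tilde{w}_n - z_n \to 0 $ strongly and $ z_n^* \to z^* $ weakly, so the inner product tends to $ 0 $, and therefore $ \varlimsup_n \Psi_n(z_n) \leq \Psi(z) $. Combining the two inequalities gives $ \lim_n \Psi_n(z_n) = \Psi(z) $.

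I do not anticipate a genuine obstacle: the argument is entirely soft. The only points needing care are the continuity of the inner product along the two constructed sequences (weak convergence of one factor against strong convergence of the other) and the bookkeeping of $ \varliminf $ and $ \varlimsup $ when the difference $ \Psi_n(w_n) - \Psi_n(z_n) $ is split. The one situation in which the reasoning would genuinely break down is if the primal sequence converged only weakly to $ z $; that is precisely the case in which the stronger Mosco hypothesis — rather than bare $ \Gamma $-convergence — would become indispensable.
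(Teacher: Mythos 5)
Your proof is correct. The paper offers no proof of this Fact (it is introduced with ``we can easily see'' as a standard consequence of $\Gamma$-convergence for convex functions), and your argument --- testing the subdifferential inequality of $\Psi_n$ against a recovery sequence for an arbitrary $w \in D(\Psi)$ to pass to the limit in $(z_n^*, w_n - z_n)_X \le \Psi_n(w_n) - \Psi_n(z_n)$, and then against a recovery sequence for $z$ itself to get $\varlimsup_n \Psi_n(z_n) \le \Psi(z)$ --- is exactly the expected soft argument, with the weak--strong pairing of $z_n^*$ against $w_n - z_n$ handled correctly. Two minor remarks: the finiteness of $\Psi(z)$ should be extracted \emph{before} asserting $[z,z^*] \in \partial\Psi$ rather than deduced from it (it is forced already by $(z^*, w - z)_X \le \Psi(w) - \varliminf_n \Psi_n(z_n)$ together with $\varliminf_n \Psi_n(z_n) \ge \Psi(z)$, the left-hand side being a finite real), and your observation that the Mosco hypothesis is redundant when the primal sequence converges strongly is accurate --- it becomes essential only when $z_n \to z$ merely weakly, which is not the configuration assumed here.
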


\subsection{Specific notations}
We fix a large open ball $ \bB_\Omega \subset \R^N $ such that $ \bB_\Omega \supset \overline{\Omega}$.
Besides, for any $ u \in BV(\Omega) \cap L^2(\Omega) $ and any $ \gamma \in H^{\frac{1}{2}}(\Gamma) $, we define a function $ [u]_\gamma^{\rm ex} \in BV(\R^N) \cap W^{1, 1}(\R^N \setminus \overline{\Omega}) $, by letting
\begin{equation*}
    \begin{array}{c}
        x \in \R^N \mapsto [u]_\gamma^{\rm ex}(x) := \left\{ \begin{array}{l}
            [u]^*(x), \mbox{ if $ x \in \Omega $,}
            \\[1ex]
            [\gamma]^{\rm ex}(x), \mbox{ if $ x \in \R^N \setminus \overline{\Omega} $,}
        \end{array} \right.
        \\[3ex]
        \mbox{with $ [\gamma]^{\rm ex} := [\mathfrak{ex}_{\Omega} \circ \mathfrak{hm}_{\Gamma}] (\gamma) $ in $ H^1(\mathbb{R}^N) \cap W^{1, 1}(\R^N) $,}
    \end{array}
\end{equation*}
    where $ \mathfrak{ex}_\Omega : BV(\Omega) \longrightarrow BV(\R^N) $ is the extension operator as in Appendix (Fact\,1), and $ \mathfrak{hm}_\Gamma : H^{\frac{1}{2}}(\Gamma) \longrightarrow H^1(\Omega) $ is the harmonic extension as in Remark \ref{Rem.ex_hm} \KS{(ex.0)} in Appendix. 

For any $ \beta \in H^1(\Omega) \cap L^\infty(\Omega) $, any $ \gamma \in L^1(\Gamma) $ and any $ u \in BV(\Omega) \cap L^2(\Omega) $, we define a Radon measure $ [\beta |D u|]_\gamma \in \mathscr{M}_{\rm loc}(\R^N) $ by letting:
\begin{equation}\label{01_[bt|Du|]_gm}
\begin{array}{c}
\displaystyle
[\beta |Du|]_\gamma(B) := \int_{B \cap \Omega} [\beta]^* d |Du| +\int_{B \cap \Gamma} \beta |u -\gamma| \, d \Gamma +\int_{B \setminus \overline{\Omega}} [\beta]^{\rm ex}|D [\gamma]^{\rm ex}| \, dx,
\\[2ex]
\mbox{for any bounded Borel set $ B \subset \R^N $,}
\end{array}
\end{equation}
\KS{where $ [\beta]^\mathrm{ex} \in H^1(\R^N) $ is the extension of $ \beta $ as in Remark \ref{Rem.ex_hm} (ex.2) in Appendix. Here, referring to the previous works, e.g. \cite[Theorem 6.1]{MR2306643}, we can reduce the measure $ [\beta|Du|]_\gamma $ in the following simple form:}
\begin{equation}\label{02_[bt|Du|]_gm}
[\beta |Du|]_\gamma = [\beta]^{\rm ex} \, |D[u]_\gamma^{\rm ex}| \mbox{ \ in $ \mathscr{M}_{\rm loc}(\R^N) $.}
\end{equation}
In addition, we can see the following facts.
\KS{
\begin{fact}\label{Fact2}
    (cf. \cite[Theorem 3.88]{MR1857292})
    If $ \{ u_n \}_{n = 1}^\infty \subset BV(\Omega) \cap L^2(\Omega) $, $ u \in BV(\Omega) \cap L^2(\Omega) $ and $ u_n \to u $ in $ L^2(\Omega) $ and strictly in $ BV(\Omega) $ as $ n \to \infty $, then $ [\beta|Du_n|]_\gamma(\overline{\Omega}) \to [\beta |Du|]_\gamma(\overline{\Omega}) $ as $ n \to \infty $.
\end{fact}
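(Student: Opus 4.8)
The plan is to reduce everything to the identity \eqref{02_[bt|Du|]_gm}, which rewrites $[\beta|Du|]_\gamma(\overline{\Omega})$ as the total variation of the extended $BV$-function $[u]_\gamma^{\rm ex}$, weighted by the $H^1(\R^N)$-function $[\beta]^{\rm ex}$, over a fixed bounded open set (say $\bB_\Omega$, which contains $\overline{\Omega}$). Concretely, first I would observe that by \eqref{01_[bt|Du|]_gm}--\eqref{02_[bt|Du|]_gm},
\begin{equation*}
[\beta|Du_n|]_\gamma(\overline{\Omega}) = \int_{\bB_\Omega} [\beta]^{\rm ex} \, d\bigl|D[u_n]_\gamma^{\rm ex}\bigr| \quad\text{and}\quad [\beta|Du|]_\gamma(\overline{\Omega}) = \int_{\bB_\Omega} [\beta]^{\rm ex} \, d\bigl|D[u]_\gamma^{\rm ex}\bigr|,
\end{equation*}
since the common boundary piece $\int_{B\cap\Gamma}$ and the exterior piece $\int_{B\setminus\overline{\Omega}}$ are absorbed into the right-hand measure on all of $\bB_\Omega$ (the extension $[\gamma]^{\rm ex}$ is fixed and independent of $n$, so the exterior contribution is the same constant for every $n$). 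Thus the claim becomes: if $u_n \to u$ in $L^2(\Omega)$ and strictly in $BV(\Omega)$, then $[u_n]_\gamma^{\rm ex} \to [u]_\gamma^{\rm ex}$ strictly in $BV(\bB_\Omega)$.

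Second, I would verify that strict convergence is preserved by the gluing operation $u \mapsto [u]_\gamma^{\rm ex}$. In $L^1$ (equivalently $L^1_{\rm loc}$) the convergence is immediate, since $[u_n]_\gamma^{\rm ex}$ and $[u]_\gamma^{\rm ex}$ agree outside $\Omega$ and equal $[u_n]^*$, $[u]^*$ inside, and $u_n\to u$ in $L^1(\Omega)$ follows from $L^2(\Omega)$-convergence on the bounded set $\Omega$. For the total variations, the standard decomposition of the derivative of a glued $BV$-function across $\Gamma$ gives
\begin{equation*}
\bigl|D[u_n]_\gamma^{\rm ex}\bigr|(\bB_\Omega) = |Du_n|(\Omega) + \int_\Gamma \bigl|u_n\trace{\Gamma} - \gamma\bigr| \, d\Gamma + \bigl|D[\gamma]^{\rm ex}\bigr|(\bB_\Omega \setminus \overline{\Omega}),
\end{equation*}
and likewise for $u$; here the first term converges by the strict $BV(\Omega)$-hypothesis, the last term is $n$-independent, and the middle term converges because strict $BV(\Omega)$-convergence forces $u_n\trace{\Gamma} \to u\trace{\Gamma}$ in $L^1(\Gamma)$ (continuity of the trace operator from $BV(\Omega)$ with the strict topology into $L^1(\Gamma)$, cf. \cite[Theorem 3.88]{MR1857292}). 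Hence $\bigl|D[u_n]_\gamma^{\rm ex}\bigr|(\bB_\Omega) \to \bigl|D[u]_\gamma^{\rm ex}\bigr|(\bB_\Omega)$, which together with the $L^1$-convergence is exactly strict convergence in $BV(\bB_\Omega)$.

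Third, with strict convergence of the extended functions in hand, I would invoke the classical fact that strict convergence in $BV$ implies weak-$*$ convergence of the total-variation measures $\bigl|D[u_n]_\gamma^{\rm ex}\bigr| \rightharpoonup^* \bigl|D[u]_\gamma^{\rm ex}\bigr|$ as Radon measures on $\bB_\Omega$ (this is part of the content of \cite[Theorem 3.88]{MR1857292}), and then pass to the limit in $\int_{\bB_\Omega}[\beta]^{\rm ex}\,d|D[u_n]_\gamma^{\rm ex}|$. Since $[\beta]^{\rm ex}\in H^1(\R^N)\cap L^\infty(\R^N)$ need not be continuous, I cannot test directly against it; instead I would approximate $[\beta]^{\rm ex}$ in $L^1\bigl(\bB_\Omega; |D[u]_\gamma^{\rm ex}|\bigr)$ (or uniformly, after mollification) by continuous functions, using that the total masses $|D[u_n]_\gamma^{\rm ex}|(\bB_\Omega)$ are uniformly bounded, and run the usual $3\varepsilon$-argument. \textbf{The main obstacle} is precisely this last point: justifying the limit exchange with the merely-$H^1$ (hence a priori discontinuous) weight $[\beta]^{\rm ex}$ against weakly-$*$ converging measures — one must exploit either the absolute continuity of $|D[u]_\gamma^{\rm ex}|$-integration against a good representative of $[\beta]^{\rm ex}$, or cite the more refined strict-continuity statement (as in \cite[Theorem 6.1]{MR2306643}, already used to obtain \eqref{02_[bt|Du|]_gm}) that handles exactly such weighted total variations. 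Modulo that, the argument is routine.
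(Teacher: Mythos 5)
Your reduction to the glued extension and your verification that $[u_n]_\gamma^{\rm ex} \to [u]_\gamma^{\rm ex}$ strictly in $BV(\bB_\Omega)$ are sound (modulo the first display, which should subtract the $n$-independent exterior term $\int_{\bB_\Omega \setminus \overline{\Omega}}[\beta]^{\rm ex}|\nabla[\gamma]^{\rm ex}|\,dx$ rather than claim equality; harmless, as you note). The ingredient you use there, namely continuity of the trace under strict convergence, is precisely what the paper's citation of \cite[Theorem 3.88]{MR1857292} is for; the paper gives no further proof of this Fact. The genuine gap is your last step. Strict convergence gives weak-$*$ convergence of the total-variation measures together with convergence of their masses, but $[\beta]^{\rm ex}$ is only quasi-continuous ($H^1\cap L^\infty$), and quasi-continuity does not give continuity at $|D[u]_\gamma^{\rm ex}|$-a.e.\ point, so the Portmanteau-type passage is not available; moreover the $3\varepsilon$/mollification argument you sketch does not close, because approximating $[\beta]^{\rm ex}$ in $L^1(|D[u]_\gamma^{\rm ex}|)$ (or in $L^1(\bB_\Omega)$) gives no uniform-in-$n$ control of $\int|[\beta]^{\rm ex}-\beta_k|\,d|D[u_n]_\gamma^{\rm ex}|$. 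Your fallback of citing a refined strict-continuity statement for weighted total variations is essentially citing the Fact itself.

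There is, however, an elementary repair that stays entirely inside the paper's toolkit and reflects how the paper argues in analogous situations (see the proofs of Key-Lemmas \ref{keyLem03-00} and \ref{keyLem04}): apply Fact \ref{Fact0} with $m=2$ to the complementary weights $\beta_1 := \tfrac12+\beta$ and $\beta_2 := \tfrac12+|\beta|_{L^\infty(\Omega)}-\beta$, both in $Y_0(\Omega)$. By the supremum representation \eqref{exp01} of Fact \ref{Fact3}, each functional $u \mapsto [\beta_k|Du|]_\gamma(\overline{\Omega})$ is lower semi-continuous along $u_n \to u$ in $L^2(\Omega)$, which gives $\varliminf_n [\beta_k|Du_n|]_\gamma(\overline{\Omega}) \geq [\beta_k|Du|]_\gamma(\overline{\Omega})$ for $k=1,2$; on the other hand the sum of the two equals $(1+|\beta|_{L^\infty(\Omega)})\,[\,|Du_n|\,]_\gamma(\overline{\Omega})$, i.e.\ a constant multiple of the unweighted quantity, which converges by the strict $BV(\Omega)$-convergence together with the trace continuity you already invoked (this is exactly where \cite[Theorem 3.88]{MR1857292} enters). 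Fact \ref{Fact0} then yields convergence of each summand, in particular $[\beta|Du_n|]_\gamma(\overline{\Omega}) \to [\beta|Du|]_\gamma(\overline{\Omega})$, which is the assertion. In short: your structural reduction is fine and uses the same key ingredient as the paper's citation, but the weighted limit passage as you propose it would fail, and it is avoidable by the complementary-weight trick rather than by any Reshetnyak/Portmanteau argument.
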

}
\begin{fact}\label{Fact3}
(cf. \cite[Theorem 6.1]{MR2306643} and \cite[Section 2]{MR3268865})
    If $ A \subset \R^N $ is bounded and open, and $ \beta \in H^1(\Omega) \cap L^\infty(\Omega) $ with $ \beta \geq 0 $ a.e. in $ \Omega $, then
\begin{equation}\label{exp01}
[\beta|Du|]_\gamma(A) = \sup \left\{ \begin{array}{l|l}
    \ds \int_A [u]_\gamma^\mathrm{ex} \, {\rm div} \, \varphi \, dx & \parbox{4.75cm}{
$ \varphi \in L^\infty(A)^N $, such that $ {\rm supp} \, \varphi $ is compact and $ |\varphi| \leq [\beta]^{\rm ex} $ a.e. in $ A $
}
\end{array} \right\},
\end{equation}
and in particular, if $ \log \beta \in L^\infty(\Omega) $, then
\begin{equation}\label{exp02}
[\beta|Du|]_\gamma(A) = \sup \left\{ \begin{array}{l|l}
    \ds \int_A [u]_\gamma^\mathrm{ex} \, {\rm div}(\beta \varphi) \, dx & \parbox{4.75cm}{
$ \varphi \in L^\infty(A)^N $, such that $ {\rm supp} \, \varphi $ is compact and $ |\varphi| \leq 1 $ a.e. in $ A $}
\end{array} \right\}.
\end{equation}
\end{fact}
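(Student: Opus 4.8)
The plan is to reduce the two identities \eqref{exp01} and \eqref{exp02} to the already-established relation \eqref{02_[bt|Du|]_gm}, together with the known duality representation of a weighted total variation. Throughout, set $ w := [u]_\gamma^{\rm ex} \in BV(\R^N) $ and $ \rho := [\beta]^{\rm ex} \in H^1(\R^N) \cap L^\infty(\R^N) $ (cf.\ Remark \ref{Rem.ex_hm}); since $ \beta \geq 0 $ a.e.\ in $ \Omega $, one may take $ \rho \geq 0 $ a.e.\ in $ \R^N $ --- if the extension does not automatically preserve the sign, replace $ \rho $ by $ \rho \vee 0 $, which still coincides with $ \beta $ on $ \Omega $ and hence leaves the measure \eqref{01_[bt|Du|]_gm} unchanged. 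With this notation, \eqref{02_[bt|Du|]_gm} reads $ [\beta|Du|]_\gamma(A) = \int_A \rho \, d|Dw| $ for every bounded open $ A \subset \R^N $; that is, the quantity on the left is precisely the $ \rho $-weighted total variation of $ w $ over $ A $.

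For \eqref{exp01} I would invoke the weighted-$ BV $ duality formula of \cite[Theorem 6.1]{MR2306643} (see also \cite[Section 2]{MR3268865}), which represents $ \int_A \rho \, d|Dw| $ as the supremum of $ \int_A w \, {\rm div}\,\varphi \, dx $ over compactly supported fields $ \varphi $ subject to $ |\varphi| \leq \rho $, where $ \int_A w \, {\rm div}\,\varphi \, dx $ is read as $ -\int_A \varphi \cdot dDw $ when $ \varphi $ is merely $ L^\infty $. If that reference states the identity only for $ \varphi \in C_c^1(A)^N $, the enlargement to the $ L^\infty $-class of \eqref{exp01} is a routine mollification: for an admissible $ \varphi $ the smoothings $ \varphi_\delta := \varphi * \zeta_\delta $ lie in $ C_c^1(A)^N $, satisfy $ |\varphi_\delta| \leq \rho * \zeta_\delta $, and after a harmless truncation enforcing $ |\varphi_\delta| \leq \rho $ one still has $ -\int_A \varphi_\delta \cdot dDw \to -\int_A \varphi \cdot dDw $ as $ \delta \downarrow 0 $. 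Together with the first paragraph, this gives \eqref{exp01}.

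For \eqref{exp02} I add the hypothesis $ \log\beta \in L^\infty(\Omega) $, which yields $ c_* \leq \beta \leq c_*^{-1} $ a.e.\ in $ \Omega $ for some $ c_* \in (0,1] $; accordingly $ \rho = [\beta]^{\rm ex} $ may be taken with $ c_* \leq \rho \leq c_*^{-1} $ a.e.\ in $ \R^N $ (post-composing the extension with $ s \mapsto (s \vee c_*) \wedge c_*^{-1} $, which again leaves \eqref{01_[bt|Du|]_gm} unchanged). Then \eqref{exp02} follows from \eqref{exp01} by the substitution $ \varphi \leftrightarrow \rho\varphi $. Indeed, if $ \varphi \in L^\infty(A)^N $ is compactly supported with $ |\varphi| \leq 1 $, then $ \rho\varphi \in H^1(A)^N \cap L^\infty(A)^N $ is compactly supported and satisfies $ |\rho\varphi| \leq \rho $ a.e., hence is admissible in \eqref{exp01}; since $ {\rm div}(\rho\varphi) $ is literally the same $ L^2 $-function in both integrals, \eqref{exp01} forces $ \int_A w \, {\rm div}(\rho\varphi) \, dx \leq [\beta|Du|]_\gamma(A) $, i.e.\ ``$ \leq $'' in \eqref{exp02}. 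For the reverse, by the mollification step the right-hand side of \eqref{exp01} is unchanged when the test fields are restricted to $ C_c^1(A)^N $; and any $ \varphi' \in C_c^1(A)^N $ with $ |\varphi'| \leq \rho $ can be written as $ \rho\varphi $ with $ \varphi := \rho^{-1}\varphi' $, which --- using $ \rho \geq c_* > 0 $ --- again lies in $ L^\infty(A)^N $, is compactly supported and has $ |\varphi| \leq 1 $; as $ {\rm div}(\rho\varphi) = {\rm div}\,\varphi' $, the right-hand side of \eqref{exp02} dominates that of \eqref{exp01}, i.e.\ ``$ \geq $''. This proves \eqref{exp02}, with ``$ \beta $'' in its right-hand side read as $ [\beta]^{\rm ex} $, which agrees with $ \beta $ on $ \Omega $.

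I expect the only genuine obstacle to be technical bookkeeping rather than substance: checking that the cited weighted-$ BV $ duality formula applies verbatim to the merely $ H^1 \cap L^\infty $ weight $ \rho = [\beta]^{\rm ex} $ (such formulas being often stated for continuous weights), and that the pointwise constraint $ |\varphi| \leq \rho $ survives the mollification up to a vanishing error; together with the routine but notation-heavy verification that the extension $ [\,\cdot\,]^{\rm ex} $ may be assumed nonnegative --- and bounded away from $ 0 $ once $ \log\beta \in L^\infty(\Omega) $ --- without altering the measure defined in \eqref{01_[bt|Du|]_gm}.
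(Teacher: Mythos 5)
Your core step is the same one the paper itself relies on: Fact \ref{Fact3} is not proved in the paper but imported from \cite[Theorem 6.1]{MR2306643} and \cite[Section 2]{MR3268865}, and your argument likewise reduces everything to \eqref{02_[bt|Du|]_gm} plus that cited weighted duality/relaxation formula, adding only reduction bookkeeping (sign of the extension, enlargement of the test class, the substitution $\varphi \leftrightarrow [\beta]^{\rm ex}\varphi$ for \eqref{exp02}, which is fine). So in substance the route coincides with the paper's.

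Two of your auxiliary claims, however, do not hold as stated. (i) Replacing $[\beta]^{\rm ex}$ by $[\beta]^{\rm ex}\vee 0$, or by a two-sided truncation when $\log\beta\in L^\infty(\Omega)$, does \emph{not} leave the measure unchanged: by \eqref{01_[bt|Du|]_gm} its restriction to $A\setminus\overline{\Omega}$ is $[\beta]^{\rm ex}\,|D[\gamma]^{\rm ex}|\,dx$, which depends on the values of the extension outside $\overline{\Omega}$, and the admissible class in \eqref{exp01} (the constraint $|\varphi|\leq[\beta]^{\rm ex}$ a.e. in $A$) changes as well; recall that Fact \ref{Fact3} concerns arbitrary bounded open $A\subset\R^N$, not only subsets of $\overline{\Omega}$. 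The remedy is that no modification is needed at all: the extension fixed in Remark \ref{Rem.ex_hm} (ex.2) already preserves $\beta\geq 0$ and the two-sided bounds coming from $\log\beta\in L^\infty(\Omega)$. (ii) The mollify-and-truncate step is not ``harmless''. Writing $w:=[u]_\gamma^{\rm ex}$, the expression $-\int_A\varphi\cdot dDw$ is not defined for a mere $L^\infty$ field against the singular part of $Dw$; the integrals in \eqref{exp01}--\eqref{exp02} must be read with $\mathrm{div}\,\varphi$ a function (consistently with the class $X_{\rm c}$ introduced right after Fact \ref{Fact3} and with the Anzellotti pairing of Fact \ref{Fact10} and Remark \ref{Prop.pairMeas}). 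Moreover, truncating $\varphi_\delta=\varphi*\zeta_\delta$ so as to enforce $|\varphi_\delta|\leq[\beta]^{\rm ex}$ destroys exactly the $C^1$ regularity needed to apply the smooth-field formula, while without truncation you only have $|\varphi_\delta|\leq[\beta]^{\rm ex}*\zeta_\delta$. A correct version of this reduction either invokes the pairing estimate together with the fact that $[\beta]^{\rm ex}*\zeta_\delta$ converges $\mathcal{H}^{N-1}$-a.e. (hence $|Dw|$-a.e.) to the precise representative, so that $\int([\beta]^{\rm ex}*\zeta_\delta)\,d|Dw|\to\int[\beta]^{{\rm ex}\,*}\,d|Dw|$ by dominated convergence, or simply appeals to the cited theorem in a form that already admits the $L^\infty$ test class. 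With these points repaired, your deduction of \eqref{exp02} from \eqref{exp01} via $\varphi\leftrightarrow[\beta]^{\rm ex}\varphi$, using $[\beta]^{\rm ex}\geq c_*>0$, goes through.
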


Now, we set:
\begin{equation*}
\begin{array}{c}
\left\{ \begin{array}{l}
\ds X_{\rm c}(\Omega) := \left\{ \begin{array}{l|l}
    \varphi \in L^{\infty}(\Omega)^{N} & {\rm div} \, \varphi \in L^{2}(\Omega) \mbox{ and $ {\rm supp} \, \varphi$ is compact in } \Omega
\end{array} \right\},
\\[1ex]
\ds Y_{0}(\Omega) := \left\{ \begin{array}{l|l}
\beta \in H^{1}(\Omega) \cap L^{\infty}(\Omega) &
\parbox{2.75cm}{$ \beta \ge 0 $ a.e. in $ \Omega $}
\end{array} \right\},
\\[1ex]
\ds Y_{\rm c}(\Omega) := \left\{ \begin{array}{l|l}
\beta \in H^{1}(\Omega) \cap L^{\infty}(\Omega) &
\parbox{2.75cm}{$ \log \beta \in L^\infty(\Omega) $}
\end{array} \right\}.
\end{array} \right.
\ \\[-2ex]
\end{array}
\end{equation*}

Besides, for every $ \beta \in Y_0(\Omega) $ and $ \gamma \in H^{\frac{1}{2}}(\Gamma) $, we define a functional {$\mathrm{Var}_{\gamma}^{\beta}$} on $L^{2}(\Omega)$, by letting
\begin{equation}\label{Phi_gm(bt.)}
    \ds u \in L^{2}(\Omega) \mapsto {\mathrm{Var}_{\gamma}^{\beta}(u)} := \left\{ \begin{array}{ll}
\multicolumn{2}{l}{\ds \int_{\overline{\Omega}} d [\beta |Du|]_\gamma = \int_\Omega [\beta]^* \, d |Du| +\int_{\Gamma} \beta |u -\gamma| \, d\Gamma,}
\\[2ex]
& \mbox{if $ u \in BV(\Omega) $,}
\\[1ex]
\infty, & \mbox{otherwise,}
\end{array} \right.
\end{equation}
where $ [{}\cdot{}]^* $ denotes the \emph{precise representative} of $ BV $-function (cf. \cite[Corollary 3.80]{MR1857292}, and Section 6.1 in Appendix).  
Note that the value of {$\mathrm{Var}_{\gamma}^{\beta}$} is determined, independently on the choice of the extensions $ [\beta]^{\rm ex} $ and $ [\gamma]^{\rm ex} $. Therefore, taking into account (\ref{01_[bt|Du|]_gm}) and (\ref{Phi_gm(bt.)}), it is inferred that the functional $\mathrm{Var}_{\gamma}^{\beta}$ is proper l.s.c. and convex on $ L^2(\Omega) $.
In addition, the following holds.

\begin{fact}\label{Fact4}
(cf. \cite{MR0102739,MR2139257,MR3751650}) If $ \beta \in Y_{\rm c}(\Omega) $, $ \gamma \in H^{1}(\Gamma) $ and $ u \in BV(\Omega) \cap L^2(\Omega) $, then there exists a sequence $ \{ u_i \}_{i = 1}^\infty \subset H^1(\Omega) $ such that
\begin{equation}\label{reg01}
\left\{ \hspace{-4ex} \parbox{11.75cm}{
\vspace{-1ex}
\begin{itemize}
\item[]$ u_i = \gamma $, a.e. on $ \Gamma $, for any $ i \in \N $,
\item[]$ u_i \to u $ in $ L^2(\Omega) $ and $ \ds \int_\Omega \beta |\nabla u_i| \, dx \to {\mathrm{Var}_{\gamma}^{\beta}(u)} $, as $ i \to \infty $.
\vspace{-1ex}
\end{itemize}
} \right.
\end{equation}
    Hence, the functional $ {\mathrm{Var}_{\gamma}^{\beta}} $ coincides with the lower semi-continuous envelope of the functional:
$$
\varphi \in L^2(\Omega) \mapsto \left\{ \begin{array}{ll}
        \multicolumn{2}{l}{\ds \int_\Omega \beta |\nabla \varphi| \, dx, \mbox{ \ if {$ \varphi \in H^{1}(\Omega) $} and $ \varphi = \gamma $, a.e. on $ \Gamma $,}}
\\[2ex]
\infty, & \mbox{otherwise,}
\end{array} \right.
$$
i.e. for any $ v \in L^2(\Omega) $, it holds that
$$
    {\mathrm{Var}_{\gamma}^{\beta}(v)} = \inf \left\{ \begin{array}{l|l}
\ds \liminf_{i \to \infty} \int_\Omega \beta |\nabla \varphi_i| \, dx & \parbox{7cm}{
        {$ \{ \varphi_i \}_{i = 1}^\infty \subset H^{1}(\Omega) $,} $ \varphi_i = \gamma $, a.e. on $ \Gamma $, for any $ i \in \N $, and $ \varphi_i \to v $ in $ L^2(\Omega) $ as $ i \to \infty $
}
\end{array} \right\}.
$$
\end{fact}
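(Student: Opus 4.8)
The assertion has two layers: (i) the construction of the approximating sequence $\{u_i\}$, and (ii) the identification of $\mathrm{Var}_\gamma^\beta$ with the lower semicontinuous envelope of the Dirichlet‑constrained functional $G$, where $G(\varphi):=\int_\Omega\beta|\nabla\varphi|\,dx$ for $\varphi\in H^1(\Omega)$ with $\varphi\trace{\Gamma}=\gamma$ and $G(\varphi):=+\infty$ otherwise. Layer (ii) is a formal consequence of (i): for $\varphi\in H^1(\Omega)$ with $\varphi\trace{\Gamma}=\gamma$ one has $[\varphi]^*=\varphi$ a.e., $|D\varphi|=|\nabla\varphi|\,\L^N$, and the boundary term in \eqref{Phi_gm(bt.)} vanishes, so $\mathrm{Var}_\gamma^\beta(\varphi)=G(\varphi)$; the already‑noted lower semicontinuity of $\mathrm{Var}_\gamma^\beta$ on $L^2(\Omega)$ then gives $\mathrm{Var}_\gamma^\beta(v)\le\varliminf_i G(\varphi_i)$ along every $L^2$‑convergent $\varphi_i\to v$, i.e. $\mathrm{Var}_\gamma^\beta$ lies below the envelope, while the recovery sequence of (i) supplies the matching upper bound and hence the displayed infimum formula (for $v\notin BV(\Omega)\cap L^2(\Omega)$ both sides are $+\infty$, by $BV$‑compactness). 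The same lower semicontinuity shows that in (i) it suffices to produce $\{u_i\}\subset H^1(\Omega)$ with $u_i\trace{\Gamma}=\gamma$, $u_i\to u$ in $L^2(\Omega)$ and $\varlimsup_i\int_\Omega\beta|\nabla u_i|\,dx\le\mathrm{Var}_\gamma^\beta(u)$. Finally, truncating $u$ by $(u\wedge M)\vee(-M)$ and using the weighted coarea formula for the interior term of \eqref{Phi_gm(bt.)} together with dominated convergence for its boundary term, one checks that $\mathrm{Var}_\gamma^\beta$ is continuous along the truncations, so a diagonal argument lets us assume $u\in L^\infty(\Omega)$.

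The recovery sequence is built in two steps. \emph{Interior step.} I would invoke the weighted, trace‑preserving strict approximation of $BV$ functions — this is the substantive input, and it is exactly what the cited works \cite{MR0102739,MR2139257,MR3751650} provide: there are $\tilde u_j\in C^\infty(\Omega)\cap W^{1,1}(\Omega)$ with $\|\tilde u_j\|_{L^\infty(\Omega)}\le\|u\|_{L^\infty(\Omega)}$, $\tilde u_j\to u$ in $L^2(\Omega)$, $\tilde u_j\trace{\Gamma}=u\trace{\Gamma}=:\tau$ in $L^1(\Gamma)$, $\int_\Omega\beta|\nabla\tilde u_j|\,dx\to\int_\Omega[\beta]^*\,d|Du|$ and $\int_\Omega|\nabla\tilde u_j|\,dx\to|Du|(\Omega)$; in particular $|\nabla\tilde u_j|\,\L^N\rightharpoonup|Du|$ weakly‑$*$ in $\Omega$. \emph{Boundary step.} By $(\bm{\omega}1)$–$(\bm{\omega}2)$ the inward normal map $(y,s)\mapsto y-s\,n_\Gamma(y)$ is a $C^\infty$ diffeomorphism of $\Gamma\times[0,\sigma_0)$ onto the collar $\overline{\Gamma_{\sigma_0}}$, $\Gamma_\sigma:=\{x\in\Omega:\mathrm{dist}_\Gamma(x)<\sigma\}$; since the relatively closed sets $\{0<\mathrm{dist}_\Gamma\le\sigma\}$ decrease to $\emptyset$ in $\Omega$, the weak‑$*$ convergence gives $\varlimsup_\sigma\varlimsup_j\int_{\Gamma_\sigma}|\nabla\tilde u_j|\,dx=0$, so a diagonal (plus Fubini/mean‑value) argument furnishes $\sigma_j\downarrow0$ with $\int_{\Gamma_{\sigma_j}}|\nabla\tilde u_j|\,dx\to0$ and $\sigma_j\int_{\{\mathrm{dist}_\Gamma=\sigma_j\}}|\nabla\tilde u_j|\,d\mathcal{H}^{N-1}\to0$. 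Define $u_j$ to equal $\tilde u_j$ on $\Omega\setminus\Gamma_{\sigma_j}$ and, inside the collar, the function affine in the normal variable $s$ joining $\gamma(y)$ at $s=0$ to $\tilde u_j(y-\sigma_j n_\Gamma(y))$ at $s=\sigma_j$.

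Then $u_j\trace{\Gamma}=\gamma$ by construction, and $u_j\in H^1(\Omega)$ since its interior part is smooth, its collar part is an affine interpolation built from $\gamma\in H^1(\Gamma)$ and a smooth surface trace, and the two pieces match across $\{\mathrm{dist}_\Gamma=\sigma_j\}$; because $\gamma\in L^2(\Gamma)$, $\|\tilde u_j\|_{L^\infty}$ is bounded, and the collar has volume $O(\sigma_j)$, we get $u_j-\tilde u_j\to0$ in $L^2(\Omega)$, hence $u_j\to u$ in $L^2(\Omega)$. For the energy, $\int_\Omega\beta|\nabla u_j|\,dx=\int_{\Omega\setminus\Gamma_{\sigma_j}}\beta|\nabla\tilde u_j|\,dx+\int_{\Gamma_{\sigma_j}}\beta|\nabla u_j|\,dx$; the first term tends to $\int_\Omega[\beta]^*\,d|Du|$ because $\int_{\Gamma_{\sigma_j}}\beta|\nabla\tilde u_j|\le\|\beta\|_{L^\infty}\int_{\Gamma_{\sigma_j}}|\nabla\tilde u_j|\to0$. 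In the collar, up to the Jacobian of the normal map (which tends to $1$), $|\nabla u_j|\lesssim\sigma_j^{-1}\,|\tilde u_j(\,\cdot\,-\sigma_j n_\Gamma)-\gamma|+|\nabla_\Gamma(\tilde u_j(\,\cdot\,-\sigma_j n_\Gamma))|+|\nabla_\Gamma\gamma|$; integrating $ds$ over $(0,\sigma_j)$, the last two terms contribute at most $\sigma_j\int_{\{\mathrm{dist}_\Gamma=\sigma_j\}}|\nabla\tilde u_j|\,d\mathcal{H}^{N-1}+O(\sigma_j)\,\|\nabla_\Gamma\gamma\|_{L^1(\Gamma)}\to0$, while the first contributes $\int_\Gamma\overline\beta_j\,|\tilde u_j(\,\cdot\,-\sigma_j n_\Gamma)-\gamma|\,d\Gamma$ with $\overline\beta_j$ the normal average of $\beta$ over the collar. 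Here $\overline\beta_j\to\beta\trace{\Gamma}$ in $L^2(\Gamma)$ by continuity of the trace on $H^1(\Omega)$, and $\tilde u_j(\,\cdot\,-\sigma_j n_\Gamma)\to\tau$ in $L^1(\Gamma)$ (its distance to $\tilde u_j\trace{\Gamma}=\tau$ is $\lesssim\int_{\Gamma_{\sigma_j}}|\nabla\tilde u_j|\to0$) while staying bounded in $L^2(\Gamma)$ thanks to $u\in L^\infty$, so this term tends to $\int_\Gamma\beta\,|\tau-\gamma|\,d\Gamma$. Collecting, $\varlimsup_j\int_\Omega\beta|\nabla u_j|\,dx\le\int_\Omega[\beta]^*\,d|Du|+\int_\Gamma\beta\,|u\trace{\Gamma}-\gamma|\,d\Gamma=\mathrm{Var}_\gamma^\beta(u)$, completing the construction.

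The genuine obstacle is the weighted interior strict approximation invoked above: since $\beta\in H^1(\Omega)\cap L^\infty(\Omega)$ with $\log\beta\in L^\infty(\Omega)$ need not be continuous, Reshetnyak's continuity theorem does not directly yield $\int_\Omega\beta|\nabla\tilde u_j|\,dx\to\int_\Omega[\beta]^*\,d|Du|$. The way around it is to exploit that $[\beta]^*$ is the precise, hence $\mathcal{H}^{N-1}$‑a.e.\ defined (quasicontinuous), representative and that $|Du|\ll\mathcal{H}^{N-1}$ on $\Omega$: one mollifies locally, $\tilde u_j:=\sum_k\rho_{\delta_{j,k}}*(\phi_k u)$ for a locally finite partition of unity $\{\phi_k\}$ subordinate to a fine ball cover of $\Omega$, with radii $\delta_{j,k}$ suitably chosen so that the commutator $\sum_k\rho_{\delta_{j,k}}*(u\,\nabla\phi_k)$ is negligible in $L^1(\Omega)$ (it would vanish were the mollifications equal, since $\sum_k\nabla\phi_k=0$), while $\int_\Omega\beta\,\bigl|\sum_k\rho_{\delta_{j,k}}*(\phi_k\,Du)\bigr|\,dx\le\sum_k\int_\Omega(\rho_{\delta_{j,k}}*\beta)\,\phi_k\,d|Du|\to\int_\Omega[\beta]^*\,d|Du|$ by dominated convergence ($\rho_\delta*\beta\to[\beta]^*$ pointwise $|Du|$‑a.e.\ and $\le\|\beta\|_{L^\infty}$); the balls meeting $\Gamma$ are treated by the standard inward translation before mollification so that the trace is preserved, and the same computation with weight $\equiv1$ delivers the ordinary strict convergence. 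All of this is precisely the content of \cite{MR0102739,MR2139257,MR3751650}, which I would cite rather than reprove; granting it, the remaining steps above are routine.
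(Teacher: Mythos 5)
Your proposal is correct and follows essentially the same route as the paper, which states this as a known Fact with citations and, in Remark \ref{Rem.Fact4}, sketches exactly your two-step scheme: defer the weighted, trace-preserving strict approximation in the interior to \cite{MR0102739,MR2139257,MR3751650} (and the relaxation result of \cite{MR2306643}), and adjust the boundary value by a Gagliardo/Moll-type boundary-layer sequence, with $\gamma \in H^1(\Gamma)$ guaranteeing that the auxiliary layer -- your collar interpolation, the paper's $\{u_i^\perp\}$ -- can be taken in $H^1(\Omega)$. Your collar construction and the lower-semicontinuity argument for the envelope identity are consistent with this, so no gap beyond what the paper itself delegates to the references.
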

\begin{rem}\label{Rem.Fact4}
    In the construction of the sequence $ \{ u_i \}_{i = 1}^\infty \subset H^1(\Omega) $ as in \eqref{reg01}, the key point is to prepare an auxiliary sequence $ \{ u_i^\perp \}_{i = 1}^\infty \cap W^{1, 1}(\Omega) \cap L^2(\Omega) $ such that:
\begin{equation*}
\left\{ \hspace{-4ex} \parbox{12.75cm}{
\vspace{-1ex}
\begin{itemize}
\item[]$ u_i^\perp = \gamma $, a.e. on $ \Gamma $, for any $ i \in \N $,
\item[]$ u_i^\perp \to 0 $ in $ L^2(\Omega) $ and $ \ds \int_\Omega \beta |\nabla u_i^\perp| \, dx \to \int_\Gamma \beta|u| \, d \Gamma $, as $ i \to \infty $,
\vspace{-1ex}
\end{itemize}
} \right.
\end{equation*}
    and such a sequence is easily obtained by referring to some appropriate general theories, e.g. \cite{MR0102739,MR2139257}. In addition, if we suppose $ \gamma \in H^1(\Gamma) $, then we can take the auxiliary sequence to satisfy $ \{ u_i^\perp \}_{i = 1}^\infty \subset H^1(\Omega) $ (cf. \cite[Lemma 2]{MR3751650}), and thereby, we can also suppose $ \{ u_i \}_{i = 1}^\infty \subset H^1(\Omega) $ for the sequence as in \eqref{reg01}.
\end{rem}

\begin{fact}\label{Fact_Gm-conv}
    (cf. \cite[Remark 12 (Fact 6)]{MR4352617}) Let $ \beta \in W_c(\Omega) $, $ \{ \beta_n \}_{n = 1}^\infty \subset W_0(\Omega) $, $ \gamma \in H^{\frac{1}{2}}(\Gamma) $, and $ \{ \gamma_n \}_{n = 1}^\infty \subset H^{\frac{1}{2}}(\Gamma) $ be such that:
        \begin{equation}\label{beta^circ}
            \begin{cases}
                \beta_n \to \beta \mbox{ in $ L^2(\Omega) $, weakly in $ H^1(\Omega) $,} 
                \\
                \qquad \mbox{and weakly-$*$ in $ L^\infty(\Omega) $,}
                \\[1ex]
                \gamma_n \to \gamma \mbox{ in $ H^{\frac{1}{2}}(\Gamma) $,}
            \end{cases}
             \mbox{as $ n \to \infty $.}
        \end{equation}
Then the sequence of convex functions $ \{ \mathrm{Var}_{\gamma_n}^{\beta_n}({}\cdot{}) \}_{n = 1}^\infty $ $ \Gamma $-converges to the convex function $ \mathrm{Var}_\gamma^{\beta}({}\cdot{}) $ on $ L^2(\Omega) $, as $ n \to \infty $.
\end{fact}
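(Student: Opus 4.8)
The plan is to check, directly from Definition~\ref{Def.Gamma}, the lower-bound condition and the optimality condition for the convex functionals $\mathrm{Var}_{\gamma_n}^{\beta_n}$ on $L^2(\Omega)$. The principal inputs will be: the dual representations \eqref{exp01}--\eqref{exp02} of Fact~\ref{Fact3} together with the identification \eqref{02_[bt|Du|]_gm}; the boundedness (hence weak continuity) of the extension operators $\mathfrak{ex}_\Omega$ and $\mathfrak{hm}_\Gamma$, so that $\gamma_n \to \gamma$ in $H^{\frac12}(\Gamma)$ forces $[\gamma_n]^\mathrm{ex} \to [\gamma]^\mathrm{ex}$ strongly in $H^1(\R^N)$ while $[\beta_n]^\mathrm{ex} \to [\beta]^\mathrm{ex}$ weakly in $H^1$ and strongly in $L^2$ on balls; the relaxation property of Fact~\ref{Fact4}; and the elementary bound $\bigl| \mathrm{Var}_{\gamma'}^{\beta}(u) -\mathrm{Var}_{\gamma''}^{\beta}(u) \bigr| \leq |\beta|_{L^\infty(\Omega)} \, |\gamma' -\gamma''|_{L^1(\Gamma)}$, which makes $\mathrm{Var}_\gamma^\beta$ Lipschitz in $\gamma \in H^{\frac12}(\Gamma)$, uniformly in $u$.

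For the lower-bound condition I take $u_n \to u$ in $L^2(\Omega)$ and may assume $\varliminf_n \mathrm{Var}_{\gamma_n}^{\beta_n}(u_n) < \infty$. From the $L^2$-convergence of $u_n$ and that of the extended boundary data one gets $[u_n]_{\gamma_n}^\mathrm{ex} \to [u]_\gamma^\mathrm{ex}$ strongly in $L^2(\bB_\Omega)$. For a fixed $\psi \in C^1_c(\bB_\Omega)^N$ with $|\psi| \leq 1$, the field $\varphi_n := [\beta_n]^\mathrm{ex}\psi$ is an admissible competitor in \eqref{exp01} for $[\beta_n|Du_n|]_{\gamma_n}(\bB_\Omega)$, and since $\mathrm{div}([\beta_n]^\mathrm{ex}\psi) = \psi \cdot \nabla[\beta_n]^\mathrm{ex} +[\beta_n]^\mathrm{ex}\,\mathrm{div}\,\psi \to \mathrm{div}([\beta]^\mathrm{ex}\psi)$ weakly in $L^2$ while $[u_n]_{\gamma_n}^\mathrm{ex} \to [u]_\gamma^\mathrm{ex}$ strongly in $L^2$, a weak--strong passage to the limit gives $\varliminf_n [\beta_n|Du_n|]_{\gamma_n}(\bB_\Omega) \geq \int_{\bB_\Omega} [u]_\gamma^\mathrm{ex}\,\mathrm{div}([\beta]^\mathrm{ex}\psi)\,dx$; taking the supremum over $\psi$ and using \eqref{exp02} for the limit functional (legitimate because $\log\beta \in L^\infty(\Omega)$) yields $\varliminf_n [\beta_n|Du_n|]_{\gamma_n}(\bB_\Omega) \geq [\beta|Du|]_\gamma(\bB_\Omega)$. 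Finally, by \eqref{01_[bt|Du|]_gm} one has $[\beta_n|Du_n|]_{\gamma_n}(\bB_\Omega) = \mathrm{Var}_{\gamma_n}^{\beta_n}(u_n) +\int_{\bB_\Omega \setminus \overline\Omega}[\beta_n]^\mathrm{ex}|\nabla[\gamma_n]^\mathrm{ex}|\,dx$, and likewise for the limit; the exterior integrals converge (weak $L^2$ against strong $L^2$), so subtracting their common limit gives $\varliminf_n \mathrm{Var}_{\gamma_n}^{\beta_n}(u_n) \geq \mathrm{Var}_\gamma^\beta(u)$. If $u \notin BV(\Omega)$ the same argument applies and forces $\varliminf_n \mathrm{Var}_{\gamma_n}^{\beta_n}(u_n) = \infty = \mathrm{Var}_\gamma^\beta(u)$.

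For the optimality condition it is enough to build, for each $u \in D(\mathrm{Var}_\gamma^\beta) = BV(\Omega) \cap L^2(\Omega)$, some $u_n \to u$ in $L^2(\Omega)$ with $\varlimsup_n \mathrm{Var}_{\gamma_n}^{\beta_n}(u_n) \leq \mathrm{Var}_\gamma^\beta(u)$, since the reverse inequality for that very sequence is supplied by the first part. When $u \in H^1(\Omega)$ I set $\ell_n := \mathfrak{hm}_\Gamma(\gamma_n -\gamma) \in H^1(\Omega)$, so $|\ell_n|_{H^1(\Omega)} \to 0$, and $u_n := u +\ell_n$, which satisfies $u_n\trace{\Gamma} -\gamma_n = u\trace{\Gamma} -\gamma$; hence $\mathrm{Var}_{\gamma_n}^{\beta_n}(u_n) = \int_\Omega \beta_n|\nabla u +\nabla\ell_n|\,dx +\int_\Gamma \beta_n|u\trace{\Gamma} -\gamma|\,d\Gamma$, and letting $n \to \infty$ with $\nabla\ell_n \to 0$ in $L^2(\Omega)$, $\beta_n \to \beta$ weakly in $L^2(\Omega)$, and $\beta_n\trace{\Gamma} \to \beta\trace{\Gamma}$ strongly in $L^2(\Gamma)$ (compactness of the trace map) yields $\mathrm{Var}_{\gamma_n}^{\beta_n}(u_n) \to \mathrm{Var}_\gamma^\beta(u)$. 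For a general $u \in BV(\Omega) \cap L^2(\Omega)$ I first approximate $\gamma$ in $H^{\frac12}(\Gamma)$ by $\gamma^{(k)} \in H^1(\Gamma)$; Fact~\ref{Fact4} (applicable since $\log\beta \in L^\infty(\Omega)$ and $\gamma^{(k)} \in H^1(\Gamma)$) furnishes $w_j^{(k)} \in H^1(\Omega)$ with $w_j^{(k)}\trace{\Gamma} = \gamma^{(k)}$, $w_j^{(k)} \to u$ in $L^2(\Omega)$ and $\int_\Omega \beta|\nabla w_j^{(k)}|\,dx \to \mathrm{Var}_{\gamma^{(k)}}^{\beta}(u)$ as $j \to \infty$; since $\mathrm{Var}_{\gamma^{(k)}}^{\beta}(u) \to \mathrm{Var}_\gamma^\beta(u)$ by the Lipschitz bound, a diagonal choice produces $w_m \in H^1(\Omega)$ with $w_m \to u$ in $L^2(\Omega)$ and $\mathrm{Var}_\gamma^\beta(w_m) \to \mathrm{Var}_\gamma^\beta(u)$. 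A second diagonalization combining this with the $H^1$-case already treated yields the desired recovery sequence.

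The step I expect to be the main obstacle is the lower-bound condition. Because the weights $\beta_n$ are only nonnegative and carry no uniform lower bound, $\{u_n\}$ need not be bounded in $BV(\Omega)$, so one cannot pass to a weakly-$*$ convergent subsequence in $BV$; the argument is therefore channelled entirely through the dual representation \eqref{exp01}, and the delicate point is the simultaneous passage to the limit in $\int_{\bB_\Omega} [u_n]_{\gamma_n}^\mathrm{ex}\,\mathrm{div}([\beta_n]^\mathrm{ex}\psi)\,dx$, which rests on coupling the weak $H^1$-convergence of the extended weights with the strong $L^2$-convergence of the extended states, together with the separate control of the exterior correction terms $\int_{\bB_\Omega \setminus \overline\Omega}[\beta_n]^\mathrm{ex}|\nabla[\gamma_n]^\mathrm{ex}|\,dx$. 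A secondary nuisance is the mismatch between the hypothesis $\gamma \in H^{\frac12}(\Gamma)$ and the requirement $\gamma \in H^1(\Gamma)$ in Fact~\ref{Fact4}, which is exactly why the preliminary smoothing of the boundary datum and the double diagonal argument are needed in the recovery step.
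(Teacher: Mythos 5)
The paper itself does not prove this Fact: it is imported directly from \cite[Remark 12 (Fact 6)]{MR4352617}, so there is no internal argument to compare against. Your proof is, as far as I can check, sound, and it runs on exactly the machinery this paper deploys for its time-dependent analogues: your liminf step (dual representation of $[\beta_n|Du_n|]_{\gamma_n}(\bB_\Omega)$, the competitor $[\beta_n]^{\rm ex}\psi$, weak--strong pairing of $\mathrm{div}([\beta_n]^{\rm ex}\psi)$ against $[u_n]^{\rm ex}_{\gamma_n}$, separate control of the exterior term on $\bB_\Omega\setminus\overline{\Omega}$) is the mechanism of Key-Lemmas \ref{keyLem03-00} and \ref{keyLem03}, while your recovery step (harmonic lift $\mathfrak{hm}_\Gamma(\gamma_n-\gamma)$ to match the moving boundary datum, Fact \ref{Fact4} after an $H^1(\Gamma)$-smoothing of $\gamma$, double diagonalization) parallels Key-Lemma \ref{keyLem02} and the optimality part of Key-Lemma \ref{keyLem05}.

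Two details should be tightened. First, you invoke \eqref{exp02} while testing only with $\psi\in C^1_c(\bB_\Omega;\R^N)$, $|\psi|\le 1$, whereas \eqref{exp02} is written for $L^\infty$ fields; you need the representation with smooth fields (legitimate here since $\log\beta\in L^\infty(\Omega)$ keeps $[\beta]^{\rm ex}$ bounded away from zero, and it is the form the paper itself uses in computing $A_1$ in Key-Lemma \ref{keyLem03-00}), so state and use that version explicitly. Second, the case $u\notin BV(\Omega)$ is asserted rather than argued: finiteness of $\sup_\psi\int_{\bB_\Omega}[u]^{\rm ex}_\gamma\,\mathrm{div}([\beta]^{\rm ex}\psi)\,dx$ must be shown to force $[u]^{\rm ex}_\gamma\in BV(\bB_\Omega)$, which uses $[\beta]^{\rm ex}\ge\delta>0$ (split off $\int[u]^{\rm ex}_\gamma\,\nabla[\beta]^{\rm ex}\cdot\psi\,dx$, conclude $[\beta]^{\rm ex}[u]^{\rm ex}_\gamma\in BV(\bB_\Omega)$, then divide by $[\beta]^{\rm ex}$); routine, but it should be written out.
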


\section{Statements of Main Theorems}

We start with prescribing the assumptions in this study.
\begin{description}
\item[(A1)]
$ \bm{g} = [g, g_\Gamma] : \R \longrightarrow \R^2 $ is a Lipschitz continuous function, such that:
  \begin{equation*}
    \left\{\begin{aligned}
& g(0)\le 0, \mbox{ and } g(1) \ge 0,
\\
& g_\Gamma(0)\le 0, \mbox{ and } g_\Gamma(1) \ge 0.
\end{aligned}
\right.
\end{equation*}
\KS{Additionally, there exist nonnegative primitives $\hat{g} = [\hat{g}, \hat{g}_\Gamma] : \R \longrightarrow [0,\infty)$ and $\hat{g}_\Gamma:\R \longrightarrow [0,\infty)$ of $ g $ and $ g_\Gamma $, respectively.}

    \item[(A2)]$ A_0 : \R^2 \longrightarrow \R^{2 \times 2} $ is a locally Lipschitz function, such that for any $ \bm{\eta} = [\tilde{\eta}, \tilde{\eta}_\Gamma] \in \R^2 $, the value $ A_0(\bm{\tilde{\eta}}) = A_0(\tilde{\eta}, \tilde{\eta}_\Gamma) \in \R^{2 \times 2} $ is defined by a diagonal matrix:
        \begin{align}
            & A_0(\bm{\tilde{\eta}}) = \left[ \begin{array}{cc} 
                \alpha_{0}(\bm{\tilde{\eta}}) & 0
                \\
                0 & \alpha_{\Gamma, 0}(\bm{\tilde{\eta}})
            \end{array} \right]  = \left[ \begin{array}{cc} 
                \alpha_{0}({\tilde{\eta}}, {\tilde{\eta}_\Gamma}) & 0
                \\
                0 & \alpha_{\Gamma, 0}({\tilde{\eta}}, {\tilde{\eta}_\Gamma})
            \end{array} \right]  \in \R^{2 \times 2},
        \end{align}
        with use of fixed functions $\alpha_0 : \R^2 \longrightarrow (0, \infty) $, and $ \alpha_{\Gamma, 0} : \R^2 \longrightarrow (0, \infty) $.

\item[(A3)]$\alpha :\R \longrightarrow (0,\infty)$ is a $C^2$-function such that $\alpha'(0)=0$,  and $\alpha''\ge0$ on $\R$. 

\item[(A4)]$ \delta_\alpha := \inf \bigl( \alpha(\R) \cup \alpha_0(\R) \cup \alpha_{\Gamma, 0}(\R) \bigr)  > 0 $
\item[(A5)]
    $ \bm{\eta}_0 = [\eta_0,\eta_{\Gamma,0}] \in V_\varepsilon \cap \Lambda_0^1 $, and $ \bm{\theta}_0 = [\theta_0,\theta_{\Gamma,0}] \in  W \cap \Lambda_{\KS{r_0}}^{\KS{r_1}} $ for some fixed constants $ -\infty < \KS{r_0} \leq \KS{r_1} < \infty $.
\end{description}
\begin{rem}\label{KS-Rem.freeEnergy}
For any \KS{$ \beta \in W_0(\Omega)$,} we define a functional $ \Phi_0(\beta;{}\cdot{}) : H \longrightarrow [0, \infty] $, by letting:
\KS{
\begin{align}
    \Phi_0 & : \bm{\theta} = [\theta, \theta_\Gamma] \in H \mapsto  \Phi_0(\beta; \bm{\theta}) = \Phi_0(\beta; \theta, \theta_\Gamma)  := \mathrm{Var}_{\theta_\Gamma}^{\beta}(\theta) +\Psi_\Gamma(\kappa_\Gamma \theta_\Gamma)
\\[1ex]
    &= \left\{\begin{array}{ll}
        \multicolumn{2}{l}{\ds \int_{\overline{\Omega}} d \bigl[ \beta |D \theta| \bigr]_{\theta_\Gamma} +\frac{\kappa_\Gamma^2}{2} \int_\Gamma |\nabla_\Gamma \theta_\Gamma|^2 \,d\Gamma, \mbox{if $ [\theta,\theta_\Gamma] \in W $,}}
\\[2ex]
\infty, & \mbox{ otherwise.}
\end{array}\right.
\label{Phi_ep}
\end{align}
}
Then, on account of \eqref{Phi_gm(bt.)}, (A1)--(A5), and Fact \ref{Fact_Gm-conv}, it is observed that $ \Phi(\beta; \cdot) $, for each $ \beta \in W_0(\Omega) $, is proper l.s.c. and convex on $ H $. Hence, we can provide the rigorous definition of free-energy $ \mathscr{F}_\varepsilon $, as in \eqref{Free-Ener}, as follows:
\KS{
\begin{align*}
        \mathscr{F}_{\varepsilon} & : [\bm{\eta}, \bm{\theta}] = [\eta, \eta_\Gamma, \theta, \theta_\Gamma] \in [H]^2 \mapsto \mathscr{F}_{\varepsilon}(\eta, \eta_\Gamma, \theta,\theta_\Gamma) 
        \\[0.5ex]
        & := \Psi_{\Omega, \Gamma}(\kappa \eta, \varepsilon \eta_\Gamma) +G(\bm{\eta}) +\Phi_0(\alpha(\eta); \bm{\theta}) \in [0, \infty],
\end{align*}
where 
\begin{align*}
    G: \bm{\eta} = [\eta, \eta_\Gamma] \in H ~& \mapsto  G(\bm{\eta}) = G(\eta, \eta_\Gamma) := \int_\Omega \hat{g}(\eta)\,dx + \int_\Gamma \hat{g}_\Gamma(\eta_\Gamma)\,dx \in [0, \infty].
\end{align*}
}
\end{rem}
\vspace{-2ex}

On this basis of assumptions, we define the solution to (KWC)$_{\varepsilon,\delta}$, for any $\varepsilon,\delta\ge0$.
\begin{defn}[Definition of solution]\label{Def.sol} Let us fix any  $ \varepsilon \geq 0 $. Then, a quartet of functions $ [\bm{\eta}, \bm{\theta}] = [\eta, \eta_\Gamma, \theta, \theta_\Gamma] \in L^2(0, T; [H]^2)$, with $ \bm{\eta} = [\eta,\eta_\Gamma] \in L^2(0, T; H)$ and $ \bm{\theta} = [\theta, \theta_\Gamma] \in L^2(0,T;H)$, is called solution to (KWC)$_{\varepsilon}$, iff. the following items hold.
\begin{description}
    \item[(S0)] $\bm{\eta}=[\eta,\eta_\Gamma] \in W^{1, 2}(0, T; H) \cap L^\infty(0, T; V_\varepsilon)$, $\bm{\theta} = [\theta,\theta_\Gamma] \in W^{1, 2}(0, T; H)$, $ |D\theta(\cdot)|(\Omega) \in L^\infty(0, T) $, and $ \theta_\Gamma \in L^\infty(0, T; H^1(\Gamma)) $, and
\begin{align}\label{S1}
    & \begin{cases}
        [\bm{\eta}(t), \bm{\theta}(t)] \in \Lambda_0^1 \times \Lambda_{\KS{r_0}}^{\KS{r_1}}, ~ \mbox{for a.e. $ t \in (0, T) $.}
        \\[1ex]
        [\bm{\eta}(0), \bm{\theta}(0)] = [\bm{\eta}_0, \bm{\theta}_0] \mbox{ in $ [H]^2 $. }
    \end{cases}
\end{align}
        \item[(S1)]$\bm{\eta} = [\eta,\eta_\Gamma]$ solves the following variational equality:
\begin{align}
    & \bigl( \partial_t \bm{\eta}(t) + \bm{g}(\bm{\eta}(t)), \bm{\varphi} \bigr)_H +\kappa^2 \bigl( \nabla \eta(t), \nabla \varphi \bigr)_{[L^2(\Omega)]^N} +\bigl( \KS{\nabla_\Gamma (\varepsilon\eta_\Gamma)(t),} \nabla_\Gamma(\varepsilon \varphi_\Gamma) \bigr)_{[L^2(\Gamma)]^N}
    \\[1ex]
    & \hspace{18ex}+\int_{\overline{\Omega}} d\bigl[ \varphi \alpha'(\eta(t)) |D \theta(t)| \bigr]_{\theta_\Gamma(t)} = 0,
    \\[1ex]
    & \hspace{5ex} \mbox{for any $ \bm{\varphi} = [\varphi,\varphi_\Gamma] \in V_\varepsilon \cap \KS{L^\infty(\Omega, \Gamma)} $, a.e. $ t \in (0,T)$.}
\end{align}
        \item[(S2)]$ \bm{\theta} = [\theta,\theta_\Gamma] $ solves the following variational inequality:
\begin{align}
    & \bigl( A_0(\bm{\eta}(t)) \partial_t \bm{\theta}(t), \bm{\psi} \bigr)_H +\kappa_\Gamma^2 \big( \nabla_\Gamma \theta_\Gamma(t), \nabla_\Gamma (\theta_\Gamma(t) - \psi_\Gamma) \bigr)_{[L^2(\Gamma)]^N} 
    \\[1ex]
    & \qquad + \int_{\overline{\Omega}} d \bigl[ \alpha(\eta(t))|D \theta(t)| \bigr]_{\theta_\Gamma(t)} \le \int_{\overline{\Omega}} d \bigl[ \alpha(\eta(t))|D \psi| \bigr]_{\psi_\Gamma}, 
    \\[1ex]
    & \hspace{9ex}\mbox{ for any $\bm{\psi} = [\psi,\psi_\Gamma] \in W $, a.e. $t \in (0,T)$.}
\end{align}
\end{description}
\end{defn}

Based on these, our Main Theorems are stated as follows.
    \begin{mTh}[Existence of solution with energy-disspation]\label{mTh.Sol} Under (A0)--(A5), the system (KWC)$_{\varepsilon}$ admits at least one solution $[\bm{\eta}, \bm{\theta}] = [\eta,\eta_\Gamma,\theta,\theta_\Gamma] \in L^2(0,T;[H]^2)$, such that:
        \begin{description}
            \item[(A)]the function $ t \in [0, T] \longrightarrow \mathscr{F}_\varepsilon(\bm{\eta}(t), \bm{\theta}(t)) = \mathscr{F}_\varepsilon(\eta(t), \eta_\Gamma(t), \theta(t), \theta_\Gamma(t)) \in [0, \infty) $ is right continuous,
            \item[(B)]$ [\bm{\eta}, \bm{\theta}] =  [\eta, \eta_\Gamma, \theta, \theta_\Gamma] $ reproduces the following \emph{energy-dissipation in time}
    \begin{align}
        & \int_s^t \bigl| \partial_t \bm{\eta}(t) \bigr|_H^2 \, d \varsigma +\int_s^t \bigl| A_0(\bm{\eta}(\varsigma))^{\frac{1}{2}} \partial_t \bm{\theta}(\varsigma) \bigr|_H^2 \, d \varsigma +\mathscr{F}_\varepsilon(\bm{\eta}(t),  \bm{\theta}(t)) \leq \mathscr{F}_\varepsilon(\bm{\eta}(s),  \bm{\theta}(s))
        \\[1ex]
        & \hspace{30ex}\mbox{for all $ 0 \leq s \leq t \leq T $.}
    \end{align}

        \end{description}
\end{mTh}
\section{Key-Lemmas}
In this Section, we prove several Key-Lemmas that are vital for our Main Theorems.
    We begin by setting a class of relaxed convex functions. For every $\delta>0$, and $ 0 \leq \beta \in L^2(\Omega)  $, let us define:
\begin{align}\label{DefOfPhi_delta}
    \Phi_{\delta} & : \KS{\bm{\theta}} = [\theta,\theta_\Gamma] \in H \mapsto \Phi_{\delta}(\beta; \KS{\bm{\theta}}) = \Phi_{\delta}(\beta; \theta,\theta_\Gamma)
\\[2ex]
    &:= \left\{\begin{array}{cl}
    \multicolumn{2}{l}{\ds  \int_\Omega \left(\beta f_\delta(\nabla\theta)+\frac{\delta^2}{2}|\nabla\theta|^2\right)\,dx +\frac{\kappa_\Gamma^2}{2}\int_\Gamma |\nabla_\Gamma \theta_\Gamma|^2\,d\Gamma,}
    \\[2.5ex]
        & \mbox{ if $ [\theta, \theta_\Gamma] \in V_\delta$,}
\\[1.5ex]
\infty, & \mbox{ otherwise,}
\end{array}\right.
\end{align}
with the use of the following relaxation of Euclidean norm:
\begin{equation}\label{f_d}
f_\delta: \omega \in \R^N \mapsto \gamma_\delta(\omega):=\sqrt{\delta^2 + |\omega|^2} -\delta \in [0,\infty).
\end{equation}
\begin{rem}\label{Rem.rx-subDiff}
    For every $ \delta > 0 $ and $ 0 \leq \beta \in L^2(\Omega) $, it is easily checked that the functional $ \Phi_\delta(\beta;{}\cdot{}) $ is proper l.s.c. and convex on $ H $. Additionally, we note that $ \Phi_\delta(\beta;{}\cdot{}) $ is a modified version of the convex energy, dealt with \cite{MR3661429}. So, referring to \cite[Key-Lemma 1]{MR3661429}, we can also verify that the subdifferential $ \partial \Phi_{\delta}(\beta;{}\cdot{})$ coincides with a single-valued operator $ \mathcal{A}_\delta \subset H \times H $, defined as follows:
\begin{align}
    \mathcal{A}_\delta &: [\theta, \theta_\Gamma] \in D(\mathcal{A}_\delta) \subset H \mapsto \mathcal{A}_\delta [\theta, \theta_\Gamma]
    \nonumber
    \\[1ex]
    &:=\rule{0pt}{20pt}^\mathrm{t} \hspace{-0.5ex} \left[\begin{array}{c}
    -\mathrm{div}\bigl(\beta \nabla f_\delta(\nabla \theta)+\delta^2\nabla\theta\bigr)
    \\[1ex]
        -\kappa_\Gamma^2 \varDelta_\Gamma \theta_\Gamma + \bigl[ (\beta \nabla f_\delta(\nabla \theta) + \delta^2 \nabla\theta)\cdot n_\Gamma\bigr] 
    \end{array}\right]\in H,
    \label{op.T_e^d}
\end{align}
with the domain:
\begin{equation}\label{eff.T_e^d}
D(\mathcal{A}_\delta):=\left\{\begin{array}{l|c}[\theta,\theta_\Gamma] \in V_\delta& \parbox{8.8cm}{
        ~ \hfill $\alpha(\eta)\nabla f_\delta(\nabla \theta)+ \delta^2\nabla \theta \in \KS{L_{\mathrm{div}}^2(\Omega; \R^N)}$, \hfill ~
\\[1.5ex]
$-\kappa_\Gamma^2 \varDelta_\Gamma \theta_\Gamma + \bigl[(\beta\nabla f_\delta(\nabla \theta)+\delta^2\nabla\theta)\cdot n_\Gamma \bigr] \in L^2(\Gamma)$
}\end{array}\right\}.
\end{equation}
\end{rem}
\medskip

On this basis, we now prove the following key-lemmas and corollaries.
\begin{keyLem}
    \label{KeyLem01}
    Let us fix any $ \beta \in \KS{Y_\mathrm{c}}(\Omega) $. Then, for pairs of functions $ \KS{\bm{\theta} = }[\theta, \theta_\Gamma] \in H $ and $ \KS{\bm{\theta}^* = } [\theta^*, \theta_\Gamma^*] \in H $, the following two items are equivalent.
    \begin{description}
        \item[(C)]\KS{$ [\bm{\theta}, \bm{\theta}^*] \in \partial \Phi_0(\beta; {}\cdot{}) $ in $ H \times H $, i.e. $ \bm{\theta}^* \in \partial \Phi_0(\beta; \bm{\theta}) $ in $ H $, with $ \bm{\theta} \in D(\partial \Phi_0(\beta;{}\cdot{})) $.}
        \item[(D)]$ \KS{\bm{\theta} = } [\theta, \theta_\Gamma] \in W $, $ \theta_\Gamma \in H^2(\Gamma) $, and there exists a vector field $ \omega^* \in \KS{L_\mathrm{div}^2(\Omega; \R^N)} \cap  L^\infty(\Omega; \R^N) $, such that:
        \item[~~~(b0)]$ \KS{^\mathrm{t} \hspace{-0.1ex} \bm{\theta}^* = } \left[ \begin{array}{c}
                \theta^* \\[0.5ex] \theta_\Gamma^*
        \end{array} \right] = \left[ \begin{array}{c} 
            -\mathrm{div} (\beta \omega^*) \\[0.5ex] -\kappa_\Gamma^2 \varDelta_\Gamma \theta_\Gamma +\beta\trace{\Gamma} [\omega^* \cdot n_\Gamma]
        \end{array} \right] $ in $ H $.
        \item[~~~(b1)]$ \frac{(\omega^*, D\theta)}{|D\theta|} = 1 $, $ |D \theta| $-a.e. in $ \Omega $, and in particular, $ \omega^* \in \mathrm{Sgn}^N(\nabla \theta) $ a.e. in $ \Omega $;
        \item[~~~(b2)]$ [\omega^* \cdot n_\Gamma] \in \mathrm{Sgn}^1(\theta_\Gamma -\theta\trace{\Gamma}) $ a.e. on $ \Gamma $, i.e. $ \begin{cases}
                \theta\trace{\Gamma} = \theta_\Gamma, \mbox{ if $ |[\omega^* \cdot n_\Gamma]| < 1 $,}
                \\[0.5ex]
                \theta\trace{\Gamma} \leq \theta_\Gamma, \mbox{ if $ [\omega^* \cdot n_\Gamma] = 1 $,}
                \\[0.5ex]
                \theta\trace{\Gamma} \geq \theta_\Gamma, \mbox{ if $ [\omega^* \cdot n_\Gamma] = -1 $.}
        \end{cases} $
    \end{description}
\end{keyLem}
\begin{proof}
    For the proof of this key-lemma, we define a set-valued operator $ \mathcal{A} \subset H \times H $ by letting:
    \begin{align}\label{defOfOpA}
        \mathcal{A} ~& : \KS{\bm{\theta} = }[\theta, \theta_\Gamma] \in H \mapsto \KS{\mathcal{A}\bm{\theta}} =  \mathcal{A}[\theta, \theta_\Gamma] 
        \nonumber 
        \\
        & := \left\{ \begin{array}{l|l}
            \KS{\bm{\theta}^* =} [\theta^*, \theta_\Gamma^*] \in H & \parbox{5.5cm}{
                $ [\theta^*, \theta_\Gamma^*] $ fulfills (b0) for some vector field $ \omega^* \in \KS{L_\mathrm{div}^2(\Omega; \R^N)} \cap L^\infty(\Omega; \R^N) $ as in (b1) and (b2)
            }
        \end{array} \right\} \subset H.
    \end{align}
    Then, the equivalence of (C) and (D) is rephrased as the following coincidence of operators:
    \begin{align}
        & \mathcal{A} = \partial \Phi(\beta;{}\cdot{}) \mbox{ in $ H \times H $.}
    \end{align}
    Additionally, this coincidence will obtained as a consequence of the following two claims:
    \begin{description}
        \item[{\boldmath$ \sharp 1) $}]$ \mathcal{A} \subset \partial \Phi(\beta;{}\cdot{}) $ in $ H \times H $, so that $ \mathcal{A} $ is a monotone graph in $ H \times H $;
            \vspace{-1ex}
        \item[{\boldmath$ \sharp 2) $}]$ \mathcal{A} $ is maximal in the class of monotone graphs in $ H \times H $.
    \end{description}

    Now, we set the goal to verify the above claims. 
    \bigskip
    
    \noindent
    \textbf{\boldmath Verification of $ \sharp 1) $. } Let us assume that
        $ \bigl[ \bm{\theta}, \bm{\theta}^*] \in \mathcal{A} $ in $ H \times H $. 
    Then, for any \KS{$ \bm{\tilde{\theta}} =  [\tilde{\theta}, \tilde{\theta}_\Gamma] \in W $,} we can see from \eqref{defOfOpA} and Fact \label{Fact1} and Remark \ref{Rem.ex_hm} in Appendix that:
    \begin{align}
        & \KS{\bigl( \bm{\theta^*}, \bm{\tilde{\theta}} -\bm{\theta} \bigr)_H} = (\theta^*, \tilde{\theta} -\theta)_{L^2(\Omega)} +(\theta_\Gamma^*, \tilde{\theta}_\Gamma -\theta_\Gamma)_{L^2(\Gamma)}
        \\
        & = \int_\Omega -\mathrm{div}\,\bigl( \beta \omega^* \bigr)(\tilde{\theta} -\theta) \, dx +\int_\Gamma \bigl( -\kappa_\Gamma^2 \varDelta_\Gamma \theta_\Gamma + \beta\trace{\Gamma} [\omega^* \cdot n_\Gamma] \bigr) (\tilde{\theta}_\Gamma -\theta_\Gamma) \, d \Gamma
        \\
        & = \int_\Omega [\beta]^* d \bigl( \omega^*, D(\tilde{\theta} -\theta) \bigr) -\int_\Gamma \bigl[ (\beta \omega^*) \cdot n_\Gamma \bigr] (\tilde{\theta}\trace{\Gamma} -\theta\trace{\Gamma}) \, d \Gamma
        \\
        & \qquad -\kappa_\Gamma^2 \int_\Gamma \varDelta_\Gamma \theta_\Gamma (\tilde{\theta}_\Gamma -\theta_\Gamma) \, d \Gamma
        +\int_\Gamma \beta\trace{\Gamma} [\omega^* \cdot n_\Gamma] (\tilde{\theta}_\Gamma -\theta_\Gamma) \, d \Gamma
        \\
        & = \int_\Omega [\beta]^* d (\omega^*, D \tilde{\theta}) -\int_\Omega [\beta]^* d|D \theta| +\kappa_\Gamma^2 \int_\Gamma \nabla_\Gamma \theta_\Gamma \cdot \nabla_\Gamma (\tilde{\theta}_\Gamma -\theta_\Gamma) \, d \Gamma
        \\
        & \qquad +\int_\Gamma \beta\trace{\Gamma} [\omega^* \cdot n_\Gamma] \bigl( (\tilde{\theta}_\Gamma -\tilde{\theta}\trace{\Gamma}) -(\theta_\Gamma -\theta\trace{\Gamma}) \bigr) \, d \Gamma
        \\
        & \leq \Phi(\beta; \KS{\bm{\tilde{\theta}}}) -\Phi(\beta; \KS{\bm{\theta}}).
    \end{align}
    Thus, claim $ \sharp 1) $ is verified. 
    \bigskip

    \noindent
    \textbf{\boldmath Verification of $\sharp 2)$. } 
    By Minty's theorem, it is sufficient to show $ (\mathcal{A} +\mathcal{I}_{H}) = H $. Since the inclusion $ (\mathcal{A} +\mathcal{I}_H)H \subset H $ is trivial, our task can be reduced to show only the converse inclusion. 

    Let us fix any $ \KS{\bm{h} = [h, h_\Gamma]} \in H $. Then, applying Minty's theorem and Remark \ref{Rem.rx-subDiff}, we can find a sequence of functional pairs $ \{ \KS{\bm{\theta}_\delta =} [\theta_\delta, \theta_{\Gamma, \delta}] \in V_\delta \}_{\delta > 0} $ such that:
    \begin{align}\label{baseEq00}
        & \KS{\bm{h} -\bm{\theta}_\delta = \mathcal{A}_\delta \bm{\theta}_\delta \mbox{ in $ H $, for all $ \delta > 0 $. }}
    \end{align}
    Here, with \KS{\eqref{op.T_e^d}} in Remark \ref{Rem.rx-subDiff} in mind, we multiply the both sides of \eqref{baseEq00} by $ \KS{\bm{\theta}_\delta =} [\theta_\delta, \theta_{\Gamma, \delta}] $. Then, by using Young's inequality, it immediately follows that:
    \begin{align}\label{baseEq01}
        & \frac{1}{2} \bigl| \KS{\bm{\theta_\delta}} \bigr|_H^2 +\Phi_\delta(\beta; \KS{\bm{\theta}_\delta}) \leq  \frac{1}{2} \bigl| \KS{\bm{h}} \bigr|_H^2, \mbox{ for all $ \delta > 0 $.}
    \end{align}
    Subsequently, owing to \eqref{DefOfPhi_delta}, \eqref{baseEq01}, and the compactness theory of Rellich--Kondrachov type, we can find an approximating limit $ \bm{\theta} = [\theta, \theta_\Gamma] \in W $, together with a sequence:
    \begin{align}\label{seqOfdelta}
        & 1 > \delta_1 > \dots > \delta_n \downarrow 0 \mbox{ as $ n \to \infty $,}
    \end{align}
    and a sequence of functional pairs $ \{ \bm{\theta}_n = [\theta_n, \theta_{\Gamma, n}] \}_{n = 1}^\infty := \{ \bm{\theta}_{\delta_n} 
    \}_{n = 1}^\infty $, and we can see that:
    \begin{align}\label{rep01}
        & \begin{cases}
            \bm{\theta}_n = [\theta_n, \theta_{\Gamma, n}] \to \bm{\theta} = [\theta, \theta_\Gamma] ~\mbox{in $ L^1(\Omega) \times L^2(\Gamma) $, }
            \\
            \qquad \mbox{and weakly in $ L^2(\Omega) \times H^1(\Gamma) $,}
            \\[1ex]
            \delta_n \theta_n \to 0 \mbox{ in $ L^2(\Omega) $, and weakly in $ H^1(\Omega) $,}
        \end{cases}
        \mbox{as $ n \to \infty $,}
    \end{align}
    and
    \begin{align}
        & \int_\Omega \beta \nabla f_\delta (\nabla \theta_n) \cdot \nabla \varphi \, dx +\int_\Omega \nabla (\delta_n \theta_n) \cdot \nabla (\delta_n \varphi) \, dx +\kappa_\Gamma^2 \int_\Gamma \nabla_\Gamma \theta_{\Gamma, n} \cdot \nabla_\Gamma \varphi_\Gamma \, d \Gamma
        \\
        & \hspace{16ex} = \int_\Omega (h -\theta_n) \varphi \, dx +\int_\Gamma (h_\Gamma -\theta_{\Gamma, n}) \varphi_\Gamma \, d \Gamma, 
        \label{rep02}
        \\
        & \hspace{14ex} \mbox{for all $ [\varphi, \varphi_\Gamma] \in V_{\delta_n} (=V_{1}) $, and $ n = 1, 2, 3, \dots $.}
    \end{align}
    Additionally, since
    \begin{align}\label{rep03}
        & |\nabla f_{\delta_n}(\nabla \theta_n)| = \left| \frac{\nabla \theta_n}{\sqrt{\delta_n^2 +|\nabla \theta_n|^2}} \right| \leq 1 \mbox{ a.e. in $ \Omega $, for $ n = 1, 2, 3, \dots $,}
    \end{align}
    we can suppose to have a vector field $ \omega^* \in L^\infty(\Omega; \R^N) $, such that:
    \begin{align}\label{rep04}
        & 
            |\omega^*| \leq 1 \mbox{ a.e. in $ \Omega $, and }
            \nabla f_{\delta_n}(\nabla \theta_n) \to \omega^* ~\mbox{weakly-$*$ in $ L^\infty(\Omega; \R^N) $ as $ n \to \infty $,}
    \end{align}
    by taking a subsequence if necessary. 

    Now, applying the convergences in \eqref{seqOfdelta}, \eqref{rep01}, and \eqref{rep04} to the variational form \eqref{rep02}, we can see that:
    \begin{align}
        & \int_\Omega \beta \omega^* \cdot \nabla \varphi \, dx +\kappa_\Gamma^2 \int_\Gamma \nabla_\Gamma \theta_{\Gamma} \cdot \nabla_\Gamma \varphi_\Gamma \, d \Gamma 
        \\
        & \hspace{-10ex} = \int_\Omega (h -\theta) \varphi \, dx +\int_\Gamma (h_\Gamma -\theta_\Gamma) \varphi_\Gamma \, d \Gamma, \mbox{ for any $ [\varphi, \varphi_\Gamma] \in V_1 $.}
        \label{rep05}
    \end{align}
    In particular, taking any $ \varphi_0 \in H_0^1(\Omega) $ and putting $ [\varphi, \varphi_\Gamma] = [\varphi_0, 0] \in V_1 $ in \eqref{rep05}, we have:
    \begin{align}
        & \int_\Omega \beta \omega^* \cdot \nabla \varphi_0 \, dx = \int_\Omega (h -\theta) \varphi_0 \, dx, \mbox{ for any $ \varphi_0 \in H_0^1(\Omega) $,}
        \\
        & \hspace{7ex} \mbox{i.e. }~ -\mathrm{div} \, (\beta \omega^*) = h -\theta \in L^2(\Omega) \mbox{ in $ H^{-1}(\Omega) $.}
        \label{rep06}
    \end{align}
    Subsequently, taking any $ \psi_\Gamma \in H^{1}(\Gamma) $ with its harmonic extension $ [\psi_\Gamma]^\mathrm{hm} \in H^1(\Omega) $ as in Remark \ref{Rem.ex_hm} in Appendix, one can deduce from \eqref{rep05}, \eqref{rep06}, and Remark \ref{Prop.pairMeas} in Appendix that:
    \begin{align}
        & \bigl< -\kappa_\Gamma^2 \varDelta_\Gamma \theta_\Gamma +[(\beta \omega^*) \cdot n_\Gamma], \psi_\Gamma \bigr>_{H^{1}(\Gamma)} = \kappa_\Gamma^2 \int_\Gamma \nabla_\Gamma \theta_\Gamma \cdot \nabla_\Gamma \psi_\Gamma \, d \Gamma +\int_\Gamma \beta\trace{\Gamma} [\omega^* \cdot n_\Gamma] \psi_\Gamma \, d \Gamma 
        \\
        & 
        = \kappa_\Gamma^2 \int_\Gamma \nabla_\Gamma \theta_\Gamma \cdot \nabla_\Gamma \psi_\Gamma \, d \Gamma -\int_\Omega (h -\theta) [\psi_\Gamma]^\mathrm{hm} \, dx +\int_\Omega \beta \omega^* \cdot \nabla [\psi_\Gamma]^\mathrm{hm} \, dx = \int_\Gamma (h_\Gamma -\theta_\Gamma) \psi_\Gamma \, d \Gamma.
    \end{align}
    This implies that:
    \begin{align}
        & -\kappa_\Gamma^2 \varDelta_\Gamma \theta_\Gamma  = (h_\Gamma -\theta_\Gamma) -\beta\trace{\Gamma}[\omega^* \cdot n_\Gamma] \in L^2(\Gamma) ~\mbox{ in $ H^{-1}(\Gamma $),}
        \label{rep10}
        \\
        & \hspace{10ex} \mbox{so that $ \theta_\Gamma = ( -\kappa_\Gamma^2 \varDelta_\Gamma +\mathcal{I}_{L^2(\Gamma)})^{-1}\bigl( h_\Gamma -\beta\trace{\Gamma} [\omega^* \cdot n_\Gamma] \bigr) \in H^2(\Gamma) $.} 
    \end{align}

    Next, we take any $ [\tilde{\varphi}, \tilde{\varphi}_\Gamma] \in V_1 $, and put $ [\varphi, \varphi_\Gamma] = [\theta_n -\tilde{\varphi}, \theta_{\Gamma, n} -\tilde{\varphi}_\Gamma] \in V_1 $ in \eqref{rep02} to obtain that:
    \begin{align}
        \int_{\overline{\Omega}} d & [\beta |D \theta_n|]_{\theta_{\Gamma, n}} +\frac{\delta_n^2}{2} \int_\Omega |\nabla \theta_n|^2 \, dx +\frac{\kappa_\Gamma^2}{2} \int_\Gamma |\nabla_\Gamma \theta_{\Gamma, n}|^2 \, d \Gamma
        \\
        & \qquad +\int_\Omega (\theta_n -h)(\theta_n -\tilde{\varphi}) \, dx +\int_\Gamma (\theta_{\Gamma, n} -h_\Gamma)(\theta_{\Gamma, n} -\tilde{\varphi}_\Gamma) \, d \Gamma
        \\
        & \leq \int_\Omega \beta \nabla f_{\delta_n}(\nabla \theta_n) \cdot \nabla \tilde{\varphi} \, dx +\kappa_\Gamma^2 \int_\Gamma \nabla_\Gamma \theta_{\Gamma, n} \cdot \nabla_\Gamma \tilde{\varphi}_\Gamma \, d \Gamma
        \label{rep07}
        \\
        & \qquad +\frac{\delta_n^2}{2} \int_\Omega |\nabla \tilde{\varphi}|^2 \, dx +\delta_n \mathcal{L}^N(\Omega), ~\mbox{ for $ n = 1, 2, 3, \dots $.}
    \end{align}
    Here, having in mind \eqref{Phi_ep}, \eqref{DefOfPhi_delta}, \eqref{seqOfdelta}, \eqref{rep01}, \eqref{rep04}, and Fact \ref{Fact_Gm-conv}, let us take the limit-inf of both sides of \eqref{rep07}. Then, one can compute that:
    \begin{align}
        \int_{\overline{\Omega}} d & [\beta |D \theta|]_{\theta_\Gamma} +\frac{\kappa_\Gamma^2}{2} \int_\Gamma |\nabla_\Gamma \theta_\Gamma|^2 \, d \Gamma +\int_\Omega (\theta -h)(\theta -\tilde{\varphi}) \, dx +\int_\Gamma (\theta_\Gamma -h_\Gamma)(\theta_\Gamma -\tilde{\varphi}_\Gamma) \, d \Gamma
        \\
        & \leq \varliminf_{n \to \infty} \int_{\overline{\Omega}} d [\beta |D \theta_n|]_{\theta_{\Gamma, n}} +\frac{1}{2} \lim_{n \to \infty} \int_\Omega |\nabla (\delta_n \theta_n)|^2 \, dx +\frac{\kappa_\Gamma^2}{2} \varliminf_{n \to \infty} \int_\Gamma |\nabla_\Gamma \theta_{\Gamma, n}|^2 \, d \Gamma
        \\
        & \qquad +\varliminf_{n \to \infty} \int_\Omega (\theta_n -h)(\theta_n -\tilde{\varphi}) \, dx  +\varliminf_{n \to \infty} \int_\Gamma (\theta_{\Gamma, n} -h_\Gamma)(\theta_{\Gamma, n} -\tilde{\varphi}_\Gamma) \, d \Gamma
        \\
        & \leq \varliminf_{n \to \infty} \left( \int_{\overline{\Omega}} d [\beta |D \theta_n|]_{\theta_{\Gamma, n}} +\frac{\kappa_\Gamma^2}{2} \int_\Gamma |\nabla_\Gamma \theta_{\Gamma, n}|^2 \, d \Gamma \right.
        \\
        & \qquad \left. +\int_\Omega (\theta_n -h)(\theta_n -\tilde{\varphi}) \, dx +\int_\Gamma (\theta_{\Gamma, n} -h_\Gamma)(\theta_{\Gamma, n} -\tilde{\varphi}_\Gamma) \, d \Gamma \right)
        \\
        & \leq \int_\Omega \beta \omega^* \cdot \nabla \tilde{\varphi} \, dx +\kappa_\Gamma^2 \int_\Gamma \nabla_\Gamma \theta_\Gamma \cdot \nabla_\Gamma \tilde{\varphi}_\Gamma \, d \Gamma.
    \end{align}
    Therefore, from \eqref{Phi_gm(bt.)} and Fact \ref{Fact3} in Appendix, it follows that:
    \begin{align}
        & \int_\Omega [\beta]^* d |D \theta| +\int_\Gamma \beta\trace{\Gamma} |\theta_\Gamma -\theta\trace{\Gamma}| \, d \Gamma
        \leq \int_\Omega \beta \omega^* \cdot \nabla \tilde{\varphi} \, dx +\int_\Omega (h -\theta)(\theta -\tilde{\varphi}) \, dx 
        \\
        & +\int_\Gamma (h_\Gamma -\theta_\Gamma)(\theta_\Gamma -\tilde{\varphi}_\Gamma) \, d \Gamma +\kappa_\Gamma^2 \int_\Gamma \varDelta_\Gamma \theta_\Gamma (\theta_\Gamma -\tilde{\varphi}_\Gamma) \, d \Gamma, \mbox{ for any $ [\tilde{\varphi}, \tilde{\varphi}_\Gamma] \in V_1 $.}
        \label{rep08}
    \end{align}
Additionally, by using \eqref{rep06}, \eqref{rep10}, \eqref{rep08}, Fact \ref{Fact10}, and Remarks \ref{Rem.pairMeas} and \ref{Prop.pairMeas} in Appendix, we can deduce that:
\begin{align}
    & \int_\Omega \beta |\nabla \theta| \, dx +\int_\Omega [\beta]^* d |D^s \theta| +\int_\Gamma \beta\trace{\Gamma} |\theta_\Gamma -\theta\trace{\Gamma}| \, d \Gamma
    \\
    & = \int_\Omega [\beta]^* d |D\theta| +\int_\Gamma \beta\trace{\Gamma} |\theta_\Gamma -\theta\trace{\Gamma}| \, d \Gamma 
    \\
    & \leq -\int_\Omega \mathrm{div} \, (\beta \omega^*) \tilde{\varphi} \, dx +\int_\Gamma [(\beta \omega^*) \cdot n_\Gamma] \tilde{\varphi}\trace{\Gamma} \, d \Gamma
    \\
    & \qquad +\int_\Omega -\mathrm{div} \, (\beta \omega^*) (\theta -\tilde{\varphi}) \, dx +\int_\Gamma \beta\trace{\Gamma}[\omega^* \cdot n_\Gamma] (\theta_\Gamma -\tilde{\varphi}_\Gamma) \, d \Gamma
    \\
    & = -\int_\Omega \mathrm{div} \, (\beta \omega^*) \theta \, dx +\int_\Gamma [(\beta \omega^*) \cdot n_\Gamma] \theta_\Gamma \, d \Gamma 
    \\
    & = \int_\Omega d \bigl( (\beta \omega^*), D \theta \bigr) -\int_\Gamma \bigl[ (\beta \omega^*) \cdot n_\Gamma \bigr] \theta\trace{\Gamma} \, d \Gamma +\int_\Gamma \beta\trace{\Gamma} [\omega^* \cdot n_\Gamma]\theta_\Gamma \, d \Gamma
    \\
    & = \int_\Omega \beta \omega^* \cdot \nabla \theta \, dx +\int_\Omega [\beta]^* \, {\ts \frac{(\omega^*, D\theta)}{|D \theta|}} \, d |D^s \theta| +\int_\Gamma \beta\trace{\Gamma} [\omega^* \cdot n_\Gamma](\theta_\Gamma -\theta\trace{\Gamma}) \, d \Gamma.
    \label{rep20}
\end{align}
Meanwhile, invoking \eqref{rep04}, $ \beta \in W_\mathrm{c}(\Omega) $, Fact \ref{Prop.pairMeas} and Remark \ref{Rem.pairMeas} in Appendix, it is easily checked that:
\begin{align}
    & \begin{cases}
        \bigl| \bigl( [\beta]^* {\ts \frac{(\omega^*, D \theta)}{|D \theta|}} \bigr)(x) \bigr| \leq [\beta]^*(x)|\omega^*|_{L^\infty(\Omega; \R)} = [\beta]^*(x), \mbox{ for $ |D \theta| $-a.e. $ x \in \Omega $,}
        \\
        \qquad \mbox{with $ |\omega^* \cdot \nabla \theta| \leq |\nabla \theta| $ a.e. in $ \Omega $,}
        \\[1ex]
        \bigl| \bigl( \beta\trace{\Gamma}[\omega^* \cdot n_\Gamma] \bigr)(x_\Gamma) \bigr| \leq \beta\trace{\Gamma}(x_\Gamma)|\omega^*|_{L^\infty(\Omega; \R^N)} \leq \beta\trace{\Gamma}(x_\Gamma), \mbox{ for a.e. $ x_\Gamma \in \Gamma $.}
    \end{cases}
    \label{rep21}
\end{align}
As a consequence of \eqref{rep20}, \eqref{rep21}, and  $ \beta \in W_\mathrm{c}(\Omega) $, one can observe that:
\begin{align}
    & \begin{cases}
        {\ts \frac{(\omega^*, D \theta)}{|D \theta|}} = 1, \mbox{ $ |D \theta| $-a.e. in $ \Omega $, and in particular,}
        \\
        \qquad \omega^* \cdot \nabla \theta = |\nabla \theta|, \mbox{ and } \omega^* \in \mathrm{Sgn}^N(\nabla \theta), \mbox{ a.e. in $ \Omega $,}
        \\[1ex]
        [\omega^* \cdot n_\Gamma](\theta_\Gamma -\theta\trace{\Gamma}) = |\theta_\Gamma -\theta\trace{\Gamma}|,
        \\
        \qquad  \mbox{i.e. ~} [\omega^* \cdot n_\Gamma] \in \mathrm{Sgn}^1(\theta_\Gamma -\theta\trace{\Gamma}), \mbox{ a.e. on $ \Gamma $.}
    \end{cases}
    \label{rep22}
\end{align}
Taking into account \eqref{defOfOpA}, \eqref{rep06}, \eqref{rep10}, and \eqref{rep22}, it is inferred that:
\begin{align}
    & [\bm{\theta}, \bm{h} -\bm{\theta}] \in \mathcal{A} \mbox{ in $ H \times H $,}
    \\
    \mbox{i.e. } ~(\mathcal{A} +& \mathcal{I}_H) \bm{\theta} \ni \bm{h} \mbox{ in $ H $ with $ \bm{\theta} \in D(\mathcal{A}) $.}
\end{align}
This implies $ H \subset (\mathcal{A} +\mathcal{I}_H)H ~(\subset H) $, and the first claim $\sharp1$ and Minty's theorem will lead to the maximality of the graph of $ \mathcal{A} \subset H \times H $. 
\medskip

Thus, we finish the proof of this key-lemma. 
\end{proof}

\begin{keyLem}\label{keyLem02}
    Let $ \delta_\Gamma \geq 0 $ be a fixed constant, and let $ I \subset (0, T) $ be a fixed open interval. Let $ \bm{\vartheta} = [\vartheta, \vartheta_\Gamma] \in L^2(I; L^2(\Omega) \times H^{\frac{1}{2}}(\Gamma)) $ be a functional pair, such that:
    \begin{align}\label{kenConv00}
        & [|D \vartheta|(\cdot)]_{\vartheta_\Gamma(\cdot)}(\overline{\Omega}) \in L^1(I), \mbox{ and } \delta_\Gamma \vartheta_\Gamma \in L^2(I; H^1(\Gamma)).
    \end{align}
    Then, there exists a sequence of functional pairs $ \bigl\{ \bm{\widetilde{\vartheta}}_\ell =  [\widetilde{\vartheta}_\ell, \widetilde{\vartheta}_{\Gamma, \ell}] \bigr\}_{\ell = 1}^\infty \subset L^\infty(I; V_{\delta_\Gamma}) $, such that:
    \begin{align}
        \left\{ \rule{0pt}{48pt} \right. &
        \\[-88pt]
        & \bm{\widetilde{\vartheta}}_\ell = [\widetilde{\vartheta}_\ell, \widetilde{\vartheta}_{\Gamma, \ell}] \to \bm{\vartheta} = [\vartheta, \vartheta_\Gamma] \mbox{ in $ L^2(I; L^2(\Omega) \times H^{\frac{1}{2}}(\Gamma)) $,}
        \label{kenConv01a}
        \\[1ex]
        & \delta_\Gamma \widetilde{\vartheta}_{\Gamma, \ell} \to \delta_\Gamma \vartheta_\Gamma \mbox{ in $ L^2(I; H^{1}(\Gamma)) $,}
        \label{kenConv01b}
        \\[1ex]
        & \int_I \left| \int_\Omega |\nabla \widetilde{\vartheta}_\ell(t)| \, dx dt -\int_{\overline{\Omega}} d [|D\vartheta(t)|] _{\vartheta_\Gamma(t)} \right| dt \to 0,
        \label{kenConv01c}
    \end{align}
    and
    \begin{align}
        & \begin{cases}
            \bm{\widetilde{\vartheta}}_\ell(t) \to \bm{\vartheta}(t) \mbox{ in $ L^2(\Omega) \times H^{\frac{1}{2}}(\Gamma) $,}
            \\[1ex]
            \delta_\Gamma \widetilde{\vartheta}_{\Gamma, \ell}(t) \to \delta_\Gamma \vartheta_\Gamma(t) \mbox{ in $ H^{1}(\Gamma) $,}
            \\[1ex]
            \ds \int_\Omega |\nabla \widetilde{\vartheta}_{\ell}(t)| \, dx \to \int_{\overline{\Omega}} d [|D\vartheta(t)|]_{\vartheta_\Gamma(t)},
        \end{cases}
        \mbox{for a.e. $ t \in I $,}
        \label{kenConv01d}
    \end{align}
    as $ \ell \to \infty $.
\end{keyLem}
\begin{proof}
    For any $ m \in \N $, we take a $m$-equal division of the time-interval $ I $:
    \begin{align}
        & \inf I =: t_0^m < t_1^m < \dots < t_i^m < \dots < t_m^m := \sup I,
        \\
        & \hspace{-2ex} \mbox{with the partition points $ t_i^m := {\ts \frac{T}{m}} i $, for $ i = 1, \dots, m $.}
    \end{align}
    Besides, we put:
        \begin{align}
            & \bm{\overline{\vartheta}}_i^m = \bigl[ \, \overline{\vartheta}_i^{\,m}, \overline{\vartheta}_{\Gamma, i}^{\,m} \bigr] := \frac{m}{T} \int_{t_{i -1}^m}^{t_i^m} \bm{\vartheta}(\varsigma) \, d \varsigma \mbox{ in $ L^2(\Omega)  \times H^{\frac{1}{2}}(\Gamma) $, for $ i = 1, \dots, m $,}
            \label{kenConv02a}
        \end{align}
        and define a sequence of time-discrete approximation $ \bigl\{ \bm{\overline{\vartheta}}^{\,m} = [\,\overline{\vartheta}^{\,m}, \overline{\vartheta}_{\Gamma}^{\,m}] \bigr\}_{m = 1}^\infty \subset  L^\infty(I; $ $ L^2(\Omega) \times H^{\frac{1}{2}}(\Gamma)) $, by letting:
        \begin{align}
            \bm{\overline{\vartheta}}^{\,m} = [\,\overline{\vartheta}^{\,m}(t), & \overline{\vartheta}_{\Gamma}^{\,m}(t)] := \bm{\overline{\vartheta}}_i^{\,m} = \bigl[ \, \overline{\vartheta}_i^{\,m}, \overline{\vartheta}_{\Gamma, i}^{\,m} \bigr] \mbox{ in $ L^2(\Omega)  \times H^{\frac{1}{2}}(\Gamma) $,}
            \label{kenConv02b}
            \\
            & \mbox{if $ t \in [t_{i -1}^m, t_i^m) $, for $ i = 1, \dots, m $.}
        \end{align}

    Let us take any $ n \in \N $. Then, from \eqref{kenConv00}, \eqref{kenConv02a}, and \eqref{kenConv02b}, we can find a large number $ m_\ell \in \N $, such that
    \begin{align}
        & 2^\ell < m_{\ell} < m_{\ell +1}, \mbox{ for $ \ell = 1, 2, 3, \dots $,}
    \end{align}
    and the sequence $ \bigl\{ \bm{\overline{\vartheta}}_\ell = [ \, \overline{\vartheta}_\ell, \overline{\vartheta}_{\Gamma, \ell}] \bigr\}_{\ell = 1}^\infty := \bigl\{ \bm{\overline{\vartheta}}^{\,m_\ell} \bigr\}_{\ell = 1}^\infty $ satisfies that:
   \begin{align}
        & 
        \begin{cases}
            \bigl| \,\bm{\overline{\vartheta}}_\ell -\bm{\vartheta} \bigr|_{L^2(I; L^2(\Omega) \times H^{\frac{1}{2}}(\Gamma))} \leq 2^{-\ell},
            \\[1ex]
            \bigl| \delta_\Gamma \overline{\vartheta}_{\Gamma, \ell} -\delta_\Gamma \vartheta_{\Gamma}|_{L^2(I; H^1(\Omega))} \leq 2^{-\ell},
        \end{cases}
        \mbox{for $ \ell = 1, 2, 3, \dots $.}
        \label{kenConv06}
    \end{align}
    Also, by taking a subsequence of $ \{m_\ell\}_{\ell = 1}^\infty \subset \N $ if necessary, we can suppose:
    \begin{align}
        [\,\overline{\vartheta}_\ell(t), \overline{\vartheta}_{\Gamma, \ell}( & t)] \to [\vartheta(t), \vartheta_\Gamma(t)] \mbox{ in $ L^2(\Omega) \times H^{\frac{1}{2}}(\Gamma) $,} 
        \label{kenConv05}
        \\
        & \mbox{for a.e. $ t \in I $, as $ \ell \to \infty $,}
    \end{align}
    so that, by the lower semi-continuity of the total variation, 
    \begin{align}
        & \varliminf_{\ell \to \infty} |D \vartheta_\ell(t)|(\Omega) \geq |D \vartheta(t)|(\Omega), \mbox{ for a.e. $ t \in I $.}
        \label{kenConv03}
    \end{align}
    Furthermore, for every $ n \in \N $ and $ t \in I 
    $, we can find partition point $ s_t \in \{ t_i^{m_\ell} \}_{i = 0}^{m_\ell} $, such that:
    \begin{align}
        & |D \overline{\vartheta}_\ell(t)|(\Omega) = \sup \left\{ \begin{array}{l|l}
            \ds \int_\Omega \overline{\vartheta}_\ell(t) \, \mathrm{div} \, \varphi \, dx & \parbox{5.25cm}{
                $ \varphi \in C_\mathrm{c}(\Omega; \R^N) $, $ |\varphi| \leq 1 $ on $ \Omega $
            }
        \end{array} \right\}
        \\
        & \leq \frac{m_\ell}{T} \int_{s_t}^{s_t+\frac{T}{m_\ell}}\sup \left\{ \begin{array}{l|l}
            \ds \int_\Omega \vartheta(\varsigma) \, \mathrm{div} \, \varphi \, dx & \parbox{5.25cm}{
                $ \varphi \in C_\mathrm{c}(\Omega; \R^N) $, $ |\varphi| \leq 1 $ on $ \Omega $
            }
        \end{array} \right\} \, d \varsigma
        \\
        & = \frac{m_\ell}{T} \int_{s_t}^{s_t +\frac{T}{m_\ell}} |D \vartheta(\varsigma)|(\Omega)  \, d \varsigma, \mbox{ and $ s_t \leq t < s_t +\frac{T}{m_\ell} $.}
    \end{align}
    This implies that:
    \begin{align}
        & \varlimsup_{\ell \to \infty} |D \overline{\vartheta}_\ell(t)|(\Omega) \leq |D \vartheta(t)|(\Omega), 
        \mbox{ for a.e. $ t \in I $,}
        \label{kenConv04}
        \\
        \mbox{i.e. for } & \mbox{any Lebesgue point of the function $ |D\vartheta(\cdot)|(\Omega) \in L^1(I) $.}
    \end{align}
\noeqref{kenConv05, kenConv03} 
    Due to \eqref{kenConv06}--\eqref{kenConv04}, we can say that:
    \begin{align}
        & \begin{cases}
            \ds \int_I \left| |D \overline{\vartheta}_\ell(t)|(\Omega) -|D \vartheta(t)|(\Omega)  \right| \, dt \leq 2^{-\ell},
            \\[2ex]
            \ds \int_I \bigl|(\overline{\vartheta}_\ell)\trace{\Gamma}(t) -\vartheta\trace{\Gamma}(t) \bigr|_{L^1(\Gamma)} \, dt \leq 2^{-\ell},
        \end{cases}
        \mbox{for $ \ell = 1, 2, 3, \dots $,}
    \end{align}
    by taking more one subsequence if necessary, and hence, we can further compute that:
    \begin{align}
        & \int_I \bigl| [|D \overline{\vartheta}_\ell(t)|]_{\overline{\vartheta}_{\Gamma, \ell}(t)}(\overline{\Omega}) -[|D \vartheta(t)|]_{\vartheta_\Gamma(t)}(\overline{\Omega}) \bigr| \, dt
        \\
        \leq ~& \int_I \left| |D \overline{\vartheta}_\ell(t)|(\Omega) -|D \vartheta(t)|(\Omega)  \right| \, dt 
        \\
        & +\int_I \bigl|(\overline{\vartheta}_\ell)\trace{\Gamma}(t) -\vartheta\trace{\Gamma}(t) \bigr|_{L^1(\Gamma)} \, dt +\int_I \bigl|\overline{\vartheta}_{\Gamma, \ell}(t) -\vartheta_{\Gamma}(t) \bigr|_{L^1(\Gamma)} \, dt
        \label{kenConv10}
        \\
        \leq ~&  \bigl( 3 +T^{\frac{1}{2}} \mathcal{H}^{N -1}(\Gamma)^{\frac{1}{2}} \bigr) \cdot 2^{-\ell}.
    \end{align}

    Next, with the dense embedding $ H^1(\Gamma) \subset H^{\frac{1}{2}}(\Gamma) $ in mind, we take sequences $ \bigl\{ \widetilde{\vartheta}_{\Gamma, i}^{m_\ell} \bigr\}_{\ell = 1}^\infty \subset H^1(\Gamma) $, for $ i = 1, \dots, m_\ell $, such that:
    \begin{align}
        & \begin{cases}
            \ds \bigl| \widetilde{\vartheta}_{\Gamma, i}^{m_\ell} -\overline{\vartheta}_{\Gamma, i}^{\,m_\ell} \bigr|_{H^{\frac{1}{2}}(\Gamma)} \leq \frac{2^{-\ell}}{m_\ell}, 
            \\[2ex]
            \delta_\Gamma \widetilde{\vartheta}_{\Gamma, i}^{m_\ell} = \delta_\Gamma \overline{\vartheta}_{\Gamma, i}^{\,m_\ell} \mbox{ in $ H^1(\Gamma) $,} 
        \end{cases}
        \mbox{for $ i = 1, \dots, m_\ell $,}
        \label{kenConv11}
    \end{align}
    and subsequently, we invoke Fact \ref{Fact4} to obtain a sequences  $ \bigl\{ \widetilde{\vartheta}_{i}^{m_\ell} \bigr\}_{\ell = 1}^\infty \subset H^1(\Omega) $, for $ i = 1, \dots, m_\ell $, such that:
    \begin{align}
        & \begin{cases}
            \widetilde{\vartheta}_i^{m_\ell}\trace{\Gamma} = \widetilde{\vartheta}_{\Gamma, i}^{\,m_\ell}, \mbox{ a.e. on $ \Gamma $,}
            \\[1ex]
            \ds \bigl| \widetilde{\vartheta}_{i}^{m_\ell} -\overline{\vartheta}_{i}^{\,m_\ell} \bigr|_{L^2(\Omega)} \leq \frac{2^{-\ell}}{m_\ell},
            \\[1ex]
            \ds \left| \int_\Omega |\nabla \widetilde{\vartheta}_i^{m_\ell}| \, dx -[|D \overline{\vartheta}_i^{\,m_\ell}|]_{\widetilde{\vartheta}_{\Gamma, i}^{\,m_\ell}}(\overline{\Omega}) \right| \leq \frac{2^{-\ell}}{m_\ell},
        \end{cases}
        \mbox{for $ i = 1, \dots, m_\ell $.}
        \label{kenConv13}
    \end{align}
    Based on these, we define a sequence $ \bigl\{ \bm{\widetilde{\vartheta}}_\ell =  \bigl[ \widetilde{\vartheta}_{\ell}, \widetilde{\vartheta}_{\Gamma, \ell} \bigr] \bigr\}_{\ell = 1}^\infty \subset L^\infty(I; H^1(\Gamma)) $, by letting:
    \begin{align}
        & \bm{\widetilde{\vartheta}}_\ell(t) = \bigl[ \widetilde{\vartheta}_{\ell}(t), \widetilde{\vartheta}_{\Gamma, \ell}(t) \bigr] := \bm{\widetilde{\vartheta}}_i^{m_\ell} \mbox{ in $ V_1 $,} 
        \\
        & ~~ \mbox{if $ t \in [t_{i -1}^{m_\ell}, t_{i}^{m_\ell}) $, for $ i = 1, \dots, m_\ell $.}
        \label{kenConv12}
    \end{align}

\noeqref{kenConv11, kenConv13} 
    On account of \eqref{kenConv06}, and \eqref{kenConv10}--\eqref{kenConv12}, we can infer that:
    \begin{align}
        & \begin{cases}
            \bigl| \bm{\widetilde{\vartheta}}_\ell -\bm{\vartheta} \bigr|_{L^2(I; L^2(\Omega) \times H^{\frac{1}{2}}(\Gamma))} \leq 2 \cdot 2^{-\ell} +2 \cdot 2^{-\ell} = 2^{-\ell +2},
            \\[1.5ex]
            \bigl| \delta_\Gamma \widetilde{\vartheta}_{\Gamma, \ell} -\delta_\Gamma \vartheta_\Gamma \bigr|_{L^2(I; H^{\frac{1}{2}}(\Gamma))} \leq 2^{-\ell}, \mbox{ for $ \ell = 1, 2, 3, \dots $,}
        \end{cases}
    \end{align}
    and 
    \begin{align}
        \int_I \left| \int_\Omega \right. & |\nabla \widetilde{\vartheta}_\ell(t)| \, dx - \left. \int_{\overline{\Omega}} d [|D \vartheta(t)|]_{\vartheta_\Gamma(t)} \right| \, dt 
        \\
        & \leq m_\ell \cdot \frac{2^{-\ell}}{m_\ell} +\bigl| \widetilde{\vartheta}_{\Gamma, \ell} -\overline{\vartheta}_{\Gamma, \ell} \bigr|_{L^1(I; L^1(\Gamma))} +(3 +T^{\frac{1}{2}}) \cdot 2^{-\ell}
        \\
        & \leq \bigl( 4 +(1 +T^{\frac{1}{2}}) \mathcal{H}^{N -1}(\Gamma)^{\frac{1}{2}} \bigr) \cdot 2^{-\ell}, \mbox{ for $ \ell = 1, 2, 3, \dots $.}
    \end{align}
    These estimates lead to the convergence \eqref{kenConv01b}.

    Furthermore, by taking a subsequence if necessary, we can include the pointwise convergence \eqref{kenConv01d} as a property of the sequence $ \bigl\{ [\widetilde{\vartheta}_\ell, \widetilde{\vartheta}_{\Gamma, \ell}] \bigr\}_{\ell = 1}^\infty \subset L^\infty(I; V_{\delta_\Gamma}) $. 
\end{proof}
\begin{keyLem}\label{keyLem03-00}
    Let $ I \subset (0, T) $ be an open interval. Let us assume:
    \begin{align}
        & \begin{cases}
            \beta \in L^\infty(I; H^1(\Omega)), ~ \beta(t) \in Y_0(\Omega) \mbox{ a.e. $ t \in I $,}
            \\[0.5ex]
            \bm{\vartheta} = [\vartheta, \vartheta_\Gamma] \in L^2(I; L^2(\Omega) \times H^{\frac{1}{2}}(\Gamma)), ~ \bigl| D \vartheta(\cdot) \bigr|_{\vartheta_\Gamma(\cdot)}(\overline{\Omega}) \in L^1(I).
        \end{cases}
        \label{core02a-00}
    \end{align}
    Then, the function of time:
    \begin{align*}
        & t \in I \mapsto [\beta(t)|D \vartheta(t)|]_{\vartheta_\Gamma(t)}(\overline{\Omega}) \in [0, \infty]
    \end{align*}
    is measurable. In particular, if:
    \begin{align}
        & \begin{cases}
            \beta \in C(\overline{I}; L^2(\Omega)), ~ \beta(t) \in \KS{Y_\mathrm{c}}(\Omega) \mbox{ a.e. $ t \in I $,}
            \\[0.5ex]
            \bm{\vartheta} = [\vartheta, \vartheta_\Gamma] \in C(\overline{I}; L^2(\Omega) \times H^{\frac{1}{2}}(\Gamma)),
        \end{cases}
        \label{core02a-01}
    \end{align}
    then this function is l.s.c. on $ [0, T] $. 
\end{keyLem}
\begin{proof}
    By the assumption \eqref{core02a-00}, we can apply Key-Lemma \ref{keyLem02}, and we can take a sequence of functions $ \{ \bm{\widetilde{\vartheta}}_\ell = [\widetilde{\vartheta}_\ell, \widetilde{\vartheta}_{\Gamma, \ell}] \} \subset L^\infty(I; V_\varepsilon) $, that achieve the convergences \eqref{kenConv01b} and \eqref{kenConv01d}. 

    On this basis, let us fix $ t \in I $ satisfying \eqref{kenConv01d}. Besides, with Fact \ref{Fact0} in mind, let us put:
    \begin{align}
        & 
        \begin{cases}
            \ds A_1 := [({\ts \frac{1}{2}} +\beta(t))|D \vartheta(t)|]_{\vartheta_\Gamma(t)}(\overline{\Omega}),  
            \\[1ex]
            A_2 := [({\ts \frac{1}{2}} +|\beta(t)|_{L^\infty(\Omega)} -\beta(t))|D \vartheta(t)|]_{\vartheta_\Gamma(t)}(\overline{\Omega}),
        \end{cases}
    \end{align}
    and
    \begin{align}
        & 
        \begin{cases}
            \ds a_\ell^1 := [({\ts \frac{1}{2}} +\beta(t))|D \widetilde{\vartheta}_\ell(t)|]_{\widetilde{\vartheta}_{\Gamma, \ell}(t)}(\overline{\Omega}),  
            \\[1ex]
            a_\ell^2 := [({\ts \frac{1}{2}} +|\beta(t)|_{L^\infty(\Omega)} -\beta(t))|D \widetilde{\vartheta}_\ell(t)|]_{\widetilde{\vartheta}_{\Gamma, \ell}(t)}(\overline{\Omega}),
        \end{cases}
        \mbox{for $ \ell = 1, 2,3 , \dots $.}
    \end{align}
    Let $ \bB_\Omega \subset \R^N $ be a sufficiently large open ball such that $ \bB_\Omega \supset \Omega $. Then with the properties of extension $ [{}\cdot{}]^\mathrm{ex} $ (see Section 7.2 in Appendix, for details), one can see that:
    \begin{align}
        & \left\{ \hspace{-3ex} \parbox{7.7cm}{
            \vspace{-2ex}
            \begin{itemize}
                \item $ [\widetilde{\vartheta}_{\Gamma, \ell}(t)]^\mathrm{ex} \to [\vartheta_\Gamma(t)]^\mathrm{ex} $ in $ H^1(\R^N) $, and in $ W^{1, 1}(\bB_\Omega) $,
                \item $ [\widetilde{\vartheta}_{\ell}(t)]_{\vartheta_{\Gamma, \ell}(t)}^\mathrm{ex} \to [\vartheta(t)]_{\vartheta_\Gamma(t)}^\mathrm{ex} $ in $ L^2(\R^N) $, and weakly-$*$ in $ BV(\bB_\Omega) $,
            \end{itemize}
            \vspace{-2ex}
        } \right.
        ~\mbox{  as $ \ell \to \infty $,}
    \end{align}
    \begin{align}
        A_1 ~& = [({\ts \frac{1}{2}} +\beta(t))|D \vartheta(t)|]_{\vartheta_\Gamma(t)}(\overline{\Omega})
        \\
        & = \sup \left\{ \begin{array}{l|l}
            \ds \int_{\bB_\Omega} [\vartheta(t)]_{\vartheta_\Gamma(t)}^\mathrm{ex} \, \mathrm{div} \, \bigl( ({\ts \frac{1}{2}} +\beta(t)) \varphi \bigr) \, dx & 
            \parbox{3.5cm}{$ \varphi \in C_\mathrm{c}^1(\bB_\Omega; \R^N) $, $ |\varphi| \leq 1 $ on $ \bB_\Omega $}
        \end{array} \right\}
        \\
        & \qquad -\int_{\bB_\Omega \setminus \overline{\Omega}} \bigl( {\ts \frac{1}{2}} +[\beta(t)]^\mathrm{ex} \bigr) |\nabla [\vartheta_\Gamma(t)]^\mathrm{ex}| \, d \Gamma
        \\
        & = \sup \left\{ \begin{array}{l|l}
            \ds \lim_{\ell \to \infty} \int_{\Omega} [\widetilde{\vartheta}_\ell(t)]_{\widetilde{\vartheta}_{\Gamma, \ell}(t)} \, \mathrm{div} \, \bigl( ({\ts \frac{1}{2}} +\beta(t)) \varphi \bigr) \, dx & 
            \parbox{3.5cm}{$ \varphi \in C_\mathrm{c}^1(\bB_\Omega; \R^N) $, $ |\varphi| \leq 1 $ on $ \bB_\Omega $}
        \end{array} \right\}
        \\
        & \qquad -\lim_{\ell \to \infty} \int_{\bB_\Omega \setminus \overline{\Omega}} \bigl( {\ts \frac{1}{2}} +[\beta(t)]^\mathrm{ex} \bigr) |\nabla [\widetilde{\vartheta}_{\Gamma, \ell}(t)]^\mathrm{ex}| \, d \Gamma
        \\
        & \leq \varliminf_{\ell \to \infty} \left( \sup \left\{ \begin{array}{l|l}
            \ds \int_{\bB_\Omega} [\widetilde{\vartheta}_\ell(t)]_{\widetilde{\vartheta}_{\Gamma, \ell}(t)} \, \mathrm{div} \, \bigl( ({\ts \frac{1}{2}} +\beta(t)) \varphi \bigr) \, dx & 
            \parbox{3.5cm}{$ \varphi \in C_\mathrm{c}^1(\bB_\Omega; \R^N) $, $ |\varphi| \leq 1 $ on $ \bB_\Omega $}
        \end{array} \right\} \right)
        \\
        & \qquad -\lim_{\ell \to \infty} \int_{\bB_\Omega \setminus \overline{\Omega}} \bigl( {\ts \frac{1}{2}} +[\beta(t)]^\mathrm{ex} \bigr) |\nabla [\widetilde{\vartheta}_{\Gamma, \ell}(t)]^\mathrm{ex}| \, d \Gamma
        \\
        & = \varliminf_{\ell \to \infty} [({\ts \frac{1}{2}} +\beta(t))|D \widetilde{\vartheta}_\ell(t)|]_{\widetilde{\vartheta}_{\Gamma, \ell}(t)}(\overline{\Omega}) = \varliminf_{\ell \to \infty} a_\ell^1, 
    \end{align}
    as well as, 
    \begin{align}
        A_2 ~& = [({\ts \frac{1}{2}} +|\beta(t)|_{L^\infty(\Omega)} -\beta(t))|D \vartheta(t)|]_{\vartheta_\Gamma(t)}(\overline{\Omega})
        \\
        & \leq \varliminf_{\ell \to \infty} [({\ts \frac{1}{2}} +|\beta(t)|_{L^\infty(\Omega)} -\beta(t))|D \widetilde{\vartheta}_\ell(t)|]_{\widetilde{\vartheta}_{\Gamma, \ell}(t)}(\overline{\Omega}) = \varliminf_{\ell \to \infty} a_\ell^2,
    \end{align}
    and furthermore, 
    \begin{align}
        \lim_{\ell \to \infty} \,& \bigl( a_\ell^1 +a_\ell^2 \bigr)  = (1 +|\beta(t)|_{L^\infty(\Omega)}) \lim_{\ell \to \infty} \int_\Omega |\nabla \widetilde{\vartheta}_\ell| \, dx 
        \\
        & = (1 +|\beta(t)|_{L^\infty(\Omega)}) \int_{\overline{\Omega}} d [|D \vartheta(t)|]_{\vartheta_\Gamma(t)} = A_1 +A_2.
    \end{align}
    So, we can apply Fact \ref{Fact0} to the case when $ m = 2 $, and we can observe that 
    \begin{align}
        & \int_{\overline{\Omega}} \, d[\beta(t) |D \vartheta(t)|]_{\vartheta_\Gamma(t)} = \lim_{\ell \to \infty} \int_\Omega |\nabla \widetilde{\vartheta}_\ell(t)| \, dx, \mbox{ for a.e. $ t \in I $.}
    \end{align}
    It implies the measurability of $ t \in I \mapsto [\beta(t) |D \vartheta(t)|]_{\vartheta_\Gamma(t)} \in [0, \infty)  $. 

Next, we assume \eqref{core02a-01} to verify the lower semi-continuity of $ [\beta(\cdot)|D \vartheta(\cdot)|]_{\vartheta_\Gamma(\cdot)} $ on $ \overline{I} $.  Under \eqref{core02a-01}, we easily check that:
    \begin{align}
        & \left\{ \hspace{-3ex} \parbox{12.9cm}{
            \vspace{-2ex}
            \begin{itemize}
                \item $ [\beta]^\mathrm{ex} : t \in \overline{I} \mapsto [\beta(t)]^\mathrm{ex} \in H^1(\R^N) $ is weakly continuous, and $ [\beta]^\mathrm{ex} \in C(\overline{I}; L^2(\bB_\Omega)) $
                \vspace{-1ex}
            \item $ [\vartheta_\Gamma]^\mathrm{ex} : t \in \overline{I} \mapsto [\vartheta_\Gamma(t)]^\mathrm{ex} \in H^1(\R^N) $ is continuous, 
                \vspace{-1ex}
            \item $ [\vartheta]_{\vartheta_\Gamma}^\mathrm{ex} : t \in \overline{I} \mapsto [\vartheta(t)]_{\vartheta_\Gamma(t)}^\mathrm{ex} \in L^2(\bB_\Omega) $ is continuous.
            \end{itemize}
            \vspace{-2ex}
        } \right. 
    \end{align}
    In view of this, the lower semi-continuity of $ [\beta|D \vartheta|]_{\vartheta_\Gamma}(\overline{\Omega}) $ in $ I $ can be verified, as follows:
    \begin{align}
        & [\beta|D \vartheta|(t)]_{\vartheta_\Gamma(t)}(\overline{\Omega}) = \int_{\bB_\Omega} d [\beta(t) |D \vartheta(t)|]_{\vartheta_\Gamma(t)} -\int_{\bB_\Omega \setminus \overline{\Omega}} [\beta]^\mathrm{ex}(t) |\nabla [\vartheta_\Gamma]^\mathrm{ex}(t)| \, dx
        \\
        & = \sup \left\{ \begin{array}{l|l}
            \ds \lim_{\varsigma \to t} \int_{\bB_\Omega} [\vartheta(\varsigma)]_{\vartheta_\Gamma(\varsigma)}^\mathrm{ex} \mathrm{div} \, ([\beta]^\mathrm{ex}(\varsigma) \varphi) \, dx  & \parbox{3.25cm}{
                $ \varphi \in C_\mathrm{c}^1(\bB_\Omega; \R^N) $, $ |\varphi| \leq 1 $ on $ \Omega $
            }
        \end{array} \right\} 
        \\
        & \qquad -\lim_{\varsigma \to t} \int_{\bB_\Omega \setminus \overline{\Omega}} [\beta]^\mathrm{ex}(\varsigma) |\nabla [\vartheta_\Gamma]^\mathrm{ex}(\varsigma)| \, dx
        \\
        & \leq \varliminf_{\varsigma \to t} \left( \int_{\bB_\Omega} d [\beta(\varsigma) |D \vartheta(\varsigma)|]_{\vartheta_\Gamma(\varsigma)} -\int_{\bB_\Omega \setminus \overline{\Omega}} [\beta]^\mathrm{ex}(\varsigma) |\nabla [\vartheta_\Gamma]^\mathrm{ex}(\varsigma)| \, dx \right)
        \\
        & = \varliminf_{\varsigma \to t} \, [\beta|D \vartheta|(\varsigma)]_{\vartheta_\Gamma(\varsigma)}(\overline{\Omega}), \mbox{ for any $ t \in \overline{I} $.}
    \end{align}
    Thus, we conclude this key-lemma. 
\end{proof}
\begin{keyLem}\label{keyLem03}
    Let $ I \subset (0, T) $ be a fixed open interval. Let $ \beta \in L^\infty(I; H^1(\Omega)) $ and $ [\vartheta, \vartheta_\Gamma] $ be as in \eqref{core02a-00}. Let $ \{ \beta_n \}_{n = 1}^\infty \subset L^\infty(I; H^1(\Omega)) $ be a sequence of functions, such that:
    \begin{align}\label{core01}
    & 
        \begin{cases}
            \beta_n \to \beta \mbox{ in $ L^2(I; L^2(\Omega)) $, and weakly-$*$ in $ L^\infty(I; H^1(\Omega)) $, as $ n \to \infty $,}
            \\[1ex]
            \bigl\{ \beta_n(t) \, \bigl| \, n = 1, 2, 3, \dots \bigr\} \subset Y_{0}(\Omega), ~\mbox{ a.e. $ t \in I $.}
        \end{cases}
    \end{align}
    Let  $ \{[\vartheta_n, \vartheta_{\Gamma, n}]\}_{n = 1}^\infty \subset L^2(I; L^2(\Omega) \times H^{\frac{1}{2}}(\Gamma)) $ be a sequence of functional pairs such that:
\begin{equation}\label{core02a}
\left\{ ~ \parbox{8cm}{
    $ \bigl\{  [|D \vartheta_n({}\cdot{})|]_{\vartheta_\Gamma({}\cdot{})}(\overline{\Omega}) \bigr\}_{n = 1}^\infty \subset L^1(I) $, 
    \\[1ex]
    $ \ds L_* := \sup_{n \in \N} \left\{  \bigl| [|D \vartheta_n({}\cdot{})|]_{\vartheta_\Gamma({}\cdot{})}(\overline{\Omega}) \bigr|_{L^1(I)} \right\} < \infty  $,
} \right.
\end{equation}
and
    \begin{align}\label{core02b}
        & \bm{\vartheta}_n = [\vartheta_n, \vartheta_{\Gamma, n}] \to \bm{\vartheta} = [\vartheta, \vartheta_\Gamma] \mbox{ in $ L^2(I; L^2(\Omega) \times H^{\frac{1}{2}}(\Gamma)) $, as $ n \to \infty $,}
    \end{align}
Then, it holds that: 
\begin{equation}\label{kenCore01}
    \varliminf_{n \to \infty} \int_I [ \beta_n(t) |D \vartheta_n(t)| ]_{\vartheta_{\Gamma, n}(t)}(\overline{\Omega}) \, dt \geq \int_I [ \beta(t) |D \vartheta(t)| ]_{\vartheta_{\Gamma}(t)}(\overline{\Omega}) \, dt.
\end{equation}
\end{keyLem}
\begin{proof}
    For the proof of \eqref{kenCore01}, it is sufficient to consider only the case when:
    \begin{align}
        \Phi_* ~& = \varliminf_{n \to \infty} \int_I[\beta_n(t)|D \vartheta_n(t)|]_{\vartheta_{\Gamma, n}(t)}(\overline{\Omega}) \, dt  < \infty,
    \end{align}
    because the other case is trivial. 
    In this case, we can find a subsequence $ \{n_k\}_{k = 1}^\infty \subset \{n\}_{n = 1}^\infty $, such that:
    \begin{align}
        & \Phi_* = \lim_{k \to \infty} \int_I [\beta_{n_k}(t)|D \vartheta_{n_k}(t)|]_{\vartheta_{\Gamma, n_k}(t)}(\overline{\Omega}) \, dt.
        \label{kenCore02}
    \end{align}
\noeqref{core02a} 
    Then, by taking more one subsequence if necessary, the assumptions \eqref{core01}--\eqref{core02b}, and Remark \ref{Rem.ex_hm} in Appendix enable us to suppose:
    \begin{align}
        & \left\{ \hspace{-3ex} \parbox{9.5cm}{
            \vspace{-2ex}
            \begin{itemize}
                \item $ 0 \leq [\beta]^\mathrm{ex}(t) \in L^\infty(\R^N) \cap H^1(\R^N) $, for a.e. $ t \in I $,
                    \vspace{-3.5ex}
                \item $ \ds L_*^\mathrm{ex} := \sup_{n \in \N} \, \bigl| |D [\vartheta_{n}]_{\vartheta_{\Gamma, n}}^\mathrm{ex}|(\bB_\Omega) \bigr|_{L^1(I)} < \infty $.
            \end{itemize}
            \vspace{-2ex}
        } \right.
        \label{kenCore04}
    \end{align}
    \begin{align}
        & \left\{ \hspace{-3ex} \parbox{7.7cm}{
            \vspace{-2ex}
            \begin{itemize}
                \item $ [\beta_{n_k}]^\mathrm{ex} \to [\beta]^\mathrm{ex} $ in $ L^2(I; L^2(\R^N)) $, and weakly-$*$ in $ L^\infty(I; H^1(\Omega)) $,
                    \vspace{-0.5ex}
                \item $ [\vartheta_{\Gamma, n_k}]^\mathrm{ex} \to [\vartheta_\Gamma]^\mathrm{ex} $ in $ L^2(I; H^1(\R^N)) $, and in $ L^1(I; W^{1, 1}(\bB_\Omega)) $,
                    \vspace{-0.5ex}
                \item $ [\vartheta_{n_k}]_{\vartheta_{\Gamma, n_k}}^\mathrm{ex} \to [\vartheta]_{\vartheta_\Gamma}^\mathrm{ex} $ in $ L^2(I; L^2(\R^N)) $,,
            \end{itemize}
            \vspace{-2ex}
            }
        \right.
        \mbox{as $ k \to \infty $,}
        \label{kenCore05}
    \end{align}
    and
    \begin{align}
        & \left\{ \hspace{-3ex} \parbox{8cm}{
            \vspace{-2ex}
            \begin{itemize}
                \item $ [\beta_{n_k}]^\mathrm{ex}(t) \to [\beta]^\mathrm{ex}(t) $ in $ L^2(\R^N) $, and weakly in $ H^1(\R^N) $,
                    \vspace{-0.5ex}
                \item $ [\vartheta_{\Gamma, n_k}]^\mathrm{ex}(t) \to [\vartheta_\Gamma]^\mathrm{ex}(t) $ in $ H^1(\R^N) $, and in $ W^{1, 1}(\bB_\Omega) $,
                    \vspace{-0.5ex}
                \item $ [\vartheta_{n_k}(t)]_{\vartheta_{\Gamma, n_k}(t)}^\mathrm{ex} \to [\vartheta(t)]_{\vartheta_\Gamma(t)}^\mathrm{ex} $ in $ L^2(\R^N) $,
            \end{itemize}
            \vspace{-2ex}
        } \right.
        ~\mbox{ a.e. $ t \in I $, as $ k \to \infty $.}
        \label{kenCore03}
    \end{align}

\noeqref{kenCore04, kenCore05} 
    Now, let us take any $ r > 0 $. Then, on account of \eqref{01_[bt|Du|]_gm}, \eqref{kenCore02}--\eqref{kenCore03}, and Fatou's lemma,  we can compute that:
    \begin{align}
        & \int_I [\beta(t) |D \vartheta(t)|]_{\vartheta_\Gamma(t)}(\overline{\Omega}) \,dt \leq \int_I [(\beta(t) +r) |D \vartheta(t)|]_{\vartheta_\Gamma(t)}(\overline{\Omega}) \,dt
        \\
        & = \int_I \sup \left\{ \begin{array}{l|l} 
            \ds \int_{\bB_\Omega} [\vartheta(t)]^\mathrm{ex} \, \mathrm{div} \bigl( [\beta(t)]^\mathrm{ex} +r) \varphi \bigr) \, dx & \parbox{3cm}{ 
               $ \varphi \in C_\mathrm{c}^1(\Omega; \R^N) $, $ |\varphi| \leq 1 $ on $ \bB_\Omega $ 
            }
            \end{array} \right\} \, dt
            \\
            & \qquad 
            -\int_I \int_{\bB_\Omega \setminus \overline{\Omega}} ([\beta(t)]^\mathrm{ex} +r) |\nabla [\vartheta_\Gamma(t)]^\mathrm{ex}| \, dx dt
            \\
            & \leq \int_I \varliminf_{k \to \infty} \left( \sup \left\{ \begin{array}{l|l} 
                \ds \int_{\bB_\Omega} [\vartheta_{n_k}(t)]^\mathrm{ex} \, \mathrm{div} \bigl( [\beta_{n_k}(t)]^\mathrm{ex} +r) \varphi \bigr) \, dx & \parbox{3cm}{ 
               $ \varphi \in C_\mathrm{c}^1(\Omega; \R^N) $, $ |\varphi| \leq 1 $ on $ \bB_\Omega $ 
            }
            \end{array} \right\} \right)  dt
            \\
            & \qquad -\lim_{k \to \infty} \int_I \int_{\bB_\Omega \setminus \overline{\Omega}} ([\beta_{n_k}(t)]^\mathrm{ex} +r) |\nabla [\vartheta_{\Gamma, n_k}(t)]^\mathrm{ex}| \, dx dt
            \\
            & \leq \varliminf_{k \to \infty} \int_I [(\beta_{n_k}(t) +r) |D \vartheta_{n_k}(t)|]_{\vartheta_{\Gamma, n_k}(t)}(\overline{\Omega}) \,dt
             \leq \Phi_* +L_* r, \mbox{ for any $ r > 0 $.}
    \end{align}
    
    Since $ r > 0 $ is arbitrary, the above estimate finishes the proof of this key-lemma. 
\end{proof}
    \begin{cor}\label{Rem.Phi_0^I(bt)}
        Let $ I \subset (0, T) $ be a fixed open interval, and let $ \delta \geq 0 $ be a fixed constant. Let $ \beta \in L^\infty(I; H^1(\Omega)) $ be a function such that $ \log \beta \in L^\infty(Q) $. Then, a functional:
        \begin{align}\label{DefOfPhi_0^I(bt)}
            \Phi_\delta^I(\beta;{}\cdot{}) : \bm{\vartheta} & = [\vartheta, \vartheta_\Gamma] \in L^2(I; H) \mapsto \Phi_\delta^I(\beta; \bm{\vartheta}) = \Phi_\delta^I(\beta; \vartheta, \vartheta_\Gamma) 
            \\
            & := \int_I \Phi_\delta(\beta(t); \bm{\vartheta}(t)) \, dt \in [0, \infty], 
        \end{align}
        is well-defined, and it is proper l.s.c. and convex on $ L^2(I; H) $, with the effective domain:
        \begin{align}
            & D(\Phi_\delta^I(\beta;{}\cdot{})) = \begin{cases}
                \left\{ \begin{array}{l|l}
                    \bm{\tilde{\vartheta}} = [\tilde{\vartheta}, \tilde{\vartheta}_\Gamma] \in L^2(I; L^2(\Omega) \times H^1(\Omega)) & [|D\vartheta|]_{\vartheta_\Gamma}(\overline{\Omega}) \in L^1(I)
                \end{array} \right\}, 
                \\
                \qquad \mbox{if $ \delta = 0 $,}
                \\[2ex]
                L^2(I; V_1) ~ \bigl( = L^2(I; V_\delta) \bigr), \mbox{if $ \delta > 0 $.}
            \end{cases}
         \end{align}
    \end{cor}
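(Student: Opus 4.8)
The plan is to dispatch the three assertions — well-definedness, lower semicontinuity together with convexity, and the identification of the effective domain — in turn, after first recording that $\log\beta\in L^\infty(Q)$ gives two-sided bounds $0<\delta_*\le\beta(t,x)\le|\beta|_{L^\infty(Q)}$ for a.e. $(t,x)\in Q$, hence $\beta(t)\in Y_\mathrm{c}(\Omega)$ for a.e. $t\in I$. Convexity of $\Phi_\delta^I(\beta;{}\cdot{})$ on $L^2(I;H)$ is immediate from the convexity of each $\Phi_\delta(\beta(t);{}\cdot{})$ on $H$ (Remark \ref{Rem.rx-subDiff} when $\delta>0$; \eqref{Phi_gm(bt.)} and Remark \ref{KS-Rem.freeEnergy} when $\delta=0$): one writes the pointwise-in-$t$ convexity inequality and integrates it over $I$. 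Properness is equally clear, since $\Phi_\delta^I(\beta;{}\cdot{})\ge0$ and $\Phi_\delta^I(\beta;[0,0])=0$. Thus the substance lies in the measurability of $t\mapsto\Phi_\delta(\beta(t);\bm\vartheta(t))$, in the lower semicontinuity, and in the domain formula.

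For well-definedness, when $\delta>0$ the integrand $(t,x)\mapsto\beta(t,x)f_\delta(\nabla\vartheta(t,x))+\frac{\delta^2}{2}|\nabla\vartheta(t,x)|^2$ is jointly measurable and nonnegative, $t\mapsto\int_\Gamma|\nabla_\Gamma\vartheta_\Gamma(t)|^2\,d\Gamma$ is measurable, and $\{t\in I:\bm\vartheta(t)\in V_\delta\}$ is measurable since $V_\delta$ is a closed subspace of a separable Hilbert space; Tonelli's theorem then yields measurability of $t\mapsto\Phi_\delta(\beta(t);\bm\vartheta(t))\in[0,\infty]$. When $\delta=0$ the only nontrivial point is measurability of $t\mapsto[\beta(t)|D\vartheta(t)|]_{\vartheta_\Gamma(t)}(\overline{\Omega})$, which is precisely the first conclusion of Key-Lemma \ref{keyLem03-00}, applicable here. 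As for the effective domain: for $\delta>0$ the elementary bounds $f_\delta(\omega)\le|\omega|\le f_\delta(\omega)+\delta$, together with the two-sided bound on $\beta$ and $\L^N(\Omega),\L^1(I)<\infty$, show that finiteness of $\Phi_\delta^I(\beta;{}\cdot{})$ is equivalent to $\nabla\vartheta\in L^2(I\times\Omega;\R^N)$, $\nabla_\Gamma\vartheta_\Gamma\in L^2(I\times\Gamma;\R^N)$ and $\bm\vartheta(t)\in V_\delta$ a.e., i.e. $\bm\vartheta\in L^2(I;V_1)=L^2(I;V_\delta)$; for $\delta=0$ the two-sided bound on $\beta$ gives, via \eqref{Phi_gm(bt.)}, the comparability $\delta_*[|D\vartheta(t)|]_{\vartheta_\Gamma(t)}(\overline{\Omega})\le[\beta(t)|D\vartheta(t)|]_{\vartheta_\Gamma(t)}(\overline{\Omega})\le|\beta|_{L^\infty(Q)}[|D\vartheta(t)|]_{\vartheta_\Gamma(t)}(\overline{\Omega})$ for a.e. $t$, so $\Phi_0^I(\beta;{}\cdot{})$ is finite exactly when $\vartheta_\Gamma\in L^2(I;H^1(\Gamma))$ and $[|D\vartheta(\cdot)|]_{\vartheta_\Gamma(\cdot)}(\overline{\Omega})\in L^1(I)$, matching the stated domain.

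The main obstacle is the lower semicontinuity on $L^2(I;H)$, which I would reduce to Key-Lemma \ref{keyLem03}. Given $\bm\vartheta_n=[\vartheta_n,\vartheta_{\Gamma,n}]\to\bm\vartheta=[\vartheta,\vartheta_\Gamma]$ in $L^2(I;H)$ with $\Phi_*:=\varliminf_n\Phi_\delta^I(\beta;\bm\vartheta_n)<\infty$, I would pass to a subsequence along which $\Phi_\delta^I(\beta;\bm\vartheta_n)\to\Phi_*$; the lower bound $\beta\ge\delta_*$ then forces $\{\vartheta_{\Gamma,n}\}$ bounded in $L^2(I;H^1(\Gamma))$, which with $\vartheta_{\Gamma,n}\to\vartheta_\Gamma$ in $L^2(I;L^2(\Gamma))$ gives $\vartheta_\Gamma\in L^2(I;H^1(\Gamma))$, $\vartheta_{\Gamma,n}\rightharpoonup\vartheta_\Gamma$ weakly in $L^2(I;H^1(\Gamma))$, and — via the interpolation inequality $|u|_{H^{1/2}(\Gamma)}\le C|u|_{H^1(\Gamma)}^{1/2}|u|_{L^2(\Gamma)}^{1/2}$ — strong convergence $\vartheta_{\Gamma,n}\to\vartheta_\Gamma$ in $L^2(I;H^{1/2}(\Gamma))$. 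When $\delta>0$, the $\frac{\delta^2}{2}|\nabla\vartheta_n|^2$ term additionally bounds $\{\vartheta_n\}$ in $L^2(I;H^1(\Omega))$, so $\bm\vartheta\in L^2(I;V_1)$ and $\bm\vartheta_n\rightharpoonup\bm\vartheta$ weakly there; since $\bm\vartheta\mapsto\int_I\int_\Omega(\beta f_\delta(\nabla\vartheta)+\frac{\delta^2}{2}|\nabla\vartheta|^2)\,dx\,dt+\frac{\kappa_\Gamma^2}{2}\int_I\int_\Gamma|\nabla_\Gamma\vartheta_\Gamma|^2\,d\Gamma\,dt$ is convex and strongly continuous — hence weakly lower semicontinuous — on $L^2(I;V_1)$, we obtain $\Phi_*\ge\Phi_\delta^I(\beta;\bm\vartheta)$. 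When $\delta=0$, I would first apply Key-Lemma \ref{keyLem03} with the constant sequence $\beta_n\equiv1$ to conclude $[|D\vartheta(\cdot)|]_{\vartheta_\Gamma(\cdot)}(\overline{\Omega})\in L^1(I)$ (the uniform $L^1(I)$-bound on $[|D\vartheta_n(\cdot)|]_{\vartheta_{\Gamma,n}(\cdot)}(\overline{\Omega})$ needed for hypotheses \eqref{core02a}--\eqref{core02b} comes from $\Phi_0^I(\beta;\bm\vartheta_n)\le C$ and $\beta\ge\delta_*$), and then apply Key-Lemma \ref{keyLem03} with $\beta_n\equiv\beta$ to get $\varliminf_n\int_I[\beta(t)|D\vartheta_n(t)|]_{\vartheta_{\Gamma,n}(t)}(\overline{\Omega})\,dt\ge\int_I[\beta(t)|D\vartheta(t)|]_{\vartheta_\Gamma(t)}(\overline{\Omega})\,dt$; adding the weak lower semicontinuity of $\int_I\int_\Gamma|\nabla_\Gamma\vartheta_{\Gamma,n}|^2$ and using $\varliminf(a_n+b_n)\ge\varliminf a_n+\varliminf b_n$ yields $\Phi_*\ge\Phi_0^I(\beta;\bm\vartheta)$. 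I expect the care-intensive points to be the upgrade of $L^2(I;L^2(\Gamma))$-convergence to $L^2(I;H^{1/2}(\Gamma))$-convergence and the bookkeeping of the hypotheses when invoking Key-Lemma \ref{keyLem03} twice.
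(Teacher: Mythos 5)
Your proposal is correct and takes essentially the same route as the paper, whose entire proof consists of declaring the case $\delta>0$ trivial and reducing the case $\delta=0$ to \eqref{Phi_gm(bt.)} and Key-Lemma \ref{keyLem03}; you merely make explicit the measurability step (Key-Lemma \ref{keyLem03-00}), the identification of the effective domain via the two-sided bound on $\beta$, and the interpolation upgrade from $L^2(I;L^2(\Gamma))$- to $L^2(I;H^{\frac{1}{2}}(\Gamma))$-convergence that the paper leaves implicit. The only caveat is the bookkeeping you yourself flag: Key-Lemma \ref{keyLem03} assumes through \eqref{core02a-00} that the limit pair already satisfies $[|D\vartheta(\cdot)|]_{\vartheta_\Gamma(\cdot)}(\overline{\Omega})\in L^1(I)$, so your first invocation (with $\beta_n\equiv 1$) to \emph{conclude} that membership is formally circular and should instead be carried out by the pointwise-in-$t$ lower semicontinuity plus Fatou argument that underlies that lemma's proof, after which your second invocation (with $\beta_n\equiv\beta$) is legitimate.
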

    \begin{proof}
        When $ \delta = 0 $, This corollary will be a straightforward consequence of \eqref{Phi_gm(bt.)} and Key-Lemma \ref{keyLem03}. Meanwhile, the case when $ \delta > 0 $ can be said trivial. 
    \end{proof}
\noeqref{core02a} 
\begin{keyLem}\label{keyLem04}
    Let $ I \subset (0, T) $ be a fixed open interval. Let $ \beta \in L^\infty(I; H^1(\Omega)) $ and $ \{ \beta_n \}_{n = 1}^\infty \subset L^\infty(I; H^1(\Omega)) $ be as in \eqref{core01}--\eqref{core02b}. Additionally, let us assume that: 
\begin{align}
    \left\{ \rule{0pt}{32pt} \right. & 
    \\[-60pt]
    & \ds \beta \geq \delta_\beta \mbox{ and } \inf_{n \in \N} \beta_n \geq \delta_\beta ~\mbox{ a.e. in $ I \times \Omega $, for some constant $ \delta_\beta > 0 $,}
    \label{kenCore09a}
    \\[1ex]
    & \ds \int_I [\beta_n(t) |D \vartheta_n(t)|]_{\vartheta_{\Gamma,n}(t)}(\overline{\Omega}) \, dt \to \int_I [\beta(t) |D \vartheta(t)|]_{\vartheta_\Gamma(t)}(\overline{\Omega}) \, dt \mbox{ as $ n \to \infty $,} 
    \label{kenCore09b}
\end{align}
$ \varrho \in L^\infty(I; H^1(\Omega)) \cap L^\infty(I \times \Omega) $ and $ \{ \varrho_n \}_{n = 1}^\infty \subset L^\infty(I; H^1(\Omega)) \cap L^\infty(I \times \Omega) $, and:
    \begin{equation}\label{kenCore10}
    M_* := |\varrho|_{L^\infty(I \times \Omega)} \vee \sup_{n \in \N} |\varrho_n|_{L^\infty(I \times \Omega)} < \infty,
\end{equation}
and
    \begin{align}\label{kenCore11}
    \varrho_n(t) \to \varrho(t) ~& \mbox{in $ L^2(\Omega) $, and weakly in $ H^1(\Omega) $, a.e. $ t \in I $, as $ n \to \infty $.}
\end{align}  
Then, it holds that:
    \begin{equation}\label{kenCore20}
    \int_I [\varrho_n(t) |D \vartheta_n(t)|]_{\vartheta_{\Gamma,n}(t)}(\overline{\Omega}) dt \to \int_I [\varrho(t) |D \vartheta(t)|]_{\vartheta_{\Gamma}(t)}(\overline{\Omega}) \, dt \mbox{ as $ n \to \infty $.}
\end{equation}
\end{keyLem}
\begin{proof}
    With Fact \ref{Fact0} in mind, we put:
    \begin{align}
        & \begin{cases}
            \ds A_1 := \int_I [\varrho(t)|D \vartheta(t)|]_{\vartheta_\Gamma(t)}(\overline{\Omega}) \, dt,
            \\[2ex]
            \ds A_2 := \int_I \bigl[ \bigl({\ts \frac{M_*}{\delta_\beta}}\beta(t) -\varrho(t) \bigr)|D \vartheta(t)| \bigr]_{\vartheta_\Gamma(t)}(\overline{\Omega}) \, dt,
        \end{cases}
    \end{align}
    and 
    \begin{align}
        \begin{cases}
            \ds a_n^1 := \int_I [\varrho_n(t)|D \vartheta_n(t)|]_{\vartheta_{\Gamma, n}(t)}(\overline{\Omega}) \, dt,
            \\[2ex]
            \ds a_n^2 := \int_I \bigl[ \bigl({\ts \frac{M_*}{\delta_*}}\beta_n(t) -\varrho_n(t) \bigr)|D \vartheta_n(t)| \bigr]_{\vartheta_{\Gamma, n}(t)}(\overline{\Omega}) \, dt,
        \end{cases}
        \mbox{for $ n = 1, 2, 3, \dots $.}
    \end{align}
    Then, owing to \eqref{core01}--\eqref{core02b}, \eqref{kenCore09a}, \eqref{kenCore10}, and \eqref{kenCore11}, we can apply Key-Lemma \ref{keyLem03}, and can see that:
    \begin{align}\label{kenCore12}
        & \varliminf_{n \to \infty} a_n^k \geq A_k, \mbox{ for $ k = 1, 2 $.}
    \end{align}
    Also, on  account of \eqref{01_[bt|Du|]_gm} and \eqref{kenCore09b}, one can observed that:
    \begin{align}
        \varlimsup_{n \to \infty} (a_n^1 +a_n^2) ~& = \frac{M_*}{\delta_\beta} \lim_{n \to \infty} \int_I [\beta_n(t)|D \vartheta_n(t)|]_{\vartheta_{\Gamma, n}(t)}(\overline{\Omega}) \, dt
        \\
        & = \frac{M_*}{\delta_\beta} \int_I [\beta(t)|D \vartheta(t)|]_{\vartheta_{\Gamma}(t)}(\overline{\Omega}) \, dt = A_1 +A_2.
    \end{align}
    The required convergence \eqref{kenCore20} will be verified  by applying Fact \ref{Fact0} in the case when $ m  =2 $. 
\end{proof}

\begin{keyLem}\label{keyLem05}
    Let $ I \subset (0, T) $ be a fixed open interval. Let $ \delta_0 \geq 0 $ and $ \{ \delta_n \}_{n = 1}^\infty \subset [0, \infty) $ be such that:
    \begin{align}
        & \delta_n \to \delta_0 \mbox{ as $ n \to \infty $.}
    \end{align}
    For a function $ \beta \in L^\infty(I; H^1(\Omega)) $ and a sequence of functions $ \{ \beta_n \}_{n = 1}^\infty \subset L^\infty(I; H^1(\Omega)) $ as in \eqref{core01}, let us assume that:
    \begin{align}\label{Gm-conv01b}
        & \begin{cases}
            \log \beta \in L^\infty(I \times \Omega), ~ \{ \log \beta_n \}_{n = 1}^\infty \subset L^\infty(I \times \Omega),
            \\[0.5ex]
            \ds \sup_{n \in \N} \, \bigl| \log \beta_n \bigr|_{L^\infty(Q)} < \infty.
        \end{cases}
    \end{align}
    Then, a sequence of convex functions $ \{ \Phi_{\delta_n}^I(\beta_n;{}\cdot{}) \}_{n = 1}^\infty $ $ \Gamma $-converges to a convex function $ \Phi_{\delta_0}^I(\beta;{}\cdot{}) $ on $ L^2(I; H) $, as $ n \to \infty $.
\end{keyLem}
\begin{proof}
    First, we consider the positive case of the constant $ \delta_0 $. If $ \delta_0 > 0 $, then the lower-bound condition for $ \{ \Phi_{\delta_n}^I(\beta_n;{}\cdot{}) \}_{n = 1}^\infty $ will be seen from the continuities of $ L^p $-based norms, for $ p \geq 1 $, and the uniform convergence of $ f_{\delta_n} \to f_{\delta_0} $ on $ \R^N $ as $ n \to \infty $. Meanwhile, for any $ \bm{\hat{\vartheta}} = [\hat{\vartheta}, \hat{\vartheta}_\Gamma] \in D(\Phi_{\delta_0}^I(\beta;{}\cdot{})) $, we can take a constant-sequence $ \{ \bm{\hat{\vartheta}}, \bm{\hat{\vartheta}}, \bm{\hat{\vartheta}}, \dots \} $ as the sequence to realize the condition of optimality for  $ \{ \Phi_{\delta_n}^I(\beta_n;{}\cdot{}) \}_{n = 1}^\infty $.

    Next, we consider the case when $ \delta_0 = 0 $. In this case, we can apply Key-Lemma \ref{keyLem03}, and can verify the condition of lower bound for $ \{ \Phi_{\delta_n}^I(\beta_n;{}\cdot{}) \}_{n = 1}^\infty $ as follows:
    \begin{align}
        & \varliminf_{n \to \infty} \Phi_{\delta_n}^I(\beta_n; \bm{\check{\vartheta}}_n) \geq \varliminf_{n \to \infty} \Phi_{0}^I(\beta_n; \bm{\check{\vartheta}}_n) \geq \Phi_0^I(\beta; \bm{\check{\vartheta}}). 
    \end{align}

    Now, our remaining task will be to verify the optimality condition  for $ \{ \Phi_{\delta_n}^I(\beta_n;{}\cdot{}) \}_{n = 1}^\infty $ when $ \delta_0 = 0 $. Let us fix any $ \bm{\hat{\vartheta}} =  [\hat{\vartheta}, \hat{\vartheta}_\Gamma] \in D(\Phi_0^I(\beta;{}\cdot{})) $. Then, invoking Corollary \ref{Rem.Phi_0^I(bt)}, and applying Key-Lemma \ref{keyLem02} to the case when $ \bm{\vartheta} = \bm{\hat{\vartheta}} $ and $ \delta_\Gamma = \kappa_\Gamma > 0 $, we can find a sequence of functions $ \bm{\widetilde{\vartheta}}_\ell = [\widetilde{\vartheta}_\ell, \widetilde{\vartheta}_{\Gamma, \ell}] \in L^\infty(I; V_{\kappa_\Gamma}) $, such that:
    \begin{align}\label{Gm-conv10}
        & \begin{cases}
            \bm{\widetilde{\vartheta}}_\ell \to \bm{\hat{\vartheta}} \mbox{ in $ L^2(I; L^2(\Omega) \times H^1(\Omega)) $,}
            \\[1ex]
            \ds \int_I \left| \int_\Omega |\nabla \widetilde{\vartheta}_\ell(t)| -\int_{\overline{\Omega}} d [|D \hat{\vartheta}_\ell|]_{\hat{\vartheta}_{\Gamma, \ell}} \right| \, dt \to 0,
        \end{cases} 
        \mbox{as $ \ell \to \infty $.}
    \end{align}
    Also, let us take a subsequence $ \{n_\ell\}_{\ell}^\infty \subset \{n\}_{n = 1}^\infty $, such that:
    \begin{align}\label{Gm-conv11}
        & \frac{\delta_{n_\ell}^2}{2} \int_\Omega |\nabla \widetilde{\vartheta}_\ell|^2 \, dx \leq 2^{-\ell}, \mbox{ for $ \ell = 1, 2,3, \dots $.}
    \end{align}
    On this basis, we define a sequence of functional pairs $ \{ \bm{\hat{\vartheta}}_n =  [\hat{\vartheta}_n, \hat{\vartheta}_{\Gamma, n}] \}_{n = 1}^\infty $, by letting:
    \begin{align}\label{Gm-conv13}
        \bm{\hat{\vartheta}}_n = [\hat{\vartheta}_n, \hat{\vartheta}_{\Gamma, n}] ~& := \begin{cases}
            \bm{\widetilde{\vartheta}}_\ell = [\widetilde{\vartheta}_\ell, \widetilde{\vartheta}_{\Gamma, \ell}], \mbox{ if $ n_{\ell} \leq n < n_{\ell +1} $,}
            \\[1ex]
            \bm{\widetilde{\vartheta}}_1 = [\widetilde{\vartheta}_1, \widetilde{\vartheta}_{\Gamma, 1}], \mbox{ if $ 1 \leq n \leq n_1 $.}
        \end{cases}
    \end{align}
\noeqref{Gm-conv10} 
    Here, owing to \eqref{Phi_gm(bt.)}, \eqref{Gm-conv01b}--\eqref{Gm-conv11}, we have:
    \begin{align}\label{Gm-conv12}
        & \begin{cases}
            \bm{\hat{\vartheta}}_n \to \bm{\hat{\vartheta}} \mbox{ in $ L^2(I; L^2(\Omega) \times H^1(\Omega)) $,}
            \\[1ex]
            \delta_n \hat{\vartheta}_{\Gamma, n} \to 0 \mbox{ in $ L^2(I; H^1(\Omega)) $,}
        \end{cases}
        \mbox{as $ n \to \infty $.}
    \end{align}
\noeqref{Gm-conv11, Gm-conv13} 
    Additionally, with \eqref{f_d}, \eqref{Gm-conv10}--\eqref{Gm-conv12} in mind, let us apply Key-Lemma \ref{keyLem04}, by replacing:
    \begin{align}
        & \left\{ \hspace{-3ex} \parbox{9.25cm}{
            \vspace{-2ex}
            \begin{itemize}
                \item $ \{ \beta_n \}_{n = 1}^\infty $ (in Key-Lemma \ref{keyLem04}) by $ \{1, 1, 1, \dots \} $,
                \item $ \{ \varrho_n \}_{n = 1}^\infty $ by $ \{ \beta_n \}_{n = 1}^\infty $ (in this Key-Lemma),
            \end{itemize}
            \vspace{-2ex}
    } \right. \mbox{respectively.}
    \end{align}
    Then, it is observed that:
    \begin{align}
        & \bigl| \Phi_{\delta_n}^I(\beta_n; \bm{\hat{\vartheta}}_n) -\Phi_0^I(\beta; \bm{\hat{\vartheta}}) \bigr|
        \\
        & \leq \int_I \int_\Omega \beta_n(t) \bigl| f_{\delta_n}(\nabla \hat{\vartheta}_n) -|\nabla \hat{\vartheta}_n| \bigr| \, dx dt
        \\
        & \qquad +\left| \int_I \int_\Omega \beta_n(t)|\nabla \hat{\vartheta}_n(t)| \, dt -\int_I \int_{\overline{\Omega}} d [\beta(t)|D \hat{\vartheta}(t)|]_{\hat{\vartheta}_\Gamma(t)} \, dt \right|
        \\
        & \qquad +\frac{\delta_n^2}{2} \int_I \int_\Omega |\nabla \hat{\vartheta}_n|^2 \, dx dt +\frac{\kappa_\Gamma^2}{2} \int_I \left| \int_\Omega |\nabla_\Gamma \bigl( \hat{\vartheta}_{\Gamma, n}|^2 -|\nabla_\Gamma \hat{\vartheta}(t)|^2 \right| d \Gamma
        \\
        & \leq \left| \int_I d [\beta_n(t)|D \hat{\vartheta}_n(t)|]_{\hat{\vartheta}_{\Gamma, n}(t)}(\overline{\Omega}) \, dt -\int_{I} d [\beta(t)|D \hat{\vartheta}(t)|]_{\hat{\vartheta}_\Gamma(t)}(\overline{\Omega}) \, dt \right| 
        \\
        & \qquad +\delta_n \sup_{n \in \N} |\beta_n|_{L^1(I; L^1(\Omega))} +\frac{1}{2} \bigl| \delta_n \hat{\vartheta}_n \bigr|_{L^2(I; H^1(\Omega))}^2 
        \\
        & \qquad +\frac{\kappa_\Gamma^2}{2} |\hat{\vartheta}_{\Gamma, n} -\hat{\vartheta}_\Gamma|_{L^2(I; H^1(\Gamma))} \sup_{n \in \N} \, \bigl( |\hat{\vartheta}_{\Gamma, n}|_{L^2(I; H^1(\Gamma))} +|\hat{\vartheta}_{\Gamma}|_{L^2(I; H^1(\Gamma))}  \bigr)
        \\
        & \to 0, \mbox{ as $ n \to \infty $.}
        \label{Gm-conv20}
    \end{align}
    The convergences \eqref{Gm-conv12} and \eqref{Gm-conv20} suggest that the sequence $ \{ \bm{\hat{\vartheta}}_n = [\hat{\vartheta}_n, \hat{\vartheta}_{\Gamma, n}] \} $ given in \eqref{Gm-conv13} realizes the condition of optimality for $ \{ \Phi_{\delta_n}^I(\beta_n;{}\cdot{}) \}_{n = 1}^\infty $. 

    Thus, we conclude the proof of this Key-Lemma.
\end{proof}
\begin{cor}\label{CorOfGm-conv01}
    Let $ \bm{\eta}_0 = [\eta_0, \eta_{\Gamma, 0}] \in V_\varepsilon \cap \Lambda_0^1 $ and  $ \bm{\theta}_0 = [\theta_0, \theta_{\Gamma, 0}] \in W \cap \Lambda_{r_0}^{r_1} $ be as in the assumption (A5). Then, there exists a sequence of functions $ \{ \bm{\theta}_0^\delta = [\theta_0^{\delta}, \theta_{\Gamma, 0}^\delta] \}_{\delta \in (0, 1]} \subset V_1 $, such that:
    \begin{align}\label{Gm-conv100}
        & \bigl\{ \bm{\theta}_0^\delta = [\theta_0^{\delta}, \theta_{\Gamma, 0}^\delta] \bigr\}_{\delta \in (0, 1]} \subset \Lambda_{r_0}^{r_1}, 
    \end{align}
    and
    \begin{align}\label{Gm-conv101}
        & \begin{cases}
            \bm{\theta}_0^\delta \to \bm{\theta}_0 \mbox{ in $ L^2(\Omega) \times H^1(\Gamma) $,}
            \\[1ex]
            \Phi_{\delta}(\alpha(\eta_0); \bm{\theta}_0^\delta) \to \Phi_0(\alpha(\eta_0); \bm{\theta}_0),
        \end{cases}
        \mbox{as $ \delta \downarrow 0 $.}
    \end{align}
\end{cor}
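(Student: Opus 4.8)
The plan is to read this corollary as a constrained instance of the optimality condition in $\Gamma$-convergence for the family $\{\Phi_\delta(\alpha(\eta_0);{}\cdot{})\}_{\delta\in(0,1]}$ at the single point $\bm\theta_0$, building the recovery family from the $H^1$-approximants supplied by Fact~\ref{Fact4}, forcing them into the box $\Lambda_{r_0}^{r_1}$ by truncation, and absorbing the relaxation term $\frac{\delta^2}{2}\int_\Omega|\nabla\theta|^2\,dx$ by a diagonal choice of the approximation index.

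First I would freeze the boundary component: set $\theta_{\Gamma,0}^\delta:=\theta_{\Gamma,0}$ for every $\delta\in(0,1]$. By (A5), $\theta_{\Gamma,0}\in H^1(\Gamma)$ with $r_0\le\theta_{\Gamma,0}\le r_1$ a.e.\ on $\Gamma$, so this component automatically lies in $\Lambda_{r_0}^{r_1}$, is constant in $\delta$, and contributes the same term $\frac{\kappa_\Gamma^2}{2}\int_\Gamma|\nabla_\Gamma\theta_{\Gamma,0}|^2\,d\Gamma$ to $\Phi_\delta(\alpha(\eta_0);{}\cdot{})$ and to $\Phi_0(\alpha(\eta_0);{}\cdot{})$, cf.\ \eqref{DefOfPhi_delta} and \eqref{Phi_ep}. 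Moreover, since $\eta_0\in H^1(\Omega)$ with $0\le\eta_0\le1$ a.e.\ and $\alpha\in C^2$ with $\alpha\ge\delta_\alpha>0$ by (A3)--(A4), the chain rule gives $\alpha(\eta_0)\in H^1(\Omega)\cap L^\infty(\Omega)$ and $\log\alpha(\eta_0)\in L^\infty(\Omega)$, i.e.\ $\alpha(\eta_0)\in Y_\mathrm{c}(\Omega)$. Thus the statement reduces to producing $\{\theta_0^\delta\}_{\delta\in(0,1]}\subset H^1(\Omega)$ with $\theta_0^\delta\trace{\Gamma}=\theta_{\Gamma,0}$ a.e.\ on $\Gamma$ and $r_0\le\theta_0^\delta\le r_1$ a.e.\ in $\Omega$ (so that $[\theta_0^\delta,\theta_{\Gamma,0}]\in V_1$, on which $\Phi_\delta(\alpha(\eta_0);{}\cdot{})$ is finite), such that $\theta_0^\delta\to\theta_0$ in $L^2(\Omega)$ and
\[
\int_\Omega\alpha(\eta_0)\,f_\delta(\nabla\theta_0^\delta)\,dx+\frac{\delta^2}{2}\int_\Omega|\nabla\theta_0^\delta|^2\,dx\ \longrightarrow\ \mathrm{Var}_{\theta_{\Gamma,0}}^{\alpha(\eta_0)}(\theta_0)\qquad\text{as }\delta\downarrow0,
\]
the right-hand side being precisely the interior part $\int_{\overline\Omega}d\,[\alpha(\eta_0)|D\theta_0|]_{\theta_{\Gamma,0}}$ of $\Phi_0(\alpha(\eta_0);\bm\theta_0)$, which is finite since $\bm\theta_0\in W$.

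Next I would apply Fact~\ref{Fact4} with $\beta=\alpha(\eta_0)$, $\gamma=\theta_{\Gamma,0}$, $u=\theta_0$, obtaining $\{u_i\}_{i=1}^\infty\subset H^1(\Omega)$ with $u_i\trace{\Gamma}=\theta_{\Gamma,0}$ a.e.\ on $\Gamma$, $u_i\to\theta_0$ in $L^2(\Omega)$, and $\int_\Omega\alpha(\eta_0)|\nabla u_i|\,dx\to\mathrm{Var}_{\theta_{\Gamma,0}}^{\alpha(\eta_0)}(\theta_0)$, and then replace each $u_i$ by its truncation $\widetilde u_i:=(u_i\vee r_0)\wedge r_1\in H^1(\Omega)$. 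Because $r_0\le\theta_0\le r_1$ a.e.\ in $\Omega$ and $r_0\le\theta_{\Gamma,0}\le r_1$ a.e.\ on $\Gamma$, this keeps the trace ($\widetilde u_i\trace{\Gamma}=\theta_{\Gamma,0}$), is $1$-Lipschitz so $\widetilde u_i\to\theta_0$ in $L^2(\Omega)$, and satisfies $|\nabla\widetilde u_i|\le|\nabla u_i|$ a.e.; hence $\varlimsup_i\int_\Omega\alpha(\eta_0)|\nabla\widetilde u_i|\,dx\le\mathrm{Var}_{\theta_{\Gamma,0}}^{\alpha(\eta_0)}(\theta_0)$, while $\int_\Omega\alpha(\eta_0)|\nabla\widetilde u_i|\,dx=\mathrm{Var}_{\theta_{\Gamma,0}}^{\alpha(\eta_0)}(\widetilde u_i)$ (the boundary defect vanishes) together with the $L^2$-lower semicontinuity of $\mathrm{Var}_{\theta_{\Gamma,0}}^{\alpha(\eta_0)}$ provides the matching lower bound, so $\int_\Omega\alpha(\eta_0)|\nabla\widetilde u_i|\,dx\to\mathrm{Var}_{\theta_{\Gamma,0}}^{\alpha(\eta_0)}(\theta_0)$. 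After this replacement we may assume, in addition, $r_0\le u_i\le r_1$ a.e.\ in $\Omega$, with all earlier convergences preserved.

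The last step---the only delicate one---is the diagonal choice of the index. Each $\|\nabla u_i\|_{L^2(\Omega)}$ is finite but may tend to $\infty$ (as $\theta_0$ need not lie in $H^1(\Omega)$), so one must slave the index to $\delta$: for each $n\in\N$ pick $\sigma_n\in(0,1]$, strictly decreasing with $\sigma_n\downarrow0$, such that $\sigma_n^2\,\|\nabla u_n\|_{L^2(\Omega)}^2\le 1/n$, and set $\theta_0^\delta:=u_n$ for $\delta\in(\sigma_{n+1},\sigma_n]$. As $\delta\downarrow0$ the index $n=n(\delta)\to\infty$, whence $\theta_0^\delta\to\theta_0$ in $L^2(\Omega)$, $r_0\le\theta_0^\delta\le r_1$ a.e., $\theta_0^\delta\trace{\Gamma}=\theta_{\Gamma,0}$, and $\int_\Omega\alpha(\eta_0)|\nabla\theta_0^\delta|\,dx\to\mathrm{Var}_{\theta_{\Gamma,0}}^{\alpha(\eta_0)}(\theta_0)$, while $\frac{\delta^2}{2}\int_\Omega|\nabla\theta_0^\delta|^2\,dx\le\frac{\sigma_n^2}{2}\|\nabla u_n\|_{L^2(\Omega)}^2\le\frac{1}{2n}\to0$. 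Since $0\le|\omega|-f_\delta(\omega)\le\delta$ for all $\omega\in\R^N$ by \eqref{f_d}, one also gets $\bigl|\int_\Omega\alpha(\eta_0)\bigl(f_\delta(\nabla\theta_0^\delta)-|\nabla\theta_0^\delta|\bigr)\,dx\bigr|\le\delta\,|\alpha(\eta_0)|_{L^1(\Omega)}\to0$. Adding the fixed boundary term then yields $\Phi_\delta(\alpha(\eta_0);\theta_0^\delta,\theta_{\Gamma,0})\to\mathrm{Var}_{\theta_{\Gamma,0}}^{\alpha(\eta_0)}(\theta_0)+\frac{\kappa_\Gamma^2}{2}\int_\Gamma|\nabla_\Gamma\theta_{\Gamma,0}|^2\,d\Gamma=\Phi_0(\alpha(\eta_0);\bm\theta_0)$, which together with the already-checked constraint \eqref{Gm-conv100} establishes \eqref{Gm-conv101}. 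The main obstacle is precisely this balancing act---making the vanishing weight $\delta^2$ outpace the possible blow-up of the Dirichlet energy of the Fact~\ref{Fact4}-approximants, while the $L^2$- and total-variation convergences survive the diagonalization.
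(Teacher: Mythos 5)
Your argument is correct and follows essentially the same route as the paper's proof: approximants from Fact \ref{Fact4} with trace $\theta_{\Gamma,0}$, truncation by $({}\cdot{}\vee r_0)\wedge r_1$ to enforce \eqref{Gm-conv100}, a choice of $\delta$ slaved to the approximation index so that $\delta^2\int_\Omega|\nabla\theta_0^\delta|^2\,dx\to0$, and the bound $0\le|\omega|-f_\delta(\omega)\le\delta$ to identify the limit of the weighted term, with the boundary component frozen at $\theta_{\Gamma,0}$. The only cosmetic omission is to also define $\theta_0^\delta$ (say $:=u_1$) for $\delta\in(\sigma_1,1]$ so the whole family $\delta\in(0,1]$ is covered, exactly as the paper does.
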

\begin{proof}
With Fact \ref{Fact4}  in mind, we can find an approximating sequence $ \{ \widetilde{u}_\ell \}_{\ell = 1}^\infty \subset H^1(\Omega) $, 
    such that: 
    \begin{align}\label{Gm-conv31-00}
        & \widetilde{u}_\ell = \theta_{\Gamma, 0}, \mbox{ a.e. on $ \Gamma $, for $ \ell = 1, 2, 3, \dots $,}
    \end{align}
    \vspace{-1ex}
    and
    \begin{align}
        & \begin{cases}
            \widetilde{u}_\ell \to \theta_0 \mbox{ in $ L^2(\Omega) $,} 
            \\[1ex]
            \ds \int_\Omega \alpha(\eta_0) |\nabla \widetilde{u}_\ell| \, dx = \int_{\overline{\Omega}} d [\alpha(\eta_0) |D \widetilde{u}_\ell|]_{\theta_{\Gamma, 0}} 
            \\[1ex]
            \hspace{4ex}\ds\to \int_{\overline{\Omega}} d [\alpha(\eta_0) |D \theta_0|]_{\theta_{\Gamma, 0}}, \mbox{ ~as $ \ell \to \infty $.}
        \end{cases}
        \label{Gm-conv31}
    \end{align}
    Besides, for any $ \ell \in \N $, we let:
    \begin{align}
        & \widetilde{\theta}_\ell := (\widetilde{u}_\ell \vee r_0) \wedge r_1 \mbox{ a.e. in $ \Omega $,}
        \label{Gm-conv33}
    \end{align}
    and let $ \widetilde{\delta}_\ell \in [0, 1) $ be a constant such that:
    \begin{align}
        & \begin{cases}
            0 < \widetilde{\delta}_{\ell +1} <  \widetilde{\delta}_{\ell}  \leq 2^{-\ell},
            \\[1.5ex]
            \ds \frac{\widetilde{\delta}_\ell^2}{2} \int_\Omega |\nabla \widetilde{\theta}_\ell|^2 \, dx \leq 2^{-\ell}, \mbox{ for $ \ell = 1, 2, 3, \dots $.}
        \end{cases}
        \label{Gm-conv32}
    \end{align}
    Then, from \eqref{f_d}, \eqref{Gm-conv33}, \eqref{Gm-conv32}, and Fact \ref{Fact4}, one can deduce that:
    \begin{align}
        & \Phi_0(\alpha(\eta_0); \bm{\theta}_0) \leq  \varliminf_{\ell \to \infty} \Phi_0(\alpha(\eta_0); \widetilde{\theta}_\ell, \theta_{\Gamma, 0})
        \\
        & \leq  \varliminf_{\ell \to \infty} \Phi_{\widetilde{\delta}_\ell}(\alpha(\eta_0); \widetilde{\theta}_\ell, \theta_{\Gamma, 0}) \leq \varlimsup_{\ell \to \infty} \Phi_{\widetilde{\delta}_\ell}(\alpha(\eta_0); \widetilde{\theta}_\ell, \theta_{\Gamma, 0})
        \\
        & \leq \varlimsup_{\ell \to \infty} \int_\Omega \alpha(\eta_0) f_{\widetilde{\delta}_\ell}(\nabla \widetilde{u}_\ell) \, dx +\lim_{\ell \to \infty} \frac{\widetilde{\delta}_\ell^2}{2} \int_\Omega |\nabla \widetilde{\theta}_\ell|^2 \, dx +\frac{\kappa_\Gamma^2}{2} \int_\Gamma |\nabla_\Gamma \theta_{\Gamma, 0}|^2  \, d \Gamma
        \\
        & \leq \lim_{\ell \to \infty} \left( \int_\Omega \alpha(\eta_0) |\nabla \widetilde{u}_\ell| \, dx +\frac{\kappa_\Gamma^2}{2} \int_\Gamma |\nabla_\Gamma \theta_{\Gamma, 0}|^2  \, d \Gamma \right) +|\alpha(\eta_0)|_{L^1(\Omega)} \lim_{\ell \to \infty} \widetilde{\delta}_\ell 
        \\
        & = \Phi_0(\alpha(\eta_0); \bm{\theta}_0).
        \label{Gm-conv40}
    \end{align}
\noeqref{Gm-conv31, Gm-conv33, Gm-conv32}
    On account of \eqref{Gm-conv31-00}--\eqref{Gm-conv40}, we will conclude that a sequence $ \{ \bm{\theta}_0^\delta = [\theta_0^\delta, \theta_{\Gamma,0}^{\delta}] \}_{\delta \geq 0 } \subset V_1 $, defined as:
    \begin{align}
        \bm{\theta}_0^\delta = [\theta_0^\delta, \theta_{\Gamma, 0}^{\delta}] := & \left\{ \begin{array}{ll}
            [\widetilde{\theta}_\ell, \theta_{\Gamma, 0}], & \mbox{if $ 2^{-\ell -1} < \delta \leq 2^{-\ell}  $, with some $ \ell \in \N $,}
            \\[1ex]
            {[\widetilde{\theta}_1, \theta_{\Gamma, 0}]}, & \mbox{otherwise,}
        \end{array} \right.
        \\[1ex]
        & \hspace{10ex}\mbox{in $ V_1 $, for all $ \delta \geq 0 $,}
    \end{align}
    will be the required sequence that achieves \eqref{Gm-conv100} and \eqref{Gm-conv101}.
\end{proof}

\section{Time-discretization}

In this paper, the solution to (KWC)$_{\varepsilon}$, for any $\varepsilon\ge0$, will be obtained by means of the time-discretization methods. On this basis, we assume (A1)--(A5), and for every $\delta>0$ and $0<\tau \le 1$, we consider the following time-discretization scheme, denoted by (AP)$_{\varepsilon}^{\delta}$.
\smallskip 

(AP)$_{\varepsilon}^{\delta}$:
\begin{align}
 & \begin{aligned}
  &\frac{1}{\tau}\bigl(\bm{\eta}_i-\bm{\eta}_{i-1}+\bm{g}(\bm{\eta}),\bm{\varphi}\bigr)_{H}+\kappa^2\bigl(\nabla\eta_i,\nabla\varphi\bigr)_{[L^2(\Omega)]^N}+\bigl(\alpha'(\eta_i)f_\delta(\nabla\theta_{i}),\varphi\bigr)_{L^2(\Omega)} 
  \\[1ex]
  &\qquad +\bigl(\nabla_\Gamma(\varepsilon\eta_{\Gamma,i}),\nabla_\Gamma(\varepsilon\varphi_\Gamma)\bigr)_{[L^2(\Gamma)]^N}=0, \mbox{ for any $\bm{\varphi}=[\varphi,\varphi_\Gamma]\in V_\varepsilon$,}
 \end{aligned}\label{AP_eta}
\\[1ex]
& \begin{aligned}
  &\frac{1}{\tau}\bigl(A_0(\bm{\eta}_{i-1})(\bm{\theta}_i-\bm{\theta}_{i-1}),\bm{\psi} \bigr)_{H}+\bigl(\delta^2\nabla\theta_i+\alpha(\eta_{i-1})\nabla f_\delta(\nabla\theta_i),\nabla\psi \bigr)_{[L^2(\Omega)]^N}%
    \\[1ex]
  &\qquad +\kappa_\Gamma^2\bigl(\nabla_\Gamma \theta_{\Gamma,i},\nabla_\Gamma \psi_{\Gamma}\bigr)_{[L^2(\Gamma)]^N}= 0, \mbox{ for any $\bm{\psi}=[\psi,\psi_\Gamma]\in V_\delta$,}
  \end{aligned}\label{AP_t^d}
\end{align}

for $i=1,2,3,\ldots$, starting from the initial data: 
\begin{equation}\label{IC.t-Dis}
    \bm{\eta}_0=[\eta_0,\eta_{\Gamma,0}]\in V_\varepsilon\cap \Lambda_0^1, \mbox{ and } \bm{\theta}_0=[\theta_0,\theta_{\Gamma,0}] \in V_\delta \cap\Lambda_{r_0}^{r_1} \bigl( \subset W \cap \Lambda_{r_0}^{r_1} \bigr) \mbox{ in $H$.}
\end{equation}

\begin{defn}[Solution to (AP)$_{\varepsilon}^{\delta}$]\label{Def.t-Dis} Let us fix any $\varepsilon\ge0$ and $\delta>0$. Then, a sequence of quartet $\{[\bm{\eta}_i,\bm{\theta}_i]\}_{i=0}^\infty=\{[\eta_i,\eta_{\Gamma,i}, \theta_{i}, \theta_{\Gamma,i}]\}_{i=0}^\infty \subset [H]^2$, with $\{\bm{\eta}_i\}_{i=0}^\infty=\{[\eta_i,\eta_{\Gamma,i}]\}_{i=0}^\infty\subset H$ and $\{\bm{\theta}_i\}_{i=0}^\infty=\{[\theta_i,\theta_{\Gamma,i}]\}_{i=0}^\infty\subset H$ is called a solution to the time-discretization scheme (AP)$_{\varepsilon}^{\delta}$ iff. $\{[\bm{\eta}_i,\bm{\theta}_i]\}_{i=0}^\infty=\{[\eta_{i},\eta_{\Gamma,i}, \theta_{i}, \theta_{\Gamma,i}]\}_{i=0}^{\infty} \subset V_\varepsilon \times V_\delta$, and this sequence fulfills \eqref{AP_eta} and \eqref{AP_t^d}.
\end{defn}

Now, the objective of this section is to prove the following Theorem.

\begin{thm}[Solvability and energy estimate for (AP)$_{\varepsilon}^{\delta}$]\label{Th.t-Dis}
    There exists a small positive \linebreak constant $0<\tau_*<1$, such that for any $0<\tau<\tau_*$, the time-discretization scheme (AP)$_{\varepsilon}^{\delta}$ admits a unique solution $\{[\bm{\eta}_i,\bm{\theta}_i]\}_{i=0}^\infty=\{[\eta_{i},\eta_{\Gamma,i}, \theta_{i}, \theta_{\Gamma,i}]\}_{i=0}^\infty \subset [H]^2 $, such that:
    \begin{align}\label{Ener.ineq00}
        & [\bm{\eta}_i,\bm{\theta}_i] \in (V_\varepsilon \cap \Lambda_0^1) \times (V_\delta\cap \Lambda_{r_0}^{r_1}),
    \end{align}
    \vspace{-4ex}

    and
    \begin{align}
  \frac{1}{2\tau}\bigl|\bm{\eta}_i-\bm{\eta}_{i-1}\bigr|_H^2+
        \frac{1}{2\tau}\bigl|A_0(\bm{\eta}_{i -1})^{\frac12}(\bm{\theta}_{i} -\bm{\theta}_{i-1})\bigr|_H^2+\mathscr{F}_{\varepsilon}^{\delta}(\bm{\eta}_i,\bm{\theta}_i) \le \mathscr{F}_{\varepsilon}^{\delta}(\bm{\eta}_{i-1},\bm{\theta}_{i-1}), \label{Ener.ineq}
\end{align}
for $i=1,2,3\dots$, where $\mathscr{F}_{\varepsilon}^{\delta}$ is a relaxed free-energy, defined as:
\begin{equation}\label{reg.Ener}
  \begin{aligned}\relax
[\bm{\eta},\bm{\theta}]&=[\eta,\eta_\Gamma,\theta,\theta_\Gamma] \in [H]^2 \mapsto \mathscr{F}_{\varepsilon}^{\delta}(\bm{\eta},\bm{\theta})=\mathscr{F}_{\varepsilon}^{\delta}(\eta,\eta_\Gamma,\theta,\theta_\Gamma) 
\\
&:=\Psi_{\Omega,\Gamma}(\kappa\eta,\varepsilon\eta_\Gamma)+G(\eta,\eta_\Gamma)+\Phi_\delta(\alpha(\eta);\theta,\theta_\Gamma)\in [0,\infty].
  \end{aligned}
\end{equation}
\end{thm}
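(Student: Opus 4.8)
The plan is to build the sequence $\{[\bm{\eta}_i,\bm{\theta}_i]\}_{i=0}^\infty$ recursively: once $[\bm{\eta}_{i-1},\bm{\theta}_{i-1}]\in(V_\varepsilon\cap\Lambda_0^1)\times(V_\delta\cap\Lambda_{r_0}^{r_1})$ is at hand, we first solve \eqref{AP_t^d} for $\bm{\theta}_i$ and then \eqref{AP_eta} for $\bm{\eta}_i$ — a decoupling that works because \eqref{AP_t^d} involves only $\bm{\eta}_{i-1}$, not $\bm{\eta}_i$. Throughout, $\tau_*>0$ will be fixed as a quantity depending only on the Lipschitz constant $L_{\bm{g}}$ of $\bm{g}$, small enough that $\tfrac1\tau-\tfrac{L_{\bm{g}}}{2}\ge\tfrac1{2\tau}$ for $0<\tau<\tau_*$. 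For the $\bm{\theta}$-step, note that (A3)--(A4) together with $\eta_{i-1}\in H^1(\Omega)\cap\Lambda_0^1$ give $\alpha(\eta_{i-1})\in Y_{\mathrm{c}}(\Omega)$ (boundedness of $\alpha,\alpha'$ on $[0,1]$ and $\alpha\ge\delta_\alpha>0$), so by Remark \ref{Rem.rx-subDiff} the left-hand operator of \eqref{AP_t^d} is exactly $\partial\Phi_\delta(\alpha(\eta_{i-1});{}\cdot{})$. Since, by (A2)--(A4), $A_0(\bm{\eta}_{i-1})$ is a bounded self-adjoint multiplication operator on $H$ with $A_0(\bm{\eta}_{i-1})\ge\delta_\alpha\,\mathcal{I}_H$, solving \eqref{AP_t^d} is equivalent to minimizing over $H$ the strictly convex, coercive, proper l.s.c. functional
\[
\bm{\vartheta}\in H\longmapsto \tfrac1{2\tau}\bigl|A_0(\bm{\eta}_{i-1})^{\frac12}(\bm{\vartheta}-\bm{\theta}_{i-1})\bigr|_H^2+\Phi_\delta(\alpha(\eta_{i-1});\bm{\vartheta}),
\]
which admits a unique minimizer $\bm{\theta}_i\in D(\partial\Phi_\delta(\alpha(\eta_{i-1});{}\cdot{}))\subset V_\delta\subset W$; its Euler--Lagrange identity is \eqref{AP_t^d}.

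To obtain $\bm{\theta}_i\in\Lambda_{r_0}^{r_1}$, compare the minimizer with its componentwise truncation $T\bm{\theta}_i:=(\bm{\theta}_i\vee r_0)\wedge r_1$: since $\bm{\theta}_{i-1}\in\Lambda_{r_0}^{r_1}$, truncation does not increase $\bigl|A_0(\bm{\eta}_{i-1})^{\frac12}({}\cdot{}-\bm{\theta}_{i-1})\bigr|_H$, and because $f_\delta$ in \eqref{f_d} is nondecreasing in $|{}\cdot{}|$, $\alpha(\eta_{i-1})\ge0$, and truncation pointwise does not increase $|\nabla({}\cdot{})|$ or $|\nabla_\Gamma({}\cdot{})|$ (and preserves membership in $V_\delta$), it does not increase $\Phi_\delta(\alpha(\eta_{i-1});{}\cdot{})$; uniqueness of the minimizer then forces $T\bm{\theta}_i=\bm{\theta}_i$. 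For the $\bm{\eta}$-step, set $w:=f_\delta(\nabla\theta_i)\in L^\infty(\Omega)$ with $w\ge0$. Since $\alpha$ is convex with $\alpha'(0)=0$, the map $\eta\mapsto\alpha'(\eta)w$ is monotone; after replacing $\alpha$ by a globally Lipschitz $C^1$-modification coinciding with $\alpha$ on a neighbourhood of $[0,1]$ (harmless, by the a posteriori bound below), equation \eqref{AP_eta} becomes a bounded, demicontinuous, coercive, and — for $\tau<\tau_*$ — strongly monotone operator equation on $V_\varepsilon$, uniquely solvable by the Browder--Minty theorem (equivalently, a convex-functional minimization on $V_\varepsilon$ when $\bm{g}$ is taken explicitly). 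The bounds $\eta_i,\eta_{\Gamma,i}\in[0,1]$ follow by testing \eqref{AP_eta} with the pair of positive parts $[\eta_i-1]^+$ on $\Omega$, $[\eta_{\Gamma,i}-1]^+$ on $\Gamma$ (which lies in $V_\varepsilon$), and with $-[\eta_i]^-$, $-[\eta_{\Gamma,i}]^-$: by (A1), by $w\ge0$ together with $\alpha'$ nondecreasing and $\alpha'(0)=0$, and by $\tau<\tau_*\le1/L_{\bm{g}}$ (which keeps the $\bm{g}$-perturbation of $\bm{\eta}_{i-1}$ inside $[0,1]$, resp. absorbs the Lipschitz term), every lower-order contribution carries the sign forcing $[\eta_i-1]^+=0=[\eta_i]^-$ a.e., and likewise on $\Gamma$. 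On $\Lambda_0^1$ the modified $\alpha$ equals $\alpha$, so $\bm{\eta}_i$ solves the original \eqref{AP_eta}, and \eqref{Ener.ineq00} holds.

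Finally, for the energy estimate, test \eqref{AP_eta} with $\bm{\varphi}=\bm{\eta}_i-\bm{\eta}_{i-1}\in V_\varepsilon$ and \eqref{AP_t^d} with $\bm{\psi}=\bm{\theta}_i-\bm{\theta}_{i-1}\in V_\delta$ and add. Using $a\cdot(a-b)\ge\tfrac12(|a|^2-|b|^2)$ on the Dirichlet terms, convexity of $\alpha$ to get $\alpha'(\eta_i)(\eta_i-\eta_{i-1})\ge\alpha(\eta_i)-\alpha(\eta_{i-1})$ (integrated against $w\ge0$), convexity of $f_\delta$ to get $\nabla f_\delta(\nabla\theta_i)\cdot\nabla(\theta_i-\theta_{i-1})\ge f_\delta(\nabla\theta_i)-f_\delta(\nabla\theta_{i-1})$ (integrated against $\alpha(\eta_{i-1})\ge0$), and the bound $G(\bm{\eta}_i)-G(\bm{\eta}_{i-1})=\int_0^1\bigl(\bm{g}(\bm{\eta}_{i-1}+s(\bm{\eta}_i-\bm{\eta}_{i-1})),\bm{\eta}_i-\bm{\eta}_{i-1}\bigr)_H\,ds$ with Lipschitz continuity to dominate the $\bm{g}$-contribution from below by $G(\bm{\eta}_i)-G(\bm{\eta}_{i-1})-\tfrac{L_{\bm{g}}}{2}|\bm{\eta}_i-\bm{\eta}_{i-1}|_H^2$, the two tested identities combine into
\[
\Bigl(\tfrac1\tau-\tfrac{L_{\bm{g}}}{2}\Bigr)\bigl|\bm{\eta}_i-\bm{\eta}_{i-1}\bigr|_H^2+\tfrac1\tau\bigl|A_0(\bm{\eta}_{i-1})^{\frac12}(\bm{\theta}_i-\bm{\theta}_{i-1})\bigr|_H^2+\mathscr{F}_\varepsilon^\delta(\bm{\eta}_i,\bm{\theta}_i)\le\mathscr{F}_\varepsilon^\delta(\bm{\eta}_{i-1},\bm{\theta}_{i-1}),
\]
the decisive point being that the cross term $\int_\Omega\alpha(\eta_{i-1})f_\delta(\nabla\theta_i)\,dx$ created by the two tests cancels and, together with the $\delta^2$- and $\kappa_\Gamma^2$-terms, reassembles as $\Phi_\delta(\alpha(\eta_i);\bm{\theta}_i)-\Phi_\delta(\alpha(\eta_{i-1});\bm{\theta}_{i-1})$; the choice of $\tau_*$ then yields \eqref{Ener.ineq}. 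I expect the two genuine obstacles to be (i) the solvability of the $\bm{\eta}$-step, since $\alpha'(\eta_i)f_\delta(\nabla\theta_i)$ is only $C^1$ in $\eta_i$ with no growth control, so one must exploit its monotone/convex structure and justify the cutoff of $\alpha$ a posteriori through the maximum principle; and (ii) the precise bookkeeping in the energy identity — the exact cancellation of $\int_\Omega\alpha(\eta_{i-1})f_\delta(\nabla\theta_i)\,dx$ and the absorption of $\tfrac{L_{\bm{g}}}{2}|\bm{\eta}_i-\bm{\eta}_{i-1}|_H^2$ into $\tfrac1{2\tau}|\bm{\eta}_i-\bm{\eta}_{i-1}|_H^2$, which is what pins down the admissible range $0<\tau<\tau_*$.
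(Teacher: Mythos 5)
Your argument is correct and is essentially the paper's own proof: the same decoupled order (solve \eqref{AP_t^d} with the frozen $\bm{\eta}_{i-1}$, then \eqref{AP_eta} with the new $\theta_i$), the same convexity-based unique solvability of each one-step problem with $\tau_*$ fixed by the Lipschitz constant of $\bm{g}$ (this is Lemma \ref{Lem04}), constant barriers/truncations yielding \eqref{Ener.ineq00} (the paper packages exactly this as the comparison Lemmas \ref{Lem02}--\ref{Lem03} applied with $[0,0]$, $[1,1]$, $[r_0,r_0]$, $[r_1,r_1]$), and the identical energy computation based on testing with the differences, convexity of $\alpha$ and $f_\delta$, Taylor's theorem for $G$, and the cancellation of the cross term $\int_\Omega\alpha(\eta_{i-1})f_\delta(\nabla\theta_i)\,dx$. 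The only deviations are harmless: $f_\delta(\nabla\theta_i)$ lies merely in $L^2(\Omega)$, not $L^\infty(\Omega)$ (since $\theta_i\in H^1(\Omega)$ only), but none of your uses ($L^2$-pairings, monotonicity, sign/truncation arguments) needs more; and your Lipschitz cutoff of $\alpha$ plus Browder--Minty (with uniqueness for the original \eqref{AP_eta} recovered from the monotonicity of $\eta\mapsto\alpha'(\eta)f_\delta(\nabla\theta_i)$, or from the a priori bound $\bm{\eta}_i\in\Lambda_0^1$) is an admissible substitute for the paper's direct strictly convex minimization in Lemma \ref{Lem04}.
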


For the proof of Theorem \ref{Th.t-Dis}, we prepare some Lemmas.

\begin{lem}\label{Lem04}
  Under the assumptions (A1)--(A5), let us fix a quartet of functions $[\overline{\bm{\eta}}_0,\overline{\bm{\theta}}_0]=[\overline{\eta}_0,\overline{\eta}_{\Gamma,0},\overline{\theta}_0,\overline{\theta}_{\Gamma,0}] \in [H]^2$, with $\overline{\bm{\eta}}_0=[\overline{\eta}_0,\overline{\eta}_{\Gamma,0}]$ and $\overline{\bm{\theta}}_0=[\overline{\theta}_0,\overline{\theta}_{\Gamma,0}]\in H$. Then, the following items hold.
  \begin{description}
      \item[(I)] There exists a small positive constant $ \tau_* \in (0, 1)$, such that if $ \tau \in (0, \tau_*) $, then the following variational identity: 
    \begin{equation}   
    \begin{aligned}
      &\frac{1}{\tau}\bigl(\bm{\eta}-\overline{\bm{\eta}}_0 +\bm{g}(\bm{\eta}),\bm{\varphi}\bigr)_{H}+\kappa^2\bigl(\nabla\eta,\nabla\varphi\bigr)_{[L^2(\Omega)]^N}+\bigl(\alpha'(\eta)f_\delta(\nabla\tilde{\theta}_0),\varphi\bigr)_{L^2(\Omega)}
      \\
      &\qquad +\bigl(\nabla_\Gamma(\varepsilon\eta_{\Gamma}),\nabla_\Gamma(\varepsilon\varphi_\Gamma)\bigr)_{[L^2(\Gamma)]^N} =0, \mbox{ \ for any $\bm{\varphi}=[\varphi,\varphi_\Gamma]\in V_\varepsilon$},
     \end{aligned}\label{Lem04-101}
    \end{equation}
    admits a unique solution $\bm{\eta}=[\eta,\eta_\Gamma] \in V_\varepsilon$.
  
  \item[(II)] The following variational identity:
  \begin{equation}
    \begin{aligned}
    &\frac{1}{\tau}\bigl(A_0(\overline{\bm{\eta}}_0)(\bm{\theta}-\overline{\bm{\theta}}_0),\bm{\psi}\bigr)_{H}+\bigl(\delta^2\nabla\theta+\alpha(\overline{\eta}_{0})\nabla f_\delta(\nabla\theta),\nabla\psi\bigr)_{[L^2(\Omega)]^N}%
      \\
    &\qquad +\kappa_\Gamma^2\bigl(\nabla_\Gamma \theta_{\Gamma},\nabla_\Gamma\psi_{\Gamma}\bigr)_{[L^2(\Gamma)]^N}= 0, \mbox{ \ for any $\bm{\psi}=[\psi,\psi_\Gamma]\in V_\delta$,} \label{Lem04-100}
    \end{aligned}
  \end{equation}
  admits a unique solution $\bm{\theta}=[\theta,\theta_\Gamma]\in V_\delta$.
  \end{description}
  \end{lem}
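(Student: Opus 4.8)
The plan is to treat the two variational identities by two different classical tools, matched to their structure. In \eqref{Lem04-100} the weight $\alpha(\overline\eta_0)$ is a \emph{fixed} function, so the left-hand side is the Gateaux derivative of a strictly convex, coercive functional, and part (II) will be handled by the direct method of the calculus of variations; in \eqref{Lem04-101} the weight $\alpha'(\eta)$ involves the \emph{unknown} $\eta$, so part (I) will be handled by the Browder--Minty theory of monotone operators, the key point being that $\alpha'$ is nondecreasing, which keeps the relevant operator monotone.

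For part (II): on the Hilbert space $V_\delta$ (which equals $V_1$, since $\delta>0$, and is nonempty) I would consider
\[
\mathcal{J}_2(\bm\theta):=\tfrac{1}{2\tau}\,|A_0(\overline{\bm\eta}_0)^{1/2}\bm\theta|_H^2-\tfrac1\tau\,(A_0(\overline{\bm\eta}_0)\overline{\bm\theta}_0,\bm\theta)_H+\Phi_\delta(\alpha(\overline\eta_0);\bm\theta),
\]
with $\Phi_\delta$ as in \eqref{DefOfPhi_delta}. First I would check that $\mathcal{J}_2$ is proper (finite at each element of $V_\delta$, using $f_\delta(\omega)\le|\omega|$ from \eqref{f_d}), l.s.c. on $V_\delta$ (weak lower semicontinuity of the two quadratic gradient terms and, by convexity of $f_\delta$, of $\int_\Omega\alpha(\overline\eta_0)f_\delta(\nabla\theta)\,dx$, while the $H$-terms are continuous once $A_0(\overline{\bm\eta}_0)\in L^\infty$, which holds in the application where $\overline{\bm\eta}_0\in\Lambda_0^1$), strictly convex (the $\tfrac{\delta^2}{2}\int_\Omega|\nabla\theta|^2$, the $\tfrac{\kappa_\Gamma^2}{2}\int_\Gamma|\nabla_\Gamma\theta_\Gamma|^2$ and the mass term with weight $\ge\delta_\alpha$ together are strictly convex over $V_\delta$), and coercive on $V_\delta$ (drop the nonnegative term $\int_\Omega\alpha(\overline\eta_0)f_\delta(\nabla\theta)\,dx$ and use (A4) together with Young's inequality to bound the remainder below by $c\,|\bm\theta|_{H^1(\Omega)\times H^1(\Gamma)}^2-C$). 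Hence $\mathcal{J}_2$ admits a unique minimizer $\bm\theta\in V_\delta$; since $f_\delta\in C^1(\R^N)$ with $|\nabla f_\delta|\le1$, $\mathcal{J}_2$ is Gateaux differentiable and the relation $\mathcal{J}_2'(\bm\theta)[\bm\psi]=0$ for all $\bm\psi\in V_\delta$ is precisely \eqref{Lem04-100}, while strict convexity gives uniqueness of the solution.

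For part (I): I would put $\tau_*:=1/(1+L_{\bm g})$, where $L_{\bm g}$ is the Lipschitz constant of $\bm g$, and for $\tau\in(0,\tau_*)$ define $\mathcal{B}:V_\varepsilon\to V_\varepsilon^*$ by
\[
\langle\mathcal{B}\bm\eta,\bm\varphi\rangle:=\tfrac1\tau(\bm\eta+\bm g(\bm\eta),\bm\varphi)_H+\kappa^2(\nabla\eta,\nabla\varphi)_{[L^2(\Omega)]^N}+(\alpha'(\eta)\,f_\delta(\nabla\tilde{\theta}_0),\varphi)_{L^2(\Omega)}+(\nabla_\Gamma(\varepsilon\eta_\Gamma),\nabla_\Gamma(\varepsilon\varphi_\Gamma))_{[L^2(\Gamma)]^N},
\]
so that \eqref{Lem04-101} reads $\mathcal{B}\bm\eta=\tfrac1\tau\,\overline{\bm\eta}_0$ in $V_\varepsilon^*$ (here $\nabla\tilde{\theta}_0\in L^2(\Omega)^N$ is given, so $0\le f_\delta(\nabla\tilde{\theta}_0)\le|\nabla\tilde{\theta}_0|\in L^2(\Omega)$). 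Then I would verify: (a) $\mathcal{B}$ is strongly monotone --- the zeroth-order part contributes $\tfrac1\tau|\bm\eta-\bm\zeta|_H^2+\tfrac1\tau(\bm g(\bm\eta)-\bm g(\bm\zeta),\bm\eta-\bm\zeta)_H\ge\tfrac{1-L_{\bm g}\tau}{\tau}|\bm\eta-\bm\zeta|_H^2>0$, the two gradient terms are nonnegative, and $(\alpha'(\eta)-\alpha'(\zeta),(\eta-\zeta)f_\delta(\nabla\tilde{\theta}_0))_{L^2(\Omega)}\ge0$ because $\alpha'$ is nondecreasing (by (A3)) and $f_\delta\ge0$; combined with the equivalence of $|\cdot|_{V_\varepsilon}$ with the form $\big(\tfrac1\tau|\cdot|_H^2+\kappa^2|\nabla\cdot|^2+|\nabla_\Gamma(\varepsilon\cdot)|^2\big)^{1/2}$ (via the trace theorem when $\varepsilon=0$, cf. Remark \ref{Rem.V_e}) this yields $\langle\mathcal{B}\bm\eta-\mathcal{B}\bm\zeta,\bm\eta-\bm\zeta\rangle\ge c\,|\bm\eta-\bm\zeta|_{V_\varepsilon}^2$; (b) $\mathcal{B}$ is coercive, using $\alpha'(s)s\ge0$ (again (A3), with $\alpha'(0)=0$), $f_\delta\ge0$, and the Lipschitz bound on $\bm g$; (c) $\mathcal{B}$ is bounded and hemicontinuous, the induced Nemytskii operators being continuous --- immediate for the Lipschitz $\bm g$, and for $\alpha'$ one uses the $L^\infty$-bound on the unknown available in the application (or truncates $\alpha'$ and passes to the limit via the estimate in (b), which is uniform in the truncation). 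Browder--Minty then gives surjectivity of $\mathcal{B}$, and strong monotonicity gives injectivity, so \eqref{Lem04-101} has a unique solution $\bm\eta\in V_\varepsilon$.

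The main obstacle is the interplay of the two sources of possible degeneracy. On one side is the singular character of the diffusion, which is exactly why the relaxation parameter $\delta>0$ is retained: it is the $\delta^2\nabla\theta$ term that restores full $H^1$-coercivity and strict convexity of $\mathcal{J}_2$, and it is $f_\delta\in C^1$ with bounded gradient that makes the energy Gateaux differentiable, so that the Euler--Lagrange relation is an identity rather than merely a variational inequality. On the other side are the unknown-dependent weights $\alpha'(\eta)$ in \eqref{Lem04-101} and $\alpha(\overline\eta_0)$ in \eqref{Lem04-100}, for which (A3)--(A4) provide only a lower bound and convexity; so the genuine care goes into verifying that the corresponding Nemytskii operators act continuously and boundedly between the relevant function spaces, which is where the monotonicity of $\alpha'$ (for (I)) and the $L^\infty$-bounds carried by $\overline{\bm\eta}_0$ and $\tilde{\theta}_0$ in the application (for both) are used. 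Finally, the choice $\tau_*\sim1/L_{\bm g}$ is precisely what upgrades the zeroth-order term $\tfrac1\tau(\bm\eta+\bm g(\bm\eta),\cdot)$ from a mere Lipschitz perturbation into a strongly monotone one.
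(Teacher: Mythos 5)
Your proposal is correct in substance, but for item (I) it takes a genuinely different route from the paper. The paper's proof is purely variational: it observes that \eqref{Lem04-101} is the Euler--Lagrange equation of the potential \eqref{Lem04-200}, namely $\bm{\eta} \mapsto \frac{1}{2\tau}|\bm{\eta}-\overline{\bm{\eta}}_0|_H^2 + \Psi_{\Omega,\Gamma}(\kappa\eta,\varepsilon\eta_\Gamma) + G(\bm{\eta}) + \int_\Omega \alpha(\eta) f_\delta(\nabla\overline{\theta}_0)\,dx$, uses Taylor's theorem as in \eqref{g-condi} to show that for $\tau < \tau_*$ (with $\tau_*$ as in \eqref{tau_*}) the quadratic term absorbs the semiconvexity defect of $G$, so the potential is coercive and strictly convex, and then cites the Ekeland--T\'emam existence/uniqueness theory; item (II) is the same argument for $\frac{1}{2\tau}|A_0(\overline{\bm{\eta}}_0)^{1/2}(\bm{\theta}-\overline{\bm{\theta}}_0)|_H^2 + \Phi_\delta(\alpha(\overline{\eta}_0);\bm{\theta})$. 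Your part (II) is essentially this proof (direct method plus Gateaux differentiability of $\Phi_\delta$), while your part (I) replaces minimization by Browder--Minty: the smallness of $\tau$ that the paper uses for strict convexity you use for strong monotonicity of the zeroth-order part, and the convexity of $\eta\mapsto\alpha(\eta)f_\delta(\nabla\tilde{\theta}_0)$ that the paper exploits at the level of the functional you exploit as monotonicity of $\alpha'$ at the level of the operator; the two mechanisms are the same, but your version additionally requires verifying boundedness and hemicontinuity of $\mathcal{B}$, which is where the lack of a growth bound on $\alpha'$ forces you into the truncation/$L^\infty$ discussion --- a point the paper's convex-analytic formulation sidesteps more quietly (it, too, is informal about integrability of $\alpha'(\eta)f_\delta(\nabla\overline{\theta}_0)\varphi$ for data merely in $H$; in the application the $\Lambda_0^1$-bounds render this harmless). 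Two minor remarks: your monotonicity constant $\frac{1-L_{\bm g}\tau}{\tau}$ is computed as if $\bm{g}(\bm{\eta})$ did not carry the factor $\frac{1}{\tau}$, which contradicts your own definition of $\mathcal{B}$ (and the literal form of \eqref{Lem04-101}) but agrees with how the paper's proof actually treats the $\bm{g}$-term via $G$, so you are at worst inheriting a typo of the statement; and your $\tau_* = 1/(1+L_{\bm g})$ versus the paper's \eqref{tau_*} is immaterial.
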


  \begin{proof}
  First, we show the item (I). Let us note that the variational identity \eqref{Lem04-101} corresponds to the Euler-Lagrange equation for the following proper and l.s.c. potential functional:
  \begin{equation}\label{Lem04-200}
    \begin{aligned}
    \bm{\eta}=[\eta,\eta_\Gamma]& \in H \mapsto \dfrac{1}{2\tau}|\bm{\eta}-\overline{\bm{\eta}}_0|_H^2
+\Psi_{\Omega,\Gamma}(\kappa\eta,\varepsilon\eta_\Gamma) 
    \\
    &+G(\eta,\eta_\varGamma)+\int_\Omega \alpha(\eta)f_\delta(\nabla\overline{\theta}_0)\,dx \in [0,\infty].
  \end{aligned}
\end{equation}
  Here, let us set:
  \begin{equation}
      \tau_* := \frac{1}{2(1+|\bm{g}'|_{[L^\infty(\R)]^2})} = \frac{1}{2(1+|g'|_{L^\infty(\R)} +|g_\Gamma'|_{L^\infty(\R)})} .
      \label{tau_*}
  \end{equation}
  Then, invoking Taylor's theorem, one can see that:
  \begin{align}
      & \frac{1}{2 \tau} |\bm{\eta} -\bm{\overline{\eta}}_0|_H^2 +G(\bm{\eta}) 
      \\
      & \geq \left( \frac{1}{2 \tau} -\frac{1}{2}|\bm{g}'|_{[L^\infty(\R)]^2} \right) |\bm{\eta} -\bm{\overline{\eta}}_0|_H^2 +G(\bm{\overline{\eta}}_0) +\bigl( \bm{g}(\bm{\overline{\eta}}_0), \bm{\eta} -\bm{\overline{\eta}}_0 \bigr)_H
      \\
      & \geq \frac{1}{4 \tau} |\bm{\eta} -\bm{\overline{\eta}}_0|_H^2 +G(\bm{\overline{\eta}}_0) +\bigl( \bm{g}(\bm{\overline{\eta}}_0), \bm{\eta} -\bm{\overline{\eta}}_0 \bigr)_H, \mbox{ for all $ \tau \in (0, \tau_*) $.}
    \label{g-condi}
  \end{align}
      Under $  \tau \in (0, \tau_*) $, the above \eqref{g-condi} implies the coercivity and strict convexity of the potential \eqref{Lem04-200}. Therefore, we can conclude the item (I), by applying the general theory of convex analysis (cf. \cite[Proposition 1.2 in Chapter II]{MR1727362}).

  As well as, the item (II) can be verified, immediately, as a straightforward consequence of the general theory of convex function (cf. \cite[Proposition 1.2 in Chapter II]{MR1727362}), applied to the following proper l.s.c., coercive, and strict convex function: 
  \begin{equation*}
      \bm{\theta} = [\theta,\theta_\Gamma] \in H \mapsto \frac{1}{2\tau}\bigl|A_0(\overline{\bm{\eta}}_0)^{\frac12}(\bm{\theta}-\overline{\bm{\theta}}_0)\bigr|_H^2+ \Phi_{\delta}(\alpha(\overline{\eta}); \bm{\theta}) \in [0,\infty]. 
  \end{equation*}
  Thus, we can conclude the proof of Lemma \ref{Lem04}.
\end{proof}

\begin{lem}\label{Lem02}
    Let $ \tau_* \in (0, 1) $ be the small constant as in \eqref{tau_*}.   For every $\varepsilon\ge0$ and $ \tau \in (0, \tau_*) $, let us assume that $\bm{\eta}^k=[\eta^k,\eta_{\Gamma}^k]\in V_\varepsilon$, $\bm{\eta}_{0}^k=[\eta_{0}^k,\eta_{\Gamma,0}^k]\in V_\varepsilon$, $k=1, 2$, $\tilde{\vartheta} \in V_\delta$, and  
\begin{equation}\label{eta_1}
  \begin{aligned}
      &\frac{1}{\tau}\bigl(\bm{\eta}^{1}-\bm{\eta}_{0}^1+\bm{g}(\bm{\eta}^1),[\bm{\varphi}]^+\bigr)_H+\kappa^2\bigl(\nabla\eta^1,\nabla[\varphi]^+\bigr)_{[L^2(\Omega)]^N}
  \\
      &\qquad +\bigl(\alpha'(\eta^1)f_\delta(\nabla\tilde{\vartheta}),[\varphi]^+\bigr)_{L^2(\Omega)}+\bigl(\nabla_\Gamma(\varepsilon\eta_{\Gamma}^1),\nabla_\Gamma(\varepsilon[\varphi_\Gamma]^+)\bigr)_{[L^2(\Gamma)]^N} \leq 0, 
\end{aligned}
\end{equation}
\begin{equation}\label{eta_2}
  \begin{aligned}
      &\frac{1}{\tau}\bigl(\bm{\eta}^{2}-\bm{\eta}_{0}^2+\bm{g}(\bm{\eta}^2),[\bm{\varphi}]^+ \bigr)_H+\kappa^2\bigl(\nabla\eta^2,\nabla [\varphi]^+ \bigr)_{[L^2(\Omega)]^N}
  \\
      &\qquad +\bigl(\alpha'(\eta^2)f_\delta(\nabla\tilde{\vartheta}), [\varphi]^+ \bigr)_{L^2(\Omega)}+\bigl(\nabla_\Gamma(\varepsilon\eta_{\Gamma}^2),\nabla_\Gamma(\varepsilon [\varphi_\Gamma]^+)\bigr)_{[L^2(\Gamma)]^N}\geq0,
\end{aligned}
\end{equation}
for any $\bm{\varphi}=[\varphi,\varphi_\Gamma]\in V_\varepsilon$. Then, it holds that:
\begin{equation}\label{eta_3}
\bigl|[\bm{\eta}^1-\bm{\eta}^2]^+\bigr|_H^2 \le \bigl| [\bm{\eta}_{0}^1-\bm{\eta}_{0}^2]^+ \bigr|_H^2. 
\end{equation}
\end{lem}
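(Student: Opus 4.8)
The plan is a standard Stampacchia-type $ L^2 $-comparison. Write $ \bm{w} := \bm{\eta}^1 -\bm{\eta}^2 = [w, w_\Gamma] $ with $ w := \eta^1 -\eta^2 \in H^1(\Omega) $, so that $ [\bm{w}]^+ = [[w]^+, [w_\Gamma]^+] $, and note $ \bm{w} \in V_\varepsilon $ because $ V_\varepsilon $ is a linear subspace (Remark~\ref{Rem.V_e}). I would insert the single test function $ \bm{\varphi} = \bm{w} $ into both \eqref{eta_1} and \eqref{eta_2}; since the right-hand side of \eqref{eta_1} is $ \leq 0 $ and that of \eqref{eta_2} is $ \geq 0 $, subtracting \eqref{eta_2} from \eqref{eta_1} yields an inequality $ (\,\cdots) \leq 0 $ whose left-hand side (with the reaction pairing written in the form dictated by the potential \eqref{Lem04-200}) is
\begin{align*}
    & \tfrac{1}{\tau}\bigl( \bm{w} -(\bm{\eta}_0^1 -\bm{\eta}_0^2), [\bm{w}]^+ \bigr)_H +\bigl( \bm{g}(\bm{\eta}^1) -\bm{g}(\bm{\eta}^2), [\bm{w}]^+ \bigr)_H +\kappa^2 \bigl( \nabla w, \nabla [w]^+ \bigr)_{[L^2(\Omega)]^N}
    \\
    & \quad +\bigl( \bigl( \alpha'(\eta^1) -\alpha'(\eta^2) \bigr) f_\delta(\nabla \tilde{\vartheta}), [w]^+ \bigr)_{L^2(\Omega)} +\bigl( \nabla_\Gamma(\varepsilon w_\Gamma), \nabla_\Gamma(\varepsilon [w_\Gamma]^+) \bigr)_{[L^2(\Gamma)]^N} .
\end{align*}

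Next I would check that three of these four contributions are nonnegative and discard them. By the $ H^1 $-chain rule for truncations, $ \nabla [w]^+ = \mathbf{1}_{\{w > 0\}} \nabla w $ a.e.\ in $ \Omega $, and, since $ \varepsilon [w_\Gamma]^+ = [\varepsilon w_\Gamma]^+ $ for $ \varepsilon \geq 0 $, $ \nabla_\Gamma(\varepsilon [w_\Gamma]^+) = \mathbf{1}_{\{w_\Gamma > 0\}} \nabla_\Gamma(\varepsilon w_\Gamma) $ a.e.\ on $ \Gamma $; hence the third and fifth terms equal $ \kappa^2 |\nabla [w]^+|_{L^2(\Omega)}^2 \geq 0 $ and $ |\nabla_\Gamma(\varepsilon [w_\Gamma]^+)|_{L^2(\Gamma)}^2 \geq 0 $. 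For the fourth term, (A3) gives that $ \alpha' $ is nondecreasing, hence $ \alpha'(\eta^1) -\alpha'(\eta^2) \geq 0 $ on $ \{ \eta^1 > \eta^2 \} $, while $ f_\delta \geq 0 $ by \eqref{f_d} and $ [w]^+ \geq 0 $ (and $ [w]^+ \equiv 0 $ on $ \{\eta^1 \leq \eta^2\} $), so that term is $ \geq 0 $ as well. Discarding the three nonnegative terms and multiplying through by $ \tau $ leaves
\begin{equation*}
    \bigl( \bm{w} +\tau \bigl( \bm{g}(\bm{\eta}^1) -\bm{g}(\bm{\eta}^2) \bigr), [\bm{w}]^+ \bigr)_H \leq \bigl( \bm{\eta}_0^1 -\bm{\eta}_0^2, [\bm{w}]^+ \bigr)_H .
\end{equation*}

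The step I expect to be the main obstacle is the lower bound for the left-hand side, because $ \bm{g} $ is merely Lipschitz. Here I would use the explicit value of $ \tau_* $ in \eqref{tau_*}: for $ \tau \in (0, \tau_*) $ the derivative of each scalar map $ s \mapsto s +\tau g(s) $ (resp.\ $ s \mapsto s +\tau g_\Gamma(s) $) is $ \geq 1 -\tau|\bm{g}'|_{[L^\infty(\R)]^2} > \tfrac{1}{2} > 0 $, so that $ \bm{\eta} \mapsto \bm{\eta} +\tau \bm{g}(\bm{\eta}) $ acts componentwise in an order-preserving manner; combining this with the structure of $ \bm{g} $ in (A1), a componentwise pointwise inequality on $ \{ w > 0 \} $ and $ \{ w_\Gamma > 0 \} $ gives $ \bigl( \bm{w} +\tau ( \bm{g}(\bm{\eta}^1) -\bm{g}(\bm{\eta}^2) ), [\bm{w}]^+ \bigr)_H \geq \bigl| [\bm{w}]^+ \bigr|_H^2 $. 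It is exactly here that both the quantitative choice of $ \tau_* $ and the monotonicity/sign hypotheses on $ \bm{g} $ are indispensable, and securing the sharp constant is the delicate point.

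For the conclusion I would bound the right-hand side by Cauchy--Schwarz after noting that $ [\bm{w}]^+ \geq 0 $ componentwise and $ \bm{a} \leq [\bm{a}]^+ $ componentwise, so $ \bigl( \bm{\eta}_0^1 -\bm{\eta}_0^2, [\bm{w}]^+ \bigr)_H \leq \bigl( [\bm{\eta}_0^1 -\bm{\eta}_0^2]^+, [\bm{w}]^+ \bigr)_H \leq \bigl| [\bm{\eta}_0^1 -\bm{\eta}_0^2]^+ \bigr|_H \, \bigl| [\bm{w}]^+ \bigr|_H $. Plugging this and the lower bound of the previous step into the displayed inequality and cancelling one factor of $ \bigl| [\bm{w}]^+ \bigr|_H $ (trivially, when it vanishes) yields $ \bigl| [\bm{w}]^+ \bigr|_H \leq \bigl| [\bm{\eta}_0^1 -\bm{\eta}_0^2]^+ \bigr|_H $, which is \eqref{eta_3} after squaring.
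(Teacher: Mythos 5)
Your overall route coincides with the paper's: test \eqref{eta_1} and \eqref{eta_2} with $\bm{\varphi}=\bm{w}:=\bm{\eta}^1-\bm{\eta}^2$, subtract, use $\nabla[w]^+=\mathbf{1}_{\{w>0\}}\nabla w$ (and its surface analogue) to see that the two gradient contributions are nonnegative, and use $\alpha''\ge 0$ from (A3) together with $f_\delta\ge 0$ from \eqref{f_d} to discard the weight term; your reading of the reaction pairing (factor $\tau$ on the $\bm{g}$-difference, as dictated by \eqref{Lem04-200}) is also the one the paper's own computation uses. Up to that point everything is sound. The genuine gap is the asserted lower bound $\bigl(\bm{w}+\tau(\bm{g}(\bm{\eta}^1)-\bm{g}(\bm{\eta}^2)),[\bm{w}]^+\bigr)_H\ge\bigl|[\bm{w}]^+\bigr|_H^2$. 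Pointwise on $\{w>0\}$ (resp. $\{w_\Gamma>0\}$) this is exactly the requirement $g(\eta^1)\ge g(\eta^2)$ whenever $\eta^1>\eta^2$, i.e. that $g$ and $g_\Gamma$ be nondecreasing -- and (A1) does not provide this: it assumes only Lipschitz continuity, the sign conditions $g(0)\le 0$, $g(1)\ge 0$, and nonnegative primitives, which are satisfied by the usual non-monotone double-well-type nonlinearities. The order-preservation of $s\mapsto s+\tau g(s)$, which is precisely what $\tau<\tau_*$ in \eqref{tau_*} buys, yields only $\bigl(\bm{w}+\tau(\bm{g}(\bm{\eta}^1)-\bm{g}(\bm{\eta}^2)),[\bm{w}]^+\bigr)_H\ge(1-\tau|\bm{g}'|_{[L^\infty(\R)]^2})\bigl|[\bm{w}]^+\bigr|_H^2\ge\tfrac12\bigl|[\bm{w}]^+\bigr|_H^2$, hence only $\bigl|[\bm{w}]^+\bigr|_H\le 2\bigl|[\bm{\eta}_0^1-\bm{\eta}_0^2]^+\bigr|_H$, not \eqref{eta_3} with constant $1$. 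So the step you yourself flag as delicate is not merely delicate: as justified, it is a false deduction.

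For context, the paper handles the reaction term differently -- via the Lipschitz bound $|\bm{g}(\bm{\eta}^1)-\bm{g}(\bm{\eta}^2)|\le|\bm{g}'|_{[L^\infty(\R)]^2}[\bm{w}]^+$ on the positivity set plus Young's inequality, absorbing the result into $\tfrac{1}{\tau}|[\bm{w}]^+|_H^2$ through $\tau<\tau_*$ -- and, as written, that computation also delivers a multiplicative constant ($|[\bm{w}]^+|_H^2\le 4\,|[\bm{\eta}_0^1-\bm{\eta}_0^2]^+|_H^2$) rather than the constant-$1$ form of \eqref{eta_3}. Since Lemma \ref{Lem02} is invoked in the proof of Theorem \ref{Th.t-Dis} only with comparison data for which the right-hand side vanishes ($\bm{\eta}^2=\bm{\eta}_0^2=[1,1]$, $\bm{\eta}^1=\bm{\eta}_0^1=[0,0]$), any finite constant serves the purpose; so your argument becomes correct, and of the same strength as the paper's, once you replace the claimed constant $1$ by the constant your order-preservation observation actually gives, and drop the appeal to ``the structure of $\bm{g}$ in (A1)'' as a source of monotonicity.
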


\begin{proof}
    Let us take the difference between \eqref{eta_1} and \eqref{eta_2}, and put $ \bm{\varphi} = \bm{\eta}^1-\bm{\eta}^2 $. Then, by (A1), (A3), and Young's inequality, we can observe that:
\begin{align}
&\frac{1}{\tau}\bigl|[\bm{\eta}^1-\bm{\eta}^2]^+\bigr|_H^2+\bigl|\nabla[\eta^1-\eta^2]^+\bigr|_{[L^2(\Omega)]^N}^2+\bigl|\nabla_\Gamma(\varepsilon[\eta_{\Gamma}^1-\eta_{\Gamma}^2]^+)\bigr|_{[L^2(\Gamma)]^N}^2\nonumber
\\
&=\frac{1}{\tau}\bigl(\bm{\eta}_{0}^1-\bm{\eta}_{0}^2,[\bm{\eta}^1-\bm{\eta}^2]^+\bigr)_H-\bigl(\bm{g}(\bm{\eta}^1)-\bm{g}(\bm{\eta}^2),[\bm{\eta}^1-\bm{\eta}^2]^+\bigr)_H\nonumber
\\
&\qquad -\int_\Omega(\alpha'(\eta^1)-\alpha'(\eta^2))[\eta^1-\eta^2]^+f_\delta(\nabla\tilde{\vartheta})dx\nonumber
\\
&\le \left(\frac{1}{4\tau}+|\bm{g}'|_{[L^\infty(\R)]^2}\right)\bigl|[\bm{\eta}^1-\bm{\eta}^2]^+\bigr|_H^2+\frac{1}{\tau}\bigl|[\bm{\eta}_{0}^1-\bm{\eta}_{0}^2]^+\bigr|_H^2.
    \label{Lem02-500}
\end{align}
  Here, noting that:
\begin{align}
    & \frac{3}{4 \tau} -|\bm{g}'|_{[L^\infty(\R)]^2} \geq \frac{1}{4 \tau}, \mbox{ for all $ \tau \in (0, \tau_*) $,}
\end{align}
the inequality \eqref{Lem02-500} is obtained as a consequence of \eqref{eta_3}.
\end{proof}

\begin{lem}\label{Lem03}
Let $\bm{\tilde{\eta}}=[\tilde{\eta},\tilde{\eta}_\Gamma] \in V_\varepsilon$ and $\bm{\vartheta}_{0,k}=[\vartheta_{0}^k,\vartheta_{\Gamma,0}^k]\in V_\delta$, $k=1,2$ be fixed pairs of functions, and let us assume that:
\begin{align}
    & \frac{1}{\tau} \bigl( A_0(\bm{\tilde{\eta}})(\bm{\vartheta}^1 -\bm{\vartheta}_0^1), [\bm{\psi}]^+ \bigr)_H +\bigl( \delta^2 \nabla \vartheta^1 +\alpha(\tilde{\eta}) \nabla f_\delta(\nabla \vartheta^1), \nabla [\psi]^+ \bigr)_{[L^2(\Omega)]^N} 
    \nonumber
    \\
    & \hspace{14ex} +\kappa_\Gamma^2 \bigl( \nabla_\Gamma \vartheta_\Gamma^1, \nabla_\Gamma [\psi_\Gamma]^+ \bigr)_{[L^2(\Gamma)]^N} \leq 0,
    \label{theta_1}
\end{align}
    \vspace{-3ex}
\begin{align}
    & \frac{1}{\tau} \bigl( A_0(\bm{\tilde{\eta}})(\bm{\vartheta}^2 -\bm{\vartheta}_0^2), [\bm{\psi}]^+ \bigr)_H +\bigl( \delta^2 \nabla \vartheta^2 +\alpha(\tilde{\eta}) \nabla f_\delta(\nabla \vartheta^2), \nabla [\psi]^+ \bigr)_{[L^2(\Omega)]^N} 
    \nonumber
    \\
    & \hspace{14ex} +\kappa_\Gamma^2 \bigl( \nabla_\Gamma \vartheta_\Gamma^2, \nabla_\Gamma [\psi_\Gamma]^+ \bigr)_{[L^2(\Gamma)]^N} \geq 0,
    \label{theta_2}
\end{align}
for any $ \bm{\psi} = [\psi, \psi_\Gamma] \in V_\delta $. Then, it holds that:
\begin{equation}\label{theta_3}
\bigl|A_0(\bm{\tilde{\eta}})^{\frac12}[\bm{\vartheta}^1-\bm{\vartheta}^2]^+\bigr|_H^2 \le \bigl|A_0(\bm{\tilde{\eta}})^{\frac12}[\bm{\vartheta}_{0}^1-\bm{\vartheta}_{0}^2]^+\bigr|_H^2.
\end{equation}
\end{lem}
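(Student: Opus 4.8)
The plan is to mimic the proof of Lemma~\ref{Lem02}, taking the difference of the two variational inequalities \eqref{theta_1} and \eqref{theta_2} and testing with an admissible choice of $\bm{\psi}$ that isolates the positive part $[\bm{\vartheta}^1-\bm{\vartheta}^2]^+$. First I would check that $\bm{\psi} := \bm{\vartheta}^1-\bm{\vartheta}^2 = [\vartheta^1-\vartheta^2,\vartheta_\Gamma^1-\vartheta_\Gamma^2]$ belongs to $V_\delta$; this holds because $\bm{\vartheta}_{0,k}\in V_\delta$ and each $\bm{\vartheta}^k$ solving a scheme of the form \eqref{AP_t^d}/\eqref{Lem04-100} lies in $V_\delta$, and $V_\delta$ is a linear space (Remark~\ref{Rem.V_e}). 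Subtracting \eqref{theta_2} from \eqref{theta_1} and using $[\bm{\psi}]^+ = [\bm{\vartheta}^1-\bm{\vartheta}^2]^+$ gives
\begin{align}
& \frac{1}{\tau} \bigl( A_0(\bm{\tilde{\eta}})\bigl((\bm{\vartheta}^1-\bm{\vartheta}^2) - (\bm{\vartheta}_0^1-\bm{\vartheta}_0^2)\bigr), [\bm{\vartheta}^1-\bm{\vartheta}^2]^+ \bigr)_H
\nonumber\\
& \quad +\bigl( \delta^2 \nabla(\vartheta^1-\vartheta^2) +\alpha(\tilde\eta)(\nabla f_\delta(\nabla\vartheta^1)-\nabla f_\delta(\nabla\vartheta^2)), \nabla[\vartheta^1-\vartheta^2]^+ \bigr)_{[L^2(\Omega)]^N}
\nonumber\\
& \quad +\kappa_\Gamma^2\bigl( \nabla_\Gamma(\vartheta_\Gamma^1-\vartheta_\Gamma^2), \nabla_\Gamma[\vartheta_\Gamma^1-\vartheta_\Gamma^2]^+ \bigr)_{[L^2(\Gamma)]^N} \leq 0.
\end{align}

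Next I would handle each term. Writing $w := \bm{\vartheta}^1-\bm{\vartheta}^2$, one has the standard chain-rule identities $\nabla[\zeta]^+ = \chi_{\{\zeta>0\}}\nabla\zeta$ so that $(\nabla w, \nabla[w]^+) = |\nabla[w]^+|^2 \geq 0$ for both the bulk and the surface gradient terms; the $\delta^2$-term and the $\kappa_\Gamma^2$-term are thus nonnegative and can be dropped. For the monotone diffusion term, since $f_\delta$ is convex (it is $\gamma_\delta(\omega)=\sqrt{\delta^2+|\omega|^2}-\delta$), the gradient map $\nabla f_\delta$ is monotone, and $\alpha(\tilde\eta)\geq\delta_\alpha>0$ by (A4); hence $(\alpha(\tilde\eta)(\nabla f_\delta(\nabla\vartheta^1)-\nabla f_\delta(\nabla\vartheta^2)),\nabla[w]^+)$ is nonnegative because on $\{w>0\}$, i.e. where the two bulk components satisfy $\vartheta^1>\vartheta^2$, the monotonicity of $\nabla f_\delta$ makes the integrand pointwise $\geq 0$ (and it vanishes off that set). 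Discarding these three nonnegative contributions leaves
\begin{align}
& \bigl( A_0(\bm{\tilde\eta}) w, [w]^+ \bigr)_H \leq \bigl( A_0(\bm{\tilde\eta})(\bm{\vartheta}_0^1-\bm{\vartheta}_0^2), [w]^+\bigr)_H.
\end{align}
On the left, since $A_0(\bm{\tilde\eta})$ is a diagonal positive matrix (from (A2), with entries $\alpha_0(\bm{\tilde\eta}),\alpha_{\Gamma,0}(\bm{\tilde\eta})>0$) the pointwise product $A_0(\bm{\tilde\eta})\,\zeta\cdot[\zeta]^+ = A_0(\bm{\tilde\eta})[\zeta]^+\cdot[\zeta]^+$, so $(A_0(\bm{\tilde\eta})w,[w]^+)_H = |A_0(\bm{\tilde\eta})^{1/2}[w]^+|_H^2$. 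On the right, writing $z_0 := \bm{\vartheta}_0^1-\bm{\vartheta}_0^2$ and using $[z_0]^+ \geq z_0$ componentwise together with positivity of $A_0(\bm{\tilde\eta})^{1/2}$ on the support of $[w]^+$, Cauchy--Schwarz in the $A_0(\bm{\tilde\eta})$-weighted inner product yields
\begin{align}
\bigl(A_0(\bm{\tilde\eta}) z_0, [w]^+\bigr)_H \leq \bigl(A_0(\bm{\tilde\eta})[z_0]^+,[w]^+\bigr)_H \leq \bigl|A_0(\bm{\tilde\eta})^{1/2}[z_0]^+\bigr|_H\,\bigl|A_0(\bm{\tilde\eta})^{1/2}[w]^+\bigr|_H.
\end{align}
Combining, $\bigl|A_0(\bm{\tilde\eta})^{1/2}[w]^+\bigr|_H^2 \leq \bigl|A_0(\bm{\tilde\eta})^{1/2}[z_0]^+\bigr|_H \bigl|A_0(\bm{\tilde\eta})^{1/2}[w]^+\bigr|_H$, and dividing through (or trivially if the left side is zero) gives exactly \eqref{theta_3}.

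I expect the only genuinely delicate point to be the justification that the weighted positive-part testing behaves as claimed — specifically, that $[\bm{\psi}]^+$ appearing in \eqref{theta_1}--\eqref{theta_2} is the pair of positive parts of the two components and that $[w]^+ \in V_\delta$ is a legitimate test object, i.e. that $[\vartheta^1-\vartheta^2]^+ \in H^1(\Omega)$ with $([\vartheta^1-\vartheta^2]^+)\trace{\Gamma} = [\vartheta_\Gamma^1-\vartheta_\Gamma^2]^+$ when $\delta>0$ (and the analogous $H^1(\Gamma)$ statement), so that the trace-matching constraint defining $V_\delta$ is preserved under taking positive parts. This follows from the lattice property of $H^1$ and the continuity of the trace operator, together with the fact that the $\vartheta^k$ genuinely lie in $V_\delta$, but it should be stated carefully. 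Once that is in place, the rest is the routine monotonicity-plus-Cauchy--Schwarz computation above, entirely parallel to Lemma~\ref{Lem02} but with the $A_0(\bm{\tilde\eta})$-weighted inner product replacing the plain $H$-inner product; note in particular that, unlike in Lemma~\ref{Lem02}, there is no bad zeroth-order term to absorb (the $\bm{g}$-contribution has no analogue here), so no smallness of $\tau$ is needed in the estimate itself.
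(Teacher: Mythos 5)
Your argument is correct and is essentially the paper's proof: the paper likewise takes the difference of \eqref{theta_1} and \eqref{theta_2}, puts $\bm{\psi}=\bm{\vartheta}^1-\bm{\vartheta}^2$, and uses the convexity/monotonicity coming from \eqref{f_d} together with the positivity of $\alpha$ and $A_0$, finishing with the same weighted Cauchy--Schwarz step you spell out. Your worry about $[\bm{\vartheta}^1-\bm{\vartheta}^2]^+$ being an admissible test object is actually moot, since the hypotheses only require $\bm{\psi}\in V_\delta$ (the positive parts appear inside the inequalities), and $\bm{\vartheta}^1-\bm{\vartheta}^2\in V_\delta$ by linearity.
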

\begin{proof}
    This lemma is immediately verified by taking the difference between \eqref{theta_1} and \eqref{theta_2}, putting $ \bm{\psi} = \bm{\vartheta}^1 -\bm{\vartheta}^2 $, and using \eqref{f_d} and (A3). 
\end{proof}

\noindent
\emph{Proof of Theorem \ref{Th.t-Dis}.} Let $\tau_* \in (0, 1)$ be the small constant as in \eqref{tau_*}. On this basis, let us fix any time-step-size $\tau \in (0,\tau_*)$. Then, since the value of constant $\tau_*$ is independent of the time-index $i \in \N \cup \{0\}$, the solution $\{[\bm{\eta}_i,\bm{\theta}_i]\}_{i=0}^\infty=\{[\eta_{i},\eta_{\Gamma,i},\theta_{i},\theta_{\Gamma,i}]\}_{i=1}^\infty \subset V_\varepsilon \times V_\delta$ to the time-discretization scheme (AP)$_\varepsilon^\delta$ is obtained by applying Lemma \ref{Lem04} to the equations \eqref{AP_eta} and \eqref{AP_t^d}, inductively. 

Next, the properties $ \{ \bm{\eta}_i \}_{i = 1}^\infty \subset \Lambda_0^1 $ and $ \{ \bm{\theta}_i \}_{i = 1}^\infty \subset \Lambda_{r_0}^{r_1} $ will be verified by applying Lemma \ref{Lem02} and \ref{Lem03} to the cases when:
\begin{align}
    & \begin{cases}
        \hspace{-3ex}
        \parbox{10cm}{
            \vspace{-2.5ex}
            \begin{itemize}
            \item $ \bm{\eta}^1 = \bm{\eta}_i $, $ \bm{\eta}_0^1 = \bm{\eta}_{i -1} $, $ \bm{\eta}^2 = \bm{\eta}_0^2 = [1, 1] $, and $ \tilde{\vartheta} = \theta_i $,
            \item $ \bm{\eta}^1 = \bm{\eta}_0^1 = [0, 0] $, $ \bm{\eta}^2 = \bm{\eta}_i $, $ \bm{\eta}_0^2 = \bm{\eta}_{i -1} $, and $ \tilde{\vartheta} = \theta_i $,
        \end{itemize}
        \vspace{-2ex}
    }
    \end{cases}
    i = 1, 2, 3, \dots,
\end{align}
and
\begin{align}
    & \begin{cases}
        \hspace{-3ex}
        \parbox{10.5cm}{
            \vspace{-2.5ex}
            \begin{itemize}
                \item $ \bm{\vartheta}^1 = \bm{\theta}_i $, $ \bm{\vartheta}_0^1 = \bm{\theta}_{i -1} $, $ \bm{\vartheta}^2 = \bm{\theta}_0^2 = [r_1, r_1] $, and $ \bm{\tilde{\eta}} = \bm{\eta}_{i -1} $,
                \item $ \bm{\vartheta}^1 = \bm{\theta}_0^1 = [r_0, r_0] $, $ \bm{\vartheta}^2 = \bm{\theta}_i $, $ \bm{\vartheta}_0^2 = \bm{\theta}_{i -1} $, and $ \bm{\tilde{\eta}} = \bm{\eta}_{i -1} $,
        \end{itemize}
        \vspace{-2ex}
    }
    \end{cases}
    i = 1, 2, 3, \dots,
\end{align}
respectively, and using (A1)--(A5). 

Finally, we verify the energy inequality \eqref{Ener.ineq}. To this end, for every $i \in \N$, let us put $ \bm{\varphi} = \bm{\eta}_{i} -\bm{\eta}_{i -1} $ in \eqref{AP_eta}. Then, invoking (A3), (A4), Taylor's theorem, and
\begin{align}
    & \frac{1}{\tau} -\frac{1}{2} |\bm{g}'|_{[L^\infty(\R)]^2} \geq \frac{3}{4 \tau} \geq \frac{1}{2 \tau}, \mbox{ for all $ \tau \in (0, \tau_*) $,}
\end{align}
we infer that:
\begin{align}
    & \frac{1}{2 \tau} \bigl| \bm{\eta}_{i} -\bm{\eta}_{i -1} \bigr|_H^2 +G(\bm{\eta}_{i}) -G(\bm{\eta}_{i -1})   +\int_\Omega \alpha(\eta_{i}) f_\delta(\nabla \theta_{i}) \, dx
    \\
    & \qquad -\int_\Omega \alpha(\eta_{i -1}) f_{\delta}(\nabla \theta_{i}) \, dx \leq 0, \mbox{ for $ i = 1, 2, 3, \dots $.}
    \label{KS0424-01}
\end{align}
Also, by putting $ \bm{\psi} = \bm{\theta}_{i} -\bm{\theta}_{i -1} $ in \eqref{AP_t^d} and using Young's inequality, one can see from \eqref{DefOfPhi_delta}, \eqref{f_d}, (A2), (A3), and (A4) that:
\begin{align}
    & \frac{1}{\tau} \bigl| A_0(\bm{\eta}_{i -1})^{\frac{1}{2}}(\bm{\theta}_{i} -\bm{\theta}_{i -1}) \bigr|_H^2 
    \\
    & \qquad +\int_\Omega \alpha(\eta_{i -1}) f_\delta(\nabla \theta_i) \, dx -\int_\Omega  \alpha(\eta_{i -1}) f_\delta(\nabla \theta_{i -1}) \, dx 
    \\
    & \qquad +\frac{\delta^2}{2} \int_\Omega |\nabla \theta_{i}|^2 \, dx -\frac{\delta^2}{2} \int_\Omega |\nabla \theta_{i -1}|^2 \, dx 
    \\
    & \qquad +\frac{\kappa_\Gamma}{2} \int_\Gamma |\nabla_\Gamma \theta_{i}|^2 \, d \Gamma -\frac{\kappa_\Gamma}{2} \int_\Gamma |\nabla_\Gamma \theta_{i -1}|^2 \, d \Gamma  \leq 0, \mbox{ for $ i = 1, 2, 3, \dots $.}
    \label{KS0424-02}
\end{align}
The energy inequality \eqref{Ener.ineq} will be obtained by taking the sum of \eqref{KS0424-01} and \eqref{KS0424-02}. 
\bigskip

Thus, we complete the proof of Theorem \ref{Th.t-Dis}.
 \hfill$\Box$

\section{Proof of Main Theorem}
This section is devoted to the proof of Main Theorem \ref{mTh.Sol}. 

Let us fix a constant $ \varepsilon \geq 0 $. Let $0< \tau^* <1$ be the constant given in Theorem \ref{Th.t-Dis}. Let $ \{ \bm{\tilde{\theta}}_0^\delta \}_{\delta \in (0, 1]} = \{ [\tilde{\theta}_0^\delta, \tilde{\theta}_{\Gamma, 0}^\delta] \}_{\delta \in (0, 1]} \subset V_1 $ be as in Corollary \ref{CorOfGm-conv01}.
Additionally, for every $ \delta > 0 $ and $0<\tau < \tau^*$, let us denote by $ \{ [\bm{\eta}_i^\delta, \bm{\theta}_i^\delta] \}_{i = 1}^\infty = \{ [\eta_{i}^\delta, \eta_{\Gamma,\tau,i}^\delta, \theta_{i}^\delta, \theta_{\Gamma,\tau,i}^\delta] \}_{i = 1}^\infty \subset L^2(0, T; [H]^2) $ with $ \{ \bm{\eta}_i^\delta \}_{i = 1}^\infty = \{ [\eta_i^\delta, \eta_{\Gamma, i}^\delta] \}_{i = 1}^\infty \subset L^2(0, T; H) $ and $ \{ \bm{\theta}_i^\delta \}_{i = 1}^\infty = \{ [\theta_i^\delta, \theta_{\Gamma, i}^\delta] \}_{i = 1}^\infty \subset L^2(0, T; H) $ the solution to (AP)$_{\varepsilon, \tau}^\delta$, subject to the following initial condition:
\begin{equation*}
    [\bm{\eta}_0^\delta, \bm{\theta}_0^\delta] = [\eta_0^\delta, \eta_{\Gamma, 0}^\delta, \theta_0^\delta, \theta_{\Gamma, 0}^\delta] = [\bm{\eta}_0, \bm{\tilde{\theta}}_0^\delta] = [\eta_0, \eta_{\Gamma, 0}, \tilde{\theta}_0^\delta, \tilde{\theta}_{\Gamma, 0}^\delta] \mbox{ in $ [H]^2 $.}
\end{equation*}
On this basis, we let:
\begin{equation*}
t_i:=i\tau, \mbox{ for $i=0,1,2,\dots$,}
\end{equation*}
and we define the following time-interpolations:
\begin{align}
    \left\{ \rule{0pt}{64pt} \right. & 
    \\[-128pt]
    & \bigl[ [\bm{\overline{\eta}}]_\tau^\delta(t), [\bm{\overline{\theta}}]_\tau^\delta(t) \bigr] = \bigl[ [\overline{\eta}]_\tau^\delta(t), [\overline{\eta_\Gamma}]_\tau^\delta(t), [\overline{\theta}]_\tau^\delta(t), [\overline{\theta_\Gamma}]_\tau^\delta(t) \bigr] 
    \\
    & \qquad : = [\eta_{i}^\delta, \eta_{\Gamma, i}^\delta, \theta_{i}^\delta, \theta_{\Gamma,i}^\delta] = [\bm{\eta}_i^\delta, \bm{\theta}_i^\delta], \mbox{ if $t_{i-1} \leq t < t_i$,}
    \\
    & \bigl[ [\bm{\underline{\eta}}]_\tau^\delta(t), [\bm{\underline{\theta}}]_\tau^\delta(t) \bigr] = \bigl[ [\underline{\eta}]_\tau^\delta(t), [\underline{\eta_\Gamma}]_\tau^\delta(t), [\underline{\theta}]_\tau^\delta(t), [\underline{\theta_\Gamma}]_\tau^\delta(t) \bigr] 
    \\
    & \qquad : = [\eta_{i -1}^\delta, \eta_{\Gamma, i -1}^\delta, \theta_{i -1}^\delta, \theta_{\Gamma, i -1}^\delta] = [\bm{\eta}_{i -1}^\delta, \bm{\theta}_{i -1}^\delta], \mbox{ if $t_{i-1} \leq t < t_i$,}
    \label{mTh.sol00a}
\\
    & \bigl[ [\bm{\eta}]_\tau^\delta(t), [\bm{\theta}]_\tau^\delta(t) \bigr] = \bigl[ [\eta]_\tau^\delta(t), [\eta_\Gamma]_\tau^\delta(t), [\theta]_\tau^\delta(t), [\theta_\Gamma]_\tau^\delta(t)]
    \\
    & \qquad : =\frac{t-t_{i-1}}{\tau} \bigl[ [\bm{\overline{\eta}}]_\tau^\delta(t), [\bm{\overline{\theta}}]_\tau^\delta(t) \bigr] + \frac{t_i-t}{\tau}\bigl[ [\bm{\underline{\eta}}]_\tau^\delta(t), [\bm{\underline{\theta}}]_\tau^\delta(t) \bigr], \mbox{ if $t_{i-1} \le t <t_i$,}
    \\
    & \hspace{20ex} \mbox{in $ V_\varepsilon \times V_\delta $, for $ i = 1, 2, 3, \dots $.}
\end{align}
Then, from \eqref{Ener.ineq00} and \eqref{Ener.ineq} in Theorem \ref{Th.t-Dis}, we can see that:
\begin{equation}\label{mTh.sol01}
    \begin{array}{c}
        \bigl\{ \bigl[ [\bm{\overline{\eta}}]_\tau^\delta(t), [\bm{\overline{\theta}}]_\tau^\delta(t) \bigr], \bigl[ [\bm{\underline{\eta}}]_\tau^\delta(t), [\bm{\underline{\theta}}]_\tau^\delta(t) \bigr], \bigl[ [\bm{\eta}]_\tau^\delta(t), [\bm{\theta}]_\tau^\delta(t) \bigr] \bigr\} \subset \Lambda_0^1 \times \Lambda_{r_0}^{r_1}, 
        \\[1ex]
        \mbox{ a.e. $ t \in (0, T) $,}
    \end{array}
\end{equation}
and
\begin{align}
    \frac12 & \int_s^t \bigl|\partial_t [\bm{\eta}]_\tau^\delta(\varsigma)\bigr|_H^2\,d \varsigma +\frac12\int_s^t \bigl|A_0([\,\underline{\eta}\,]_\tau^\delta(\varsigma))^{\frac12} \partial_t [\bm{\theta}]_\tau^\delta(\varsigma) \bigr|_H^2\,d \varsigma 
    \\[1ex]
    & \qquad +\mathscr{F}_\varepsilon^\delta([\bm{\overline{\eta}}]_\tau^\delta(t), [\bm{\overline{\theta}}]_\tau^\delta(t)) \le \mathscr{F}_\varepsilon^\delta([\bm{\underline{\eta}}]_\tau^\delta(s), [\bm{\underline{\theta}}]_\tau^\delta(s)) 
    \\[1ex]
    & \leq \mathscr{F}_\varepsilon^\delta(\bm{\eta}_0^\delta, \bm{\theta}_0^\delta) \le \Psi_{\Omega, \Gamma}(\bm{\eta}_0) +G(\bm{\eta}_0) +\sup_{\delta \in (0, 1]} \Phi_\varepsilon^\delta(\alpha(\eta_0); \bm{\tilde{\theta}}_0^\delta) 
    \\
    & =: F_0^* <  \infty, \mbox{ for all $ 0 \leq s \leq t \leq T $.}
    \label{mTh.sol02}
\end{align}

Now, from \eqref{mTh.sol01} and \eqref{mTh.sol02}, we can see the following properties:
\begin{description}
    \item[{\boldmath ($\flat1$-a)}]the class $ \left\{ \begin{array}{l|l}
                [\bm{\overline{\eta}}]_\tau^\delta, [\bm{\underline{\eta}}]_\tau^\delta  & \parbox{4cm}{$0 < \delta \le 1$, $\ 0<\tau < \tau^*$}
        \end{array} \right\}$ is bounded in $L^\infty(0, T; H)$,  
    \item[{\boldmath ($\flat1$-b)}]the class $ \left\{ \begin{array}{l|l}
            [\bm{\eta}]_\tau^\delta, [\bm{\theta}]_\tau^\delta  & \parbox{4cm}{$0 < \delta \le 1$, $\ 0<\tau < \tau^*$}
    \end{array} \right\}$ is bounded in $W_\mathrm{loc}^{1, 2}([0, T); H)$,
\item[{\boldmath ($\flat1$-c)}]the function of time $ t \in [0, T] \mapsto \mathscr{F}_\varepsilon^\delta([\bm{\overline{\eta}}]_\tau^\delta(t), [\bm{\overline{\theta}}]_\tau^\delta(t)) \in [0, \infty) $ and $ t \in [0, T] \mapsto \mathscr{F}_\varepsilon^\delta([\bm{\underline{\eta}}]_\tau^\delta(t), [\bm{\underline{\theta}}]_\tau^\delta(t)) \in [0, \infty) $ are nonincreasing for every $ 0 < \delta \leq 1 $ and $ 0 < \tau < \tau^* $, and  $ \bigl\{ \begin{array}{l|l}
        \mathscr{F}_\varepsilon^\delta(\bm{{\eta}}_0^\delta, \bm{{\theta}}_0^\delta)  & \parbox{1.75cm}{$0 < \delta \le 1$}
        \end{array} \bigr\}$ is bounded, and hence, the class $ \bigl\{ \mathscr{F}_\varepsilon^\delta([\bm{\overline{\eta}}]_\tau^\delta, [\bm{\overline{\theta}}]_\tau^\delta), $ \linebreak $ \mathscr{F}_\varepsilon^\delta([\bm{\underline{\eta}}]_\tau^\delta, [\bm{\underline{\theta}}]_\tau^\delta) \, \bigl| \, 0 < \delta \le 1, 0 < \tau < \tau^* \bigr\} $ is bounded in $ BV(0, T) $.
\end{description}
By virtue of \eqref{mTh.sol01}, ($\flat1$-a), ($\flat1$-b), and ($\flat1$-c), we can apply the compactness theories of Aubin's type \cite[Corollary 4]{MR0916688}, Arzer\'a--Ascoli \cite[Theorem 1.3.1]{MR932730} and Alaoglu--Bourbaki--Kakutani \cite[Theorem 1.2.5]{MR932730}, and can obtain sequences $\{\delta_n\}_{n=1}^\infty \subset (0,1)$, $\{\tau_n\}_{n=1}^\infty \subset (0,\tau_*)$, and a quartet of functions $ [ \bm{\eta}, \bm{\theta} ] = [\eta, \eta_\Gamma, \theta, \theta_\Gamma] \in L^2(0, T; [H]^2)$, with $ \bm{\eta} = [\eta, \eta_\Gamma] \in L^2(0, T; H) $ and $ \bm{\theta} = [\theta, \theta_\Gamma] \in L^2(0, T; H) $, such that 
\begin{align}
    \delta_n \downarrow 0 \mbox{ and } \tau_n \downarrow 0, \mbox{ as $n \to \infty$,}
    \label{mTh.sol10a}
\end{align}
\begin{align}
    & \begin{cases}
        \bm{\eta} = [\eta, \eta_\Gamma] \in W^{1, 2}(0, T; H) \cap L^\infty(0, T; V_\varepsilon),
        \\[0.5ex]
        \bm{\theta} = [\theta, \theta_\Gamma] \in W^{1, 2}(0, T; H), ~ [|D \theta(\cdot)|]_{\theta_\Gamma(\cdot)} \in L^\infty(0, T),
        \\[0.5ex]
        [\bm{\eta}(t), \bm{\theta}(t)] \subset \Lambda_0^1 \times \Lambda_{r_0}^{r_1}, \mbox{ for any $ t \in (0, T) $,}
    \end{cases}
    \label{mTh.sol10b}
\end{align}
\begin{align}
    & \bm{\eta}_n =[\eta_n, \eta_{\Gamma, n}] := [\bm{\eta}]_{\tau_n}^{\delta_n} = \bigl[ [\eta]_{\tau_n}^{\delta_n}, [\eta_{\Gamma}]_{\tau_n}^{\delta_n} \bigr] \to \bm{\eta} = [\eta,\eta_\Gamma] \mbox{ in $C([0, T]; H)$,}\nonumber
\\
    & \qquad \mbox{weakly in $W^{1,2}(0, T; H)$, weakly-$*$ in $L^\infty(0, T; V_\varepsilon)$,}
    \\
    & \qquad \mbox{and weakly-$ * $ in $ L^\infty(Q) \times L^\infty(\Sigma) $, as $n \to \infty$,}
    \label{mTh.sol10c}
\end{align}
\begin{align}
    & \begin{cases}
        \bm{\overline{\eta}}_n = [\overline{\eta}_n, \overline{\eta}_{\Gamma, n}] := [\bm{\overline{\eta}}]_{\tau_n}^{\delta_n} = \bigl[ [\overline{\eta}]_{\tau_n}^{\delta_n}, [\overline{\eta_{\Gamma}}]_{\tau_n}^{\delta_n} \bigr] \to \bm{\eta} = [\eta,\eta_\Gamma]
        \\
        \bm{\underline{\eta}}_{\,n} =[\underline{\eta}_{\,n}, \underline{\eta}_{\,\Gamma, n}] := [\bm{\underline{\eta}}]_{\tau_n}^{\delta_n} = \bigl[ [\underline{\eta}]_{\tau_n}^{\delta_n}, [\underline{\eta_{\Gamma}}]_{\tau_n}^{\delta_n} \bigr] \to \bm{\eta} = [\eta,\eta_\Gamma]
    \end{cases}
    \\
        & \qquad \mbox{in $ L^\infty(0, T; H) $, weakly-$*$ in $L^\infty(0, T; V_\varepsilon)$,}
    \\
    & \qquad \mbox{and weakly-$ * $ in $ L^\infty(Q) \times L^\infty(\Sigma) $, as $n \to \infty$,}
    \label{mTh.sol10d}
\end{align}
\begin{align}
    & \bm{\theta}_n =[\theta_n, \theta_{\Gamma, n}] := [\bm{\theta}]_{\tau_n}^{\delta_n} = \bigl[ [\theta]_{\tau_n}^{\delta_n}, [\theta_{\Gamma}]_{\tau_n}^{\delta_n} \bigr] \to \bm{\theta} = [\theta, \theta_\Gamma]\nonumber
\\
    & \qquad  \mbox{in $C([0, T]; L^2(\Omega) \times H^{\frac{1}{2}}(\Gamma))$, weakly in $W^{1, 2}(0, T; H)$,}
    \\
    & \qquad \mbox{weakly-$*$ in $ L^\infty(0, T; L^2(\Omega) \times H^1(\Gamma)) $,}
    \\
    & \qquad \mbox{and weakly-$*$ in $ L^\infty(Q) \times L^\infty(\Sigma) $, as $n \to \infty$,}
    \label{mTh.sol10e}
\end{align}
\begin{align}
    & \begin{cases}
        \bm{\overline{\theta}}_n = [\overline{\theta}_n, \overline{\theta}_{\Gamma, n}] := [\bm{\overline{\theta}}]_{\tau_n}^{\delta_n} = \bigl[ [\overline{\theta}]_{\tau_n}^{\delta_n}, [\overline{\theta_{\Gamma}}]_{\tau_n}^{\delta_n} \bigr] \to \bm{\theta} = [\theta,\theta_\Gamma]
        \\
        \bm{\underline{\theta}}_{\,n} =[\underline{\theta}_{\,n}, \underline{\theta}_{\,\Gamma, n}] := [\bm{\underline{\theta}}]_{\tau_n}^{\delta_n} = \bigl[ [\underline{\theta}]_{\tau_n}^{\delta_n}, [\underline{\theta_{\Gamma}}]_{\tau_n}^{\delta_n} \bigr] \to \bm{\theta} = [\theta,\theta_\Gamma]
    \end{cases}
    \\
    & \qquad \mbox{in $ L^\infty(0, T; H) $, weakly-$*$ in $L^\infty(0, T; V_\varepsilon)$,}
    \\
    & \qquad \mbox{and weakly-$ * $ in $ L^\infty(Q) \times L^\infty(\Sigma) $, as $n \to \infty$,}
    \label{mTh.sol10f}
\end{align}
and in particular,
\begin{align}
    & [\bm{\eta}(0), \bm{\theta}(0)]= \lim_{n \to \infty} [\bm{\eta}_n(0), \bm{\theta}_n(0)] = \lim_{n \to \infty} [ \bm{\eta}_0, \bm{\tilde{\theta}}_0^{\delta_n}] = [\bm{\eta}_0, \bm{\theta}_0] \mbox{ in $[H]^2$.}
    \label{mTh.sol10i}
\end{align}
Moreover, on account of ($\flat 1$-c) and Helly's selection theorem, we can find a bounded and nonincreasing function $ \mathcal{J}_* : [0, T] \longrightarrow [0, \infty) $, such that
\begin{align}
    & \begin{cases}
        \mathscr{F}_\varepsilon^{\delta_n}(\bm{\underline{\eta}}_{\,n}, \bm{\underline{\theta}}_{\,n}) \to \mathcal{J}_* \mbox{ weakly-$*$ in $ BV(0, T) $, and}
        \\
        \hspace{19.5ex}\mbox{weakly-$*$ in $ L^\infty(0, T) $,}
        \\[1ex]
        \mathscr{F}_\varepsilon^{\delta_n}(\bm{\underline{\eta}}_{\,n}(t), \bm{\underline{\theta}}_{\,n}(t)) \to \mathcal{J}_*(t), \mbox{ for any $ t \in [0, T] $,} 
    \end{cases}
    \mbox{as $ n \to \infty $,}
    \label{ken0411-20}
\end{align}
under a subsequence if necessary. 

Next, we will show that the limit $ [\bm{\eta}, \bm{\theta}] = [\eta, \eta_\Gamma, \theta, \theta_\Gamma] \in L^2(0, T; [H]^2) $ satisfies the variational inequalities in (S1) and (S2). Let us fix any open interval $ I \subset (0, T) $, and let us set $ L^\infty(I; \Omega, \Gamma) := L^\infty(I \times \Omega) \times L^\infty(I \times \Gamma) $. 
Besides, we take any $ \bm{\widetilde{\psi}} = [\widetilde{\psi}, \widetilde{\psi}_\Gamma] \in L^\infty(I; V_\varepsilon) \cap L^\infty(I; \Omega, \Gamma) $, put $ \bm{\psi} = \bm{\widetilde{\psi}}(t) $ in \eqref{AP_eta}, and integrate the both sides over $ I $. Then, with \eqref{AP_eta} and \eqref{mTh.sol00a} in mind, we can see that:
\begin{align}
    & \int_I \bigl( \partial_t \bm{\eta}_n(t), \bm{\widetilde{\psi}}(t) \bigl)_H \, dt  
    \\
    & \quad +\int_I \bigl( g(\overline{\eta}_n(t)), \widetilde{\psi}(t) \bigr)_{L^2(\Omega)} \, dt +\int_I \bigl( g_\Gamma(\overline{\eta}_{\Gamma, n}(t)), \widetilde{\psi}_\Gamma(t) \bigr)_{L^2(\Gamma)} \, dt
    \\
    & \quad +\kappa^2 \int_I \bigl( \nabla \overline{\eta}_n(t), \nabla \widetilde{\psi}(t) \bigr)_{[L^2(\Omega)]^N} \, dt +\int_I \bigl( \nabla_\Gamma (\varepsilon \overline{\eta}_{\Gamma, n}(t)), \nabla_\Gamma (\varepsilon \widetilde{\psi}_{\Gamma}(t)) \bigr)_{[L^2(\Gamma)]^N} \, dt
    \\
    & \quad +\int_I \widetilde{\psi}(t) \alpha'(\overline{\eta}_n(t)) f_\delta(\nabla \overline{\theta}_n(t)) \, dx dt = 0, 
    \label{ken1stEq}
    \\
    & \mbox{for all $ \bm{\widetilde{\psi}} = [\widetilde{\psi}, \widetilde{\psi}_\Gamma] \in L^\infty(I; V_\varepsilon) \cap L^\infty(I; \Omega, \Gamma) $, and $ n = 1, 2, 3, \dots $.}
\end{align} 
Similarly, from \eqref{AP_t^d} and \eqref{mTh.sol00a}, one can observe that:
\begin{align}
    & \int_I \bigl( A_0(\underline{\eta}_{\,n}(t)) \partial_t \bm{\theta}_n(t), \bm{\overline{\theta}}_n(t) -\bm{\widetilde{\varphi}}(t)  \bigr)_H \, dt +\Phi_{\delta_n}^I(\alpha(\underline{\eta}_{\,n}); \bm{\overline{\theta}}_n) \leq \Phi_{\delta_n}^I(\alpha(\underline{\eta}_{\,n}); \bm{\widetilde{\varphi}}),
    \label{ken2ndEq}
    \\
    & \hspace{14ex}\mbox{for all $ \bm{\widetilde{\varphi}} = [\widetilde{\varphi}, \widetilde{\varphi}_\Gamma] \in L^2(I; V_\varepsilon) $, and $ n = 1, 2, 3, \dots $.}
\end{align} 
Additionally, by \eqref{mTh.sol10a}, \eqref{mTh.sol10b}, and \eqref{mTh.sol10d}, we can apply Key-Lemma \ref{keyLem05}, to see that:
\begin{description}
    \item[{\boldmath ($\flat1$-d)}]$ \{ \Phi_{\delta_n}^I(\alpha(\bm{\underline{\eta}_{\,n}});{}\cdot{}) \}_{n = 1}^\infty $ $ \Gamma $-converges to $ \Phi_0^I(\alpha(\bm{\underline{\eta}}); {}\cdot{}) $ on $ L^2(I; H) $, as $ n \to \infty $.
\end{description}

Now, let us take any $ \bm{\varphi} = [\varphi, \varphi_\Gamma] \in D(\Phi_0^I(\alpha(\bm{\eta});{}\cdot{})) $, with an approximating sequence $ \{ \bm{\widehat{\varphi}}_n \}_{n = 1}^\infty = \{ [\widehat{\varphi}_n, \widehat{\varphi}_{\Gamma, n}] \}_{n = 1}^\infty \subset L^2(I; V_1) $, such that:
\begin{align}
    & \bm{\widehat{\varphi}}_n \to \bm{\varphi} \mbox{ in $ L^2(I; H) $, and } \Phi_{\delta_n}^I(\alpha({\underline{\eta}_{\,n}}); \bm{\widehat{\varphi}}_n) \to \Phi_0^I(\alpha(\eta); \bm{\varphi}), \mbox{ as $ n \to \infty $.}
\end{align}
\noeqref{mTh.sol10b, mTh.sol10c, mTh.sol10d, mTh.sol10e, mTh.sol10f} 
Then, by \eqref{mTh.sol10a}--\eqref{mTh.sol10i}, ($\flat 1$-a), ($\flat 1$-b),  ($\flat 1$-c), and ($\flat 1$-d), putting $ \bm{\widetilde{\varphi}} = \bm{\widehat{\varphi}}_n  $ in \eqref{ken2ndEq}, and  letting $ n \to \infty $ yields that:
\begin{align}
    & \int_I \bigl( A_0(\eta(t)) \partial_t \bm{\theta}(t), \bm{\theta}(t) -\bm{{\varphi}}(t)  \bigr)_H \, dt +\Phi_{0}^I(\alpha(\eta); \bm{\theta}) 
    \\
    & \qquad \leq \lim_{n \to \infty} \int_I \bigl( A_0(\underline{\eta}_{\,n}(t)) \partial_t \bm{\theta}_n(t), \bm{\overline{\theta}}_n(t) -\bm{\widehat{\varphi}}_n(t)  \bigr)_H \, dt +\varliminf_{n \to \infty} \Phi_{\delta_n}^I(\alpha(\underline{\eta}_{\,n}); \bm{\overline{\theta}}_n) 
    \\
    & \qquad \leq \varlimsup_{n \to \infty} \left( \int_I \bigl( A_0(\underline{\eta}_{\,n}(t)) \partial_t \bm{\theta}_n(t), \bm{\overline{\theta}}_n(t) -\bm{\widehat{\varphi}}_n(t)  \bigr)_H \, dt +\Phi_{\delta_n}^I(\alpha(\underline{\eta}_{\,n}); \bm{\overline{\theta}}_n) \right)
    \\
    & \qquad = \int_I \bigl( A_0(\eta(t)) \partial_t \bm{\theta}(t), \bm{\theta}(t) -\bm{\varphi}(t)  \bigr)_H \, dt +\varlimsup_{n \to \infty} \Phi_{\delta_n}^I(\alpha(\underline{\eta}_{\,n}); \bm{\overline{\theta}}_n) 
    \\
    & \qquad \leq \lim_{n \to \infty} \Phi_{\delta_n}^I(\alpha(\underline{\eta}_{\,n}); \bm{\widehat{\varphi}}_n) = \Phi_0^I(\alpha(\eta); \bm{\varphi}), 
    \label{varIneq-S2}
    \\
    & 
    \hspace{16ex} \mbox{for any $ \bm{\varphi} = [\varphi, \varphi_\Gamma] \in D(\Phi_0^I(\alpha(\eta));{}\cdot{}) $.}
\end{align}
Since the choices of open interval $ I \subset (0, T) $ and test function $ \bm{\varphi} = [\varphi, \varphi_\Gamma] \in D(\Phi_0^I(\alpha(\eta);{}\cdot{})) $ are arbitrary, it is deduced from \eqref{varIneq-S2} that $ [\bm{\eta}, \bm{\theta}] = [\eta, \eta_\Gamma, \theta, \theta_\Gamma] $ solves the variational inequality in (S2). 
\bigskip

Next, letting $ \bm{\varphi} = \bm{\theta} $ in \eqref{varIneq-S2}, we have:
\begin{align}
    & \lim_{n \to \infty} \Phi_{\delta_n}^I(\alpha({\underline{\eta}}_n); \bm{\overline{\theta}}_n) = \Phi_0^I(\alpha(\eta); \bm{\theta}). 
    \label{ken0410-01}
\end{align}
Meanwhile, by \eqref{mTh.sol10a}--\eqref{mTh.sol10i}, and Key-Lemma \ref{keyLem03}, it is easily seen from that:
\begin{align}
    \left\{ \rule{0pt}{70pt} \right. & 
    \\[-140pt]
    & \varliminf_{n \to \infty} \int_I \int_\Omega \alpha(\underline{\eta}_{\,n}(t)) f_{\delta_n}(\nabla \overline{\theta}_n(t)) \, dx dt 
    \geq \varliminf_{n \to \infty} \int_I \int_{\overline{\Omega}} d \bigl[ \alpha(\underline{\eta}_{\,n}(t)) |D \overline{\theta}_n(t)| \bigr]_{\overline{\theta}_{\Gamma, n}(t)} dt
    \\
    & \quad \geq \int_I \int_{\overline{\Omega}}  d \bigl[ \alpha(\eta(t)) |D \theta(t)| \bigr]_{\theta_{\Gamma}(t)} dt
    \label{ken0410-02}
    \\[1ex]
    & \varliminf_{n \to \infty} \frac{\kappa_\Gamma^2}{2} \int_I \int_\Gamma |\nabla_\Gamma \overline{\theta}_{\Gamma, n}(t)|^2 \, d \Gamma dt \geq \frac{\kappa_\Gamma^2}{2} \int_I \int_\Gamma |\nabla_\Gamma \theta_\Gamma(t)|^2 \, d \Gamma dt,
    \\[1ex]
    & \varliminf_{n \to \infty} \frac{1}{2} \int_I \int_\Omega |\nabla (\delta_n \overline{\theta})(t)|^2 \, dx dt  \geq 0.
\end{align}
Taking into account \eqref{ken0410-01}, \eqref{ken0410-02}, Fact \ref{Fact0}, and the uniform convexity of $L^2$-based norm, one can see that:
\begin{align}
    & \left| \int_I \int_{\overline{\Omega}}  d \bigl[ \alpha(\underline{\eta}_n(t)) |D \overline{\theta}_n(t)| \bigr] dt -\int_I \int_{\overline{\Omega}} d \bigl[ \alpha(\underline{\eta}_n(t)) |D \overline{\theta}_n(t)| \bigr] dt \right|
    \\[1ex]
    & \qquad  = \left| \int_I \int_\Omega \alpha(\underline{\eta}_{\,n}) f_{\delta_n}(\nabla \overline{\theta}_n) \, dx dt -\int_I \int_{\overline{\Omega}} d \bigl[ \alpha(\underline{\eta}_n(t)) |D \overline{\theta}_n(t)| \bigr] dt \right|
    +\delta_n \bigl| \alpha(\underline{\eta}_{\,n}) \bigr|_{L^1(I \times \Omega)} 
    \label{ken0410-03}
    \\[1ex]
    & \qquad \to 0, \mbox{ as $ n \to \infty $,}
\end{align}
and 
\begin{align}
    & \begin{cases}
        \delta_n \overline{\theta}_n \to 0 \mbox{ in $ L^2(I; H^1(\Omega)) $, }
        \\[0.5ex]
        \overline{\theta}_{\Gamma, n} \to \theta_\Gamma \mbox{ in $ L^2(I; H^1(\Gamma)) $,}
    \end{cases}
    \mbox{as $ n \to \infty $.}
    \label{ken0410-04}
\end{align}
In addition, owing to (A4), we can apply Key-Lemma \ref{keyLem04} by replacing:
\begin{align}
    & \left\{ \hspace{-3ex} \parbox{10.25cm}{
        \vspace{-2ex}
        \begin{itemize}
            \item $ \beta $ by $ \alpha(\eta) $, $ \{ \beta_n \}_{n = 1}^\infty $ by $ \bigl\{ \alpha(\underline{\eta}_{\,n}) \bigr\}_{n = 1}^\infty $,
                \vspace{-1ex}
            \item $ \varrho $ by $ \widetilde{\psi} \alpha'(\eta) $, $ \{ \varrho_n \}_{n = 1}^\infty $ by $ \bigl\{ \widetilde{\psi} \alpha'(\overline{\eta}_{n}) \bigr\}_{n = 1}^\infty $, \\
                with $ \widetilde{\psi} \in L^\infty(I; H^1(\Omega)) \cap L^\infty(I \times \Omega) $ as in \eqref{ken1stEq},
                \vspace{-1ex}
            \item $ \{ [\theta_n, \theta_{\Gamma, n}] \}_{n = 1}^\infty $ (in Key-Lemma \ref{keyLem04}) by $ \bigl\{ [\overline{\theta}_n, \overline{\theta}_{\Gamma, n}] \bigr\}_{n = 1}^\infty $,
        \end{itemize}
        \vspace{-2ex}
    } \right.
    ~~~~\mbox{respectively,}
\end{align}
and we can compute that:
\begin{align}
    & \left| \int_I \int_\Omega \widetilde{\psi}(t) \alpha'(\overline{\eta}_n(t)) f_{\delta_n}(\nabla \overline{\theta}_n(t)) \, dx dt -\int_I \int_{\overline{\Omega}} d \bigl[ \widetilde{\psi}(t) \alpha'({\eta}(t)) |D {\theta}(t)| \bigr]_{{\theta}_{\Gamma}(t)} dt \right|
    \\
    & \leq \left| \int_I \int_{\overline{\Omega}} d \bigl[ \widetilde{\psi}(t) \alpha'(\overline{\eta}_n(t)) |D \overline{\theta}_n(t)| \bigr]_{\overline{\theta}_{\Gamma, n}(t)} dt -\int_I \int_{\overline{\Omega}} d \bigl[ \widetilde{\psi}(t) \alpha'({\eta}(t)) |D {\theta}(t)| \bigr]_{{\theta}_{\Gamma}(t)} dt \right|
    \\
    & \quad +\delta_n \bigl| \widetilde{\psi} \alpha'(\overline{\eta}_n) \bigr|_{L^\infty(I \times \Omega)} \to 0, \mbox{ as $ n \to \infty $,}
    \\
    & \mbox{for any $ \widetilde{\psi} \in L^\infty(I; H^1(\Omega)) \cap L^\infty(I \times \Omega) $.}
    \label{ken0410-05}
\end{align}
Invoking \eqref{mTh.sol10a}--\eqref{mTh.sol10f}, and \eqref{ken0410-05}, and letting $ n \to \infty $ in \eqref{ken1stEq}, it is inferred that:
\begin{align}
    & \int_I \bigl( \partial_t \bm{\eta}(t), \bm{\widetilde{\psi}}(t) \bigr)_H \, dt
     +\int_I \bigl( g(\eta(t)), \widetilde{\psi}(t) \bigr)_{L^2(\Omega)} \, dt +\int_I \bigl( g_\Gamma(\eta_\Gamma(t)), \widetilde{\psi}_\Gamma(t) \bigr)_{L^2(\Gamma)} \, dt
    \\
    & \quad +\kappa^2 \int_I \bigl( \nabla \eta(t), \nabla \widetilde{\psi}(t) \bigr)_{[L^2(\Omega)]^N} \, dt +\int_I \bigl( \nabla_\Gamma (\varepsilon \eta_\Gamma(t)), \nabla_\Gamma (\varepsilon \widetilde{\psi}_\Gamma(t)) \bigr)_{[L^2(\Gamma)]^N} \, dt
    \\
    & \quad +\int_I \int_{\overline{\Omega}} d \bigl[ \widetilde{\psi}(t) \alpha'(\eta(t)) |D \theta| \bigr]_{\theta_\Gamma(t)} \, dt = 0,
    \label{ken0410-10}
    \\
    & \mbox{for any $ \bm{\widetilde{\psi}} = [\widetilde{\psi}, \widetilde{\psi}_{\Gamma}] \in L^\infty(I; V_\varepsilon) \cap \Lambda_I^\infty $.}
\end{align}
Since the choices of $ I \subset (0, T) $ and $ \bm{\widetilde{\psi}} = [\widetilde{\psi}, \widetilde{\psi}_\Gamma] \in L^\infty(I; V_\varepsilon) $ are arbitrary, the computation \eqref{ken0410-10} leads to the variational inequality in (S1). 
\bigskip

Finally, we verify the items (A) and (B).  To this end, we first put $ \bm{\widetilde{\psi}} = \bm{\overline{\eta}}_n -\bm{\eta} $ in \eqref{ken1stEq}, and let $ n \to \infty $ with use of \eqref{f_d}, \eqref{mTh.sol10a}--\eqref{mTh.sol10f}, \eqref{ken0410-05}, Key-Lemma \ref{keyLem03} and Young's inequality. Then, one can observe that:
\begin{align}
    & \frac{\kappa^2}{2} \int_I \int_\Omega |\nabla \eta(t)| \, dx dt +\frac{1}{2} \int_I \int_\Gamma |\nabla_\Gamma (\varepsilon \eta_\Gamma(t))|^2 \, d \Gamma dt
    \\
    & \quad \leq  \varliminf_{n \to \infty} \left( \frac{\kappa^2}{2} \int_I \int_\Omega |\nabla \overline{\eta}_n(t)| \, dx dt +\frac{1}{2} \int_I \int_\Gamma |\nabla_{\Gamma} (\varepsilon \overline{\eta}_{\Gamma, n}(t))|^2 \, d \Gamma dt \right)
    \\
    & \quad \leq  \varlimsup_{n \to \infty} \left( \frac{\kappa^2}{2} \int_I \int_\Omega |\nabla \overline{\eta}_n(t)| \, dx dt +\frac{1}{2} \int_I \int_\Gamma |\nabla_{\Gamma} (\varepsilon \overline{\eta}_{\Gamma, n}(t))|^2 \, d \Gamma dt \right)
\end{align}
\begin{align}
    & \quad \leq \frac{\kappa^2}{2} \int_I \int_\Omega |\nabla \eta(t)|^2 \, dx dt +\frac{1}{2} \int_I \int_\Gamma |\nabla_\Gamma \eta_\Gamma(t)|^2 \, d\Gamma dt
    \\
    & \qquad  -\lim_{n \to \infty} \int_I \bigl( \partial_t \bm{\eta}_n(t), (\bm{\overline{\eta}}_n -\bm{\eta})(t) \bigr)_H \, dt 
    \\
    & \qquad -\lim_{n \to \infty} \int_I \bigl( \bm{g}(\bm{\overline{\eta}}_n(t)), (\bm{\overline{\eta}}_n -\bm{\eta})(t) \bigr)_{H} \, dt 
    \\
    & \qquad -\varliminf_{n \to \infty} \int_I \int_\Omega \overline{\eta}_n(t) \alpha'({\overline{\eta}}_n(t)) |\nabla \overline{\theta}_n(t)| \, dx dt 
    \\
    & \qquad +\lim_{n \to \infty} \int_I \int_\Omega \eta(t) \alpha'(\overline{\eta}_n(t)) f_{\delta_n}(\nabla \overline{\theta}_n(t)) \, dx dt
    \\
    & \quad \leq \frac{\kappa^2}{2} \int_I \int_\Omega |\nabla \eta(t)|^2 \, dx dt +\frac{1}{2} \int_I \int_\Gamma |\nabla_\Gamma (\varepsilon \eta_\Gamma(t))|^2 \, d \Gamma dt.
    \label{ken0411-01}
\end{align}
From \eqref{mTh.sol02}, \eqref{mTh.sol10d}, \eqref{ken0410-01}, \eqref{ken0411-01}, and the uniform convexity of $ L^2 $-based norm, it follows that:
\begin{align}
    & \begin{cases}
        \bm{\overline{\eta}}_n \to \bm{\eta} \mbox{ in $ L^2(I; V_\varepsilon) $,}
        \\[1ex]
        \ds \int_I \mathscr{F}_\varepsilon^{\delta_n}(\bm{\overline{\eta}}_n(t), \bm{\overline{\theta}}_n(t)) \, dt \to \int_I \mathscr{F}_\varepsilon(\bm{\eta}(t), \bm{\theta}(t)) \, dt,
    \end{cases}
    \mbox{as $ n \to \infty $,}
\end{align}
and
\begin{align}
    & \left| \int_I \mathscr{F}_\varepsilon^{\delta_n}(\bm{\underline{\eta}}_n(t), \bm{\underline{\theta}}_n(t)) \, dt -\int_I \mathscr{F}_\varepsilon(\bm{\eta}(t), \bm{\theta}(t)) \, dt \right|
    \\
    & \leq \tau_n 
    F_0^* +\left| \int_I \mathscr{F}_\varepsilon^{\delta_n}\bm{\overline{\eta}}_n(t), \bm{\overline{\theta}}_n(t)) \, dt -\int_I \mathscr{F}_\varepsilon(\bm{\eta}(t), \bm{\theta}(t)) \, dt \right|
    \\
    & \to 0, \mbox{ as $ n \to \infty $.}
    \label{ken0411-10}
\end{align}
Meanwhile, for any open set $ A \subset I $, it is deduced from \eqref{f_d}, \eqref{mTh.sol02}--\eqref{mTh.sol10f}, \eqref{ken0410-05}, and Key-Lemma \ref{keyLem03} that:
\begin{align}
    \int_A & \mathscr{F}_\varepsilon(\bm{\eta}(t), \bm{\theta}(t)) \, dt = \sum_{\tilde{I} \in \mathfrak{I}_A} \int_{\tilde{I}} \mathscr{F}_\varepsilon(\bm{\eta}(t), \bm{\theta}(t)) \, dt \leq \sum_{\tilde{I} \in \mathfrak{I}_A} \varliminf_{n \to \infty} \int_{\tilde{I}} \mathscr{F}_\varepsilon(\bm{\underline{\eta}}_n(t), \bm{\underline{\theta}}_n(t)) \, dt
    \\
    & \leq \sum_{\tilde{I} \in \mathfrak{I}_A} \varliminf_{n \to \infty} \int_{\tilde{I}} \mathscr{F}_\varepsilon^{\delta_n}(\bm{\underline{\eta}}_n(t), \bm{\underline{\theta}}_n(t)) \, dt
    \leq \varliminf_{n  \to \infty} \sum_{\tilde{I} \in \mathfrak{I}_A} \int_{\tilde{I}} \mathscr{F}_\varepsilon^{\delta_n}(\bm{\underline{\eta}}_n(t), \bm{\underline{\theta}}_n(t)) \, dt 
    \\
    & = \varliminf_{n \to \infty} \int_{A} \mathscr{F}_\varepsilon^{\delta_n}(\bm{\underline{\eta}}_n(t), \bm{\underline{\theta}}_n(t)) \, dt,
    \label{ken0411-11}
\end{align}
where $ \mathfrak{I}_A $ is the (at most countable) class of pairwise-disjoint open intervals satisfying $ A = \bigcup_{\tilde{I} \in \mathfrak{I}_A} \tilde{I} $. So, as an application of \cite[Proposition 1.80]{MR1857292} to \eqref{ken0411-10} and \eqref{ken0411-11}, we have:
\begin{align}
    & \mathscr{F}_\varepsilon^{\delta_n}(\bm{\underline{\eta}}_n, \bm{\underline{\theta}}_n) \to \mathscr{F}_\varepsilon(\bm{\eta}, \bm{\theta}) \mbox{ weakly-$*$ in $ \mathscr{M}_\mathrm{loc}(I) $, as $ n \to \infty $.} 
    \label{ken0411-21}
\end{align}

Now, by virtue of \eqref{mTh.sol01}--\eqref{mTh.sol10f}, \eqref{ken0411-20}, and \eqref{ken0411-21}, we will obtain that:
\begin{align}
    & \frac{1}{2} \int_s^t |\partial_t \bm{\eta}(\varsigma)|_H^2 \, d \varsigma +\frac{1}{2} \int_s^t |A_0(\eta(\varsigma))\partial_t \bm{\theta}(\varsigma)|_H^2 \, d \varsigma +\mathcal{J}_*(t) \leq \mathcal{J}_*(s),
    \\
    & \hspace{22ex}\mbox{for all $ 0 \leq s \leq t \leq T $,}
    \label{ken0411-30}
\end{align}
and 
\begin{align}
    & \mathcal{J}_*(t) = \mathscr{F}_\varepsilon(\bm{\eta}(t), \bm{\theta}(t)), \mbox{ for a.e. $ t \in (0, T) $.}
    \label{ken0411-31}
\end{align}
In addition, due to \eqref{ken0411-31}, Key-Lemma \ref{keyLem03-00} and the lower semi-continuity of $ L^2 $-based norm, we can say that:
\begin{description}
    \item[{\boldmath ($\flat1$-e)}]the function $ t \in [0, T] \mapsto \mathscr{F}_\varepsilon(\bm{\eta}(t), \bm{\theta}(t)) \in [0, \infty) $ is nonincreasing and lower semi-continuous. 
\end{description}
The items (A) and (B) will be concluded as consequences of \eqref{ken0411-30}, \eqref{ken0411-31}, and ($\flat1$-e). 
\bigskip

Thus, we complete the proof of Main Theorem \ref{mTh.Sol}. \hfill \qed 

\section{Appendix}
\subsection{Notations in BV-theory}
Let $ d \in \N $ be a fixed dimension, and let $U\subset \R^d$ be an open set. We denote by $ \mathscr{M}(U) $ (resp. $ \mathscr{M}_{\rm loc}(U) $) the space of all finite Radon measures (resp. the space of all Radon measures) on $ U $. Recall that the space $ \mathscr{M}(U) $ (resp. $ \mathscr{M}_{\rm loc}(U) $) is the dual of the Banach space $ C_0(U) $ (resp. dual of the locally convex space $ C_{\rm c}(U) $), for any open set $ U \subset \mathbb{R}^d $.

For a function $ u \in L_{\rm loc}^1(U) $ and a point $ x_0 \in U $, we denote by $  [\,\widetilde{u}\,](x_0) $ the \emph{approximate limit} of $ u $ at $ x_0 $, and as well as, we denote by $ [\widetilde{u_+}](x_0) $ (resp. $ [\widetilde{u_-}](x_0) $) the \emph{approximate limit-sup (resp. approximate limit-inf)} of $ u $ at $ x_0 $ (cf. \cite[Section 3.6]{MR1857292}). 
Besides, we set
$$
S_u := \left\{ \begin{array}{l|l}
x \in U &
[\widetilde{u_+}](x) \ne [\widetilde{u_-}](x), ~ \mbox{i.e. $ [\widetilde{u_+}](x) > [\widetilde{u_-}](x) $}
\end{array} \right\}.
$$
   
    A function $ u \in L^1(U) $ (resp. $ u \in L_{\rm loc}^1(U) $)  is called a function of bounded variation, or a BV-function, (resp. a function of locally bounded variation or a BV$\empty_{\rm loc}$-function) on $ U $, iff. its distributional differential $ D u $ is a finite Radon measure on $ U $ (resp. a Radon measure on $ U $), namely $ D u \in \mathscr[{M}(U)]^d $ (resp. $ D u \in [\mathscr{M}_{\rm loc}(U)]^d $).

We denote by $ BV(U) $ (resp. $ BV_{\rm loc}(U) $) the space of all BV-functions (resp. all BV$\empty_{\rm loc}$-functions) on $ U $. For any $ u \in BV(U) $, the Radon measure $ D u $ is called the variation measure of $ u $, and its  total variation $ |Du| $ is called the total variation measure of $ u $. Additionally, the value $|Du|(U)$, for any $u \in BV(U)$, can be calculated as follows:
\begin{equation*}
|Du|(U) = \sup \left\{ \begin{array}{l|l}
    \ds \int_{U} u \ {\rm div} \,\varphi \, dy & \varphi \in [C_{\rm c}^{1}(U)]^d \ \ \mbox{and}\ \ |\varphi| \le 1\ \mbox{on}\ U
\end{array}
\right\}.
\end{equation*}
The space $BV(U)$ is a Banach space, endowed with the following norm:
\begin{equation*}
|u|_{BV(U)} := |u|_{L^{1}(U)} + |D u|(U),\ \ \mbox{for any}\ u\in BV(U).
\end{equation*}
Also, $ BV(U) $ is a complete metric space, endowed with the following distance:
$$
[u, v] \in BV(U)^2 \mapsto |u -v|_{L^1(U)} +\left| \int_U |Du| -\int_U |Dv| \right|.
$$
The topology provided by this distance is called the \em strict topology \em of $ BV(U) $.

Now, by Radon-Nikod\'{y}m's theorem \cite[Theorem 1.28]{MR1857292}, the measure $ Du $ for $ u \in BV(U) $ is decomposed in the absolutely continuous part $ D^a u $ for $ \mathcal{L}^{N} $ and the singular part $ D^s u $, i.e.
$$
Du = D^{a}u +D^{s}u \mbox{ \ in $ \mathscr{M}(U) $.}
$$
Furthermore, (cf. \cite{MR3288271,MR1857292}), this decomposition is precisely expressed as follows:
$$
D^{a} u = \nabla u \, \mathcal{L}^d \, \lfloor \, (U \setminus S_u) \mbox{ and } D^s u = ([\widetilde{u_+}] -[\widetilde{u_-}]) \nu_u \mathcal{H}^{d -1} \, \lfloor \, S_u +D^{s} u \, \lfloor \, (U \setminus S_u).
$$
In this context, $ \nabla u \in L^1(U) $ denotes the Radon-Nikod\'{y}m density of $ D^a u  $ for $ \mathcal{L}^{d} $. The part $ ([\widetilde{u_+}] -[\widetilde{u_-}]) \nu_u \mathcal{H}^{{d} -1} \, \lfloor \, S_u $ is called the \em jump part \em of $ Du $, and it provides an exact expression of the variation at the discontinuities of $ u $. The part $ D^s u \, \lfloor \, (U \setminus S_u) $ is called the \em Cantor part \em of $ Du $ and it is denoted by $D^c u$. We denote by SBV the space of special functions of bounded variation; i.e. those BV functions such that $D^c u=0$. We note that a function defined as
$$
x \in U \mapsto [u]^*(x) := \left\{ \begin{array}{ll}
[\,\widetilde{u}\,](x), & \mbox{if $ x \in U \setminus S_u $,}
\\[1ex]
\ds \frac{[\widetilde{u_+}](x) +[\widetilde{u_-}](x)}{2}, & \mbox{if $ x \in S_u $, and $  [\widetilde{u_+}](x) $ and $ [\widetilde{u_-}](x) $ exist,}
\\[2ex]
0, & \mbox{otherwise,}
\end{array} \right.
$$
provides a \em precise representative \em of $ u \in BV(U) $, $ \mathcal{H}^{d -1} $-a.e. on $ U $.

    In particular, if $ U $ is bounded and the boundary  $\partial U$ is Lipschitz, then the space $BV(U)$ is continuously embedded into $L^{d/(d -1)}(U)$ and compactly embedded into $L^{q}(U)$ for any $1 \le q < N/(N-1)$ (cf. \cite[Corollary 3.49]{MR1857292} or \cite[Theorems 10.1.3 and 10.1.4]{MR3288271}). Also, there exists a bounded linear operator $ \mathfrak{tr}_{{\partial U}} : BV(U) \mapsto L^1(\partial U) $, called the \em trace operator, \em such that $ \mathfrak{tr}_{{\partial U}} \varphi = \varphi|_{\partial U} $ on $ \partial U $ for any $ \varphi \in C^1(\overline{U}) $. The value of trace $ \mathfrak{tr}_{{\partial U}} u \in L^1(\partial U) $ is often abbreviated by $ u \trace{\partial U} $. The trace operator $ \mathfrak{tr}_{{\partial U}} : BV(U) \mapsto L^1(\partial U) $ is continuous with respect to the strict topology of $ BV(U) $. Namely,  $ \mathfrak{tr}_{{\partial U}} u_n \to \mathfrak{tr}_{{\partial U}} u $ in $ L^1(\partial U) $ as $ n \to \infty $, if $ u \in BV(U) $, $ \{ u_n \}_{n = 1}^\infty \subset BV(U) $ and $ u_n \to u $ in the strict topology of $ BV(U) $ as $ n \to \infty $. 
Additionally, if $1 \le r < \infty$, then the space $C^{\infty}(\overline{U})$ is dense in $BV(U) \cap L^{r}(U)$ for the {\em intermediate convergence} (cf. \cite[Definition 10.1.3. and Theorem 10.1.2]{MR3288271}), i.e. for any $u \in BV(U) \cap L^{r}(U)$, there exists a sequence $\{u_{n} \}_{n = 1}^\infty \subset C^{\infty}(\overline{U})$ such that $u_{n} \to u$ in $L^{r}(U)$ and $\int_{U}|\nabla u_{n}|dx \to |Du|(U)$ as $n \to \infty$.
\bigskip

Finally, we recall the notion of \emph{pairing measure,} established by Anzellotti \cite{MR0750538}.
Let $ \Omega \subset \R^N $ with $ N \in \N $ and $ \Gamma := \partial \Omega $ be the bounded domain and its boundary, as in $(\omega1)$ and $(\omega2)$, respectively. Also, we let:
\begin{align*}
    & {L}_\mathrm{div}^q(\Omega; \R^N) := \left\{ \begin{array}{l|l}
        \omega \in L^2(\Omega; \R^N)
        & 
        \mathrm{div} \, \omega \in L^q(\Omega)
    \end{array} \right\}, ~\mbox{for any $ 1 \leq q \leq \infty $.}
\end{align*}

For any $ \omega \in {L}_\mathrm{div}^2(\Omega; \R^N) \cap L^\infty(\Omega; \R^N) $ and any $ u \in BV(\Omega) \cap L^2(\Omega) $, we denote by $ (\omega, Du) $ a finite Radon measure on $ \Omega $, called \emph{pairing measure,} which is defined as a unique extension of the following distribution:
\begin{align}\label{ap.pairMeas00}
    & \varphi \in C_\mathrm{c}^\infty(\Omega) \mapsto -\int_\Omega u \, \mathrm{div} \, (\varphi \omega) \, dx \in \R.
\end{align}
\begin{rem}\label{Rem.pairMeas}
It is known (cf. \cite{MR0750538}) that $ (\omega, Du) \in \mathcal{M}(\Omega) $ is absolutely continuous for the total variation measure $ |Du| $ of $ u $, 
\begin{subequations}\label{ap.pairMeas01}
\begin{align}\label{ap.pairMeas01a}
    & \bigl| {\ts \frac{(\omega, Du)}{|Du|}} \bigr| \leq |\omega|_{L^\infty(\Omega; \R^N)}, \mbox{ $ |Du| $-a.e. in $ \Omega $.}
\end{align}
    and
    \begin{align}
        & (\omega, Du)^a = \omega \cdot \nabla u \mathcal{L}^N \mbox{ and } (\omega, Du)^s = {\ts \frac{(\omega, Du)}{|(\omega, Du)|}} |Du^s| \mbox{ in $ \mathcal{M}(\Omega) $,} 
    \end{align}
\end{subequations}
    where $ (\omega, Du)^a $ and $ (\omega, Du)^s $ are the absolutely continuous part of $ (\omega, Du) $ for $ \mathcal{L}^N $ and the singular part $ (\omega, Du)^s $, respectively. 
\end{rem}

In addition, we define a bounded linear operator $ [(\cdot) \cdot n_\Gamma] : {L}_\mathrm{div}^2(\Omega; \R^N) \longrightarrow H^{-\frac{1}{2}}(\Gamma) $, as a unique extension of the mapping $ \omega \in C^1(\overline{\Omega}; \R^N) \mapsto \omega \cdot n_\Gamma \in C(\Gamma) $. 

Now, Referring to \cite[Theorem 1.9]{MR0750538}, we can easily see the following fact and remark.
\begin{fact}
    \label{Fact10}
    The restriction $ [(\cdot) \cdot n_\Gamma]|_{L^\infty(\Omega; \R^N)} $ forms a non-expansive linear operator from $ {L}_\mathrm{div}^2(\Omega; \R^N) \cap L^\infty(\Omega; \R^N) $ into $ L^\infty(\Gamma) $, such that:
    \begin{align}
        & \bigl|[\omega \cdot n_\Gamma]\bigr| \leq |\omega|_{L^\infty(\Omega; \R^N)} \mbox{ a.e. on $ \Gamma $,}
    \end{align}  
    and
    \begin{align}
        & \int_\Gamma [\omega \cdot n_\Gamma] u \trace{\Gamma} \, d \Gamma  = \int_\Omega (\mathrm{div} \, \omega) u \, dx +\int_\Omega d(\omega, Du)
        \\
        \mbox{for}~ & \mbox{all $ \omega \in {L}_\mathrm{div}^2(\Omega; \R^N) \cap L^\infty(\Omega; \R^N) $ and $ u \in BV(\Omega) \cap L^2(\Omega) $.} 
    \end{align}
\end{fact}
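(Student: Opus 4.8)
The plan is to deduce Fact \ref{Fact10} from the classical Gauss--Green theorem for smooth vector fields by a two-layer approximation, in the spirit of Anzellotti's pairing theory \cite{MR0750538} (Theorem 1.9), using the structural properties of $(\omega, Du)$ already recorded in Remark \ref{Rem.pairMeas} and the defining distribution \eqref{ap.pairMeas00}.

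\emph{Smooth approximation of $\omega$ and the $L^\infty$-bound.} First I would fix $\omega \in L_\mathrm{div}^2(\Omega;\R^N)\cap L^\infty(\Omega;\R^N)$ and produce a sequence $\{\omega_k\}_{k=1}^\infty \subset C^\infty(\overline{\Omega};\R^N)$ with $\omega_k \to \omega$ in $L^2(\Omega;\R^N)$, $\mathrm{div}\,\omega_k \to \mathrm{div}\,\omega$ in $L^2(\Omega)$, and $|\omega_k|_{L^\infty(\Omega;\R^N)} \le |\omega|_{L^\infty(\Omega;\R^N)}$ for all $k$. Using the $C^\infty$-distance function from ($\bm{\omega}$2) and the attached tubular-neighbourhood coordinates, such $\omega_k$ are obtained by composing $\omega$ with near-identity diffeomorphisms that push $\overline{\Omega}$ strictly inside $\Omega$ and then mollifying: neither step increases the sup-norm, while the chain-rule correction in the divergence vanishes in the $L^2$-limit. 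By the definition of $[(\cdot)\cdot n_\Gamma]$ as the continuous extension of $\omega\mapsto \omega\cdot n_\Gamma$ on $C^1(\overline{\Omega};\R^N)$, we get $\omega_k\cdot n_\Gamma \to [\omega\cdot n_\Gamma]$ in $H^{-\frac12}(\Gamma)$; since $|\omega_k\cdot n_\Gamma| \le |\omega_k| \le |\omega|_{L^\infty(\Omega;\R^N)}$ on $\Gamma$, the sequence is bounded in $L^\infty(\Gamma)$, and any weak-$*$ cluster point there coincides with $[\omega\cdot n_\Gamma]$ (both limits agreeing as distributions on $\Gamma$, tested against $C^\infty(\Gamma)$). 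As the closed ball of $L^\infty(\Gamma)$ is weakly-$*$ closed, this yields $[\omega\cdot n_\Gamma]\in L^\infty(\Gamma)$ with $|[\omega\cdot n_\Gamma]| \le |\omega|_{L^\infty(\Omega;\R^N)}$ a.e. on $\Gamma$, hence linearity and non-expansiveness of $[(\cdot)\cdot n_\Gamma]|_{L^\infty(\Omega;\R^N)}$.

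\emph{The Gauss--Green formula.} Here I would argue in three steps. For $\omega\in C^\infty(\overline{\Omega};\R^N)$ and $u\in C^1(\overline{\Omega})$ the identity is the classical divergence theorem applied to $u\omega$, noting that for continuous $\omega$ the pairing measure of \eqref{ap.pairMeas00} is $\omega\cdot Du$, so that $\int_\Omega d(\omega, Du)=\int_\Omega\omega\cdot\nabla u\,dx$. Keeping $\omega$ smooth, I would extend to $u\in BV(\Omega)\cap L^2(\Omega)$ by choosing $u_j\in C^\infty(\overline{\Omega})$ with $u_j\to u$ in $L^2(\Omega)$ and $\int_\Omega|\nabla u_j|\,dx\to|Du|(\Omega)$ (density of $C^\infty(\overline{\Omega})$ for the intermediate convergence, cf. the Appendix): then $u_j\trace{\Gamma}\to u\trace{\Gamma}$ in $L^1(\Gamma)$ by strict continuity of the trace, $\int_\Omega(\mathrm{div}\,\omega)u_j\,dx\to\int_\Omega(\mathrm{div}\,\omega)u\,dx$, and $\int_\Omega\omega\cdot\nabla u_j\,dx\to\int_\Omega d(\omega, Du)$ by \cite[Theorem 3.88]{MR1857292}. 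Finally, keeping $u\in BV(\Omega)\cap L^2(\Omega)$ fixed and taking the $\omega_k$ of the previous paragraph, the left-hand side converges since $\omega_k\cdot n_\Gamma\to[\omega\cdot n_\Gamma]$ weakly-$*$ in $L^\infty(\Gamma)$ against the fixed $u\trace{\Gamma}\in L^1(\Gamma)$; the term $\int_\Omega(\mathrm{div}\,\omega_k)u\,dx$ converges by $L^2$-convergence; and testing \eqref{ap.pairMeas00} against $\varphi\in C_\mathrm{c}^\infty(\Omega)$ gives $(\omega_k, Du)\to(\omega, Du)$ weakly-$*$ in $\mathscr{M}_\mathrm{loc}(\Omega)$, which combined with the uniform domination $|(\omega_k, Du)|\le|\omega|_{L^\infty(\Omega;\R^N)}|Du|$ from \eqref{ap.pairMeas01a} yields $\int_\Omega d(\omega_k, Du)\to\int_\Omega d(\omega, Du)$. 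Passing to the limit in the identity valid for each $\omega_k$ completes the proof.

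\emph{Main obstacle.} The one genuinely delicate point is the last convergence in step three: since $\omega_k\to\omega$ only in $L^2$ with respect to Lebesgue measure, and in general not $|Du|$-a.e. because $|Du|$ may charge Lebesgue-null sets, the quantity $\int_\Omega d(\omega_k, Du)$ cannot be controlled pointwise and must instead be handled through weak-$*$ convergence of the pairing measures together with the uniform domination \eqref{ap.pairMeas01a}, which is exactly what prevents an escape of mass towards $\Gamma$ and at which the hypothesis $\omega\in L^\infty(\Omega;\R^N)$ (rather than merely $\omega\in L_\mathrm{div}^2$) and Anzellotti's construction enter.
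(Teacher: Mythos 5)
Your overall route is sound, and it is essentially the argument behind the result the paper itself does not prove but simply cites as \cite[Theorem 1.9]{MR0750538}: Gauss--Green for smooth data, strict (intermediate) approximation in $u$ with trace continuity and Reshetnyak-type continuity for fixed smooth $\omega$, and then approximation in $\omega$, passing to the limit in the boundary term through weak-$*$ convergence of the normal traces in $L^\infty(\Gamma)$ and in the pairing term through weak-$*$ convergence of $(\omega_k,Du)$ in $\mathscr{M}_\mathrm{loc}(\Omega)$ combined with the uniform domination $|(\omega_k,Du)|\le |\omega|_{L^\infty(\Omega;\R^N)}|Du|$, which indeed gives tightness and hence convergence of the total masses. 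Those limit passages, and the identification of the weak-$*$ cluster points of $\omega_k\cdot n_\Gamma$ with $[\omega\cdot n_\Gamma]$ via distributional testing, are all correct.

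The step that would fail as written is your construction of the approximations $\omega_k$. Composing $\omega$ with a near-identity diffeomorphism $\Phi_k$ pushing $\overline{\Omega}$ into $\Omega$ does not preserve the divergence structure: formally $\mathrm{div}\,(\omega\circ\Phi_k)=\sum_{i,m}(\partial_m\omega_i)(\Phi_k)\,\partial_i\Phi_k^m$, which involves the full distributional gradient $D\omega$ and not only $\mathrm{div}\,\omega$; for $\omega$ merely in $L^\infty(\Omega;\R^N)$ with $\mathrm{div}\,\omega\in L^2(\Omega)$ this ``chain-rule correction'' is not small in $L^2$ --- in general it is not even a measure (take $\omega(x)=(f(x_2),0)$ with $f\in L^\infty$ arbitrary, so that $\mathrm{div}\,\omega=0$, and compose with a small rotation: the divergence of the composed field is a multiple of $f'$, which need not exist in any reasonable sense). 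So the assertion that the correction vanishes in the $L^2$-limit cannot be justified. The standard repairs are: (i) work in boundary charts with a partition of unity and translate $\omega$ in the inward direction --- translations commute with differentiation, so the translated field has divergence equal to the translated $\mathrm{div}\,\omega$, converging in $L^2$ --- and then mollify, neither operation increasing the sup-norm; or (ii) use the Piola transform $\omega_k=(\det D\Phi_k)(D\Phi_k)^{-1}(\omega\circ\Phi_k)$, for which $\mathrm{div}\,\omega_k=(\det D\Phi_k)(\mathrm{div}\,\omega)\circ\Phi_k$, at the price of a factor $1+o(1)$ in the $L^\infty$-bound, which still yields the non-expansiveness after passing to the limit. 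With the approximation lemma fixed in either of these ways, the remainder of your proof goes through.
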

\begin{rem}\label{Prop.pairMeas}
    Let us assume:
    \begin{align}\label{ap.KS00}
        \beta \in H^1(\Omega) \cap L^\infty(\Omega), ~ \omega \in {L}_\mathrm{div}^2(\Omega; \R^N), \mbox{ and } u \in BV(\Omega) \cap L^2(\Omega).
    \end{align}
    Then, we can easily seen that:
    \begin{align}
        \left\{ \rule{0pt}{20pt} \right. & 
        \\[-40pt]
        & (\beta \omega, Du) = \beta^* (\omega, Du) \mbox{ in $ \mathcal{M}(\Omega) $,}
        \label{ap.A}
        \\[1ex]
        & [(\beta \omega) \cdot n_\Gamma] = \beta\trace{\Gamma} [\omega \cdot n_\Gamma] \mbox{ in $ L^\infty(\Gamma) $.}
        \label{ap.bB}
    \end{align}
    In fact, on account of \eqref{ap.pairMeas00} and Fact \ref{Fact10}, we can see that the both $ (\beta \omega, Du)|_{C_\mathrm{c}^\infty(\Omega)} $ and $ \beta^*(\omega, Du)|_{C_\mathrm{c}^\infty(\Omega)} $ coincides with the following distribution:
    \begin{align}
        & \varphi \in C_\mathrm{c}^\infty(\Omega) \mapsto -\int_\Omega u \, \mathrm{div} \, (\varphi \beta \omega) \, dx \in \R.
    \end{align}
    This immediately leads to the identity \eqref{ap.A}. Meanwhile, with \eqref{ap.pairMeas01} and Fact \ref{Fact10} in mind, we can compute that:
    \begin{align}\label{apKS02}
        \bigl< [(\beta \omega) \cdot n_\Gamma], \varphi_\Gamma \bigr>_{H^{\frac{1}{2}}(\Gamma)} ~& = \int_\Omega \bigl( \mathrm{div} \, (\beta \omega) \bigr) [\varphi_\Gamma]^\mathrm{ex} \, dx +\int_\Omega (\beta \omega) \cdot \nabla [\varphi_\Gamma]^\mathrm{ex} \, dx
        \nonumber
        \\
        & = \int_\Omega \mathrm{div} \, \omega \bigl( [\alpha\trace{\Gamma} \varphi_\Gamma]^\mathrm{ex} \bigr) \, dx +\int_\Omega \omega \cdot \nabla [\beta\trace{\Gamma} \varphi_\Gamma]^\mathrm{ex} \, dx
        \nonumber
        \\
        & = \int_\Gamma [\omega \cdot n_\Gamma] (\beta\trace{\Gamma} \varphi_\Gamma) \, d \Gamma = \bigl< \beta\trace{\Gamma} [\omega \cdot n_\Gamma], \varphi_\Gamma \bigr>_{H^{\frac{1}{2}}(\Gamma),}
        \\
        & \hspace{-17ex}\mbox{for any $ \varphi_\Gamma \in H^{\frac{1}{2}}(\Gamma) $ with an extension $ [\varphi_\Gamma]^\mathrm{ex} \in H^1(\Omega) $. }
        \nonumber
    \end{align}
    Since $ \beta\trace{\Gamma}[\omega \cdot n_\Gamma] \in L^\infty(\Gamma) = L^1(\Gamma)^* $, and the embedding $ H^{\frac{1}{2}}(\Gamma) $ is dense, the identity \eqref{ap.bB} will be a straightforward consequence of the above \eqref{apKS02}. 
\end{rem}

\subsection{Extensions of functions}
    Let $ d \in \N $ be a fixed dimension. Let $ \mu $ be a positive measure on $ \R^d $, and let $ B \subset \R^d $ be a $ \mu $-measurable Borel set. For a $ \mu $-measurable function $ u : B \rightarrow \R $, we denote by $ [u]^{\rm ex} $ an extension of $ u $ over $ \R^d $, i.e. $ [u]^{\rm ex} : \R^{d} \rightarrow \R $ is a measurable function such that $ [u]^{\rm ex} = u $ $ \mu $-a.e. in $ B $. In general, the choices of extensions are not necessarily unique.

    If $ 1 < d \in \N $, and an open set $ U \subset \R^d $ has a compact $ C^1 $-boundary $ \partial U $, then the following fact is known as a representative theory of extension.
\begin{fact}\label{Fact1}
(cf. \cite[Proposition 3.21]{MR1857292})There exists a bounded linear operator $ \mathfrak{ex}_{U} : BV(U) \rightarrow BV(\R^N) $, such that:
\begin{itemize}
\item $ \mathfrak{ex}_{U} $ maps any function $ u \in BV(U) $ to an extension $ [u]^{\rm ex} \in BV(\R^N) $;
\item for any $ 1 \leq q < \infty $,  $ \mathfrak{ex}_{U} ({W^{1, q}(U)}) \subset W^{1, q}(\R^N) $, and the restriction $ \mathfrak{ex}_{U}  |_{W^{1, q}(U)} : W^{1, q}(U) \rightarrow W^{1, q}(\R^N) $ forms a bounded and linear operator.
\end{itemize}
\end{fact}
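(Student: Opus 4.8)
The statement is the classical BV-extension theorem, and the plan is the standard localize--flatten--reflect--patch scheme. First I would choose a finite open cover $ \{V_j\}_{j=0}^m $ of $ \overline{U} $ with $ \overline{V_0} \subset U $ and such that, for each $ j = 1, \dots, m $, there is a $ C^1 $-diffeomorphism $ \Psi_j $ from a neighborhood of $ \overline{V_j} $ onto an open ball in $ \R^N $ satisfying $ \Psi_j(V_j \cap U) = \Psi_j(V_j) \cap \R_+^N $ and $ \Psi_j(V_j \cap \partial U) \subset \{x_N = 0\} $; such a cover exists because $ \partial U $ is compact and of class $ C^1 $. Then I would fix a $ C^\infty $ partition of unity $ \{\varphi_j\}_{j=0}^m $ with $ \mathrm{supp}\,\varphi_j \subset V_j $ and $ \sum_{j=0}^m \varphi_j \equiv 1 $ on a neighborhood of $ \overline{U} $, so that each $ u \in BV(U) $ is written on $ U $ as $ u = \sum_{j=0}^m \varphi_j u $.

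The analytic core is the half-space case. For $ v \in BV(\R_+^N) $ I would use the even reflection $ Ev $, defined by $ Ev(x', x_N) := v(x', x_N) $ when $ x_N \geq 0 $ and $ Ev(x', x_N) := v(x', -x_N) $ when $ x_N < 0 $. The key estimate is that $ Ev \in BV(\R^N) $ with
\begin{equation*}
    |Ev|_{L^1(\R^N)} = 2\,|v|_{L^1(\R_+^N)} \quad \text{and} \quad |D(Ev)|(\R^N) = 2\,|Dv|(\R_+^N),
\end{equation*}
and that, for every $ 1 \leq q < \infty $, the same operator maps $ W^{1,q}(\R_+^N) $ into $ W^{1,q}(\R^N) $ with $ |Ev|_{W^{1,q}(\R^N)}^q = 2\,|v|_{W^{1,q}(\R_+^N)}^q $. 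To prove the BV identity I would first check it for $ v \in C^\infty(\overline{\R_+^N}) $ of bounded support, where it reduces to an elementary change of variables together with the observation that the two one-sided traces of $ Ev $ along $ \{x_N=0\} $ coincide, so the reflection produces no surface measure on that hyperplane; then I would extend to arbitrary $ v \in BV(\R_+^N) \cap L^1(\R_+^N) $ by approximating $ v $ in the intermediate (strict) topology of $ BV(\R_+^N) $ recalled in Section 6.1, using lower semicontinuity of the total variation together with strict convergence to turn the resulting $ \liminf $ into an equality, and using the continuity of the trace operator under strict convergence (also recalled in Section 6.1) to control the contribution of $ \{x_N=0\} $.

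With these two ingredients I would set $ \mathfrak{ex}_U u := \sum_{j=0}^m E_j(\varphi_j u) $, where $ E_0(\varphi_0 u) $ is the extension by zero of $ \varphi_0 u $ (which has compact support in $ U $), and for $ j \geq 1 $ the operator $ E_j $ first transports $ \varphi_j u $ to the half-space via $ \Psi_j^{-1} $, then applies the reflection $ E $, transports back via $ \Psi_j $, and extends by zero outside $ V_j $. Here one uses that pull-back under a $ C^1 $-diffeomorphism is bounded and linear on $ BV $ and on $ W^{1,q} $, and that multiplication by the fixed $ C^\infty $ cutoff $ \varphi_j $ is bounded on $ BV $ and on $ W^{1,q} $. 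Linearity of $ \mathfrak{ex}_U $ is immediate, and its boundedness $ BV(U) \to BV(\R^N) $ follows by summing the finitely many bounds, with a constant depending only on $ U $, the charts $ \Psi_j $, and the cutoffs $ \varphi_j $. Since $ W^{1,q}(U) \subset BV(U) $ on the bounded set $ U $ for every $ q \geq 1 $, the operator $ \mathfrak{ex}_U $ is already defined on $ W^{1,q}(U) $, and because each constituent step also maps $ W^{1,q} $ boundedly into $ W^{1,q} $, one concludes $ \mathfrak{ex}_U(W^{1,q}(U)) \subset W^{1,q}(\R^N) $ with the corresponding bound, which is the second bullet. The main obstacle is precisely the half-space reflection estimate in $ BV $: showing that the even reflection creates no concentrated jump part along the reflection hyperplane, which is where the trace theory and the approximation-in-the-strict-topology argument are genuinely needed; all the remaining steps are routine functional-analytic bookkeeping, and in the setting of this paper, where $ \Gamma = \partial\Omega $ is $ C^\infty $, there are no additional regularity complications.
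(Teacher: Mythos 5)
Your proposal is correct and is essentially the same argument as the one behind the paper's statement: the paper gives no proof of this Fact, deferring to \cite[Proposition 3.21]{MR1857292}, and your localize--flatten--reflect--patch construction, with the even-reflection identity $ |D(Ev)|(\R^N) = 2|Dv|(\R_+^N) $ justified by the matching one-sided traces on $ \{x_N = 0\} $ and strict (intermediate) approximation, is precisely the standard proof recorded there. The only detail to keep tidy is the routine bookkeeping that the reflected support remains inside the chart domain before transporting back via $ \Psi_j $ (arrange the sets $ V_j $ so that $ \Psi_j(V_j) $ is symmetric under the reflection, or insert a second cutoff), which is standard and does not affect the argument.
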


\begin{rem}[Harmonic extension]\label{Rem.ex_hm}
    Let $ \Omega \subset \R^N $ be the spatial domain satisfying $(\omega1)$ and $ (\omega2) $. We denote by $ \mathfrak{hm}_{\Gamma} $ the operator of harmonic extension $ \mathfrak{hm}_{{\Gamma}} : H^{\frac{1}{2}}(\Gamma) \to H^1(\Omega) $, which maps any $ \varrho \in H^{\frac{1}{2}}(\Gamma) $ to a (unique) minimizer $ [\varrho]^{\rm hm} \in H^1(\Omega) $ of the following proper l.s.c. and convex function on $ L^2(\Omega) $:
\begin{equation*}
    z \in {L^2(\Omega)} \mapsto \left\{ \begin{array}{ll}
\multicolumn{2}{l}{\ds \frac{1}{2} \int_\Omega |\nabla z|^2 \, dx, \mbox{ if $ z \in H^1(\Omega) $ and $ z = \varrho $ in $ H^{\frac{1}{2}}(\Gamma) $,}}
\\[2ex]
\infty, & \mbox{otherwise.}
\end{array} \right.
\end{equation*}
    Indeed, by the general theories of linear PDEs (cf. \cite[Chapter 3]{MR0241822}, \cite[Chapter 8]{MR0244627} and \cite[Theorem 8.3 in Chapter 2]{MR0350177}), we can check that $ \mathfrak{hm}_{{\Gamma}} H^{\frac{1}{2}}(\Gamma) \subset H^1(\Omega) \cap H_\mathrm{loc}^2(\Omega) $, and:
\begin{equation}\label{harmonic01}
    (\mathit{\Delta} [\varrho]^{\rm hm}, \varphi)_{L^2(\Omega)} = (\nabla [\varrho]^{\rm hm}, \nabla \varphi)_{[L^2(\Omega)]^N} = 0, \mbox{ for all $ \varrho \in H^{\frac{1}{2}}(\Gamma) $ and {$ \varphi \in C_\mathrm{c}^1(\Omega) $},}
\end{equation}
    i.e. the extension $ [\varrho]^{\rm hm} $ solves the harmonic equation $ \mathit{\Delta} [\varrho]^{\rm hm} = 0 $ on $ \Omega $ with the Dirichlet boundary data $ \varrho \in {H^{\frac{1}{2}}(\Gamma)} $. So, by $(\omega1)$, $(\omega2)$, and the general theory of elliptic boundary value problem (cf. \cite{MR0241822,MR0244627}), the harmonic extension $ \mathfrak{hm}_\Gamma $ forms an homeomorphism between $ H^{m -\frac{1}{2}}(\Gamma) \longrightarrow H^{m}(\Omega) $, for $ m = 1, 2, 3, \dots $. 
Additionally, the  variational form \eqref{harmonic01} implies that:
\begin{description}
        \hypertarget{ex0}{}
    \item[\textmd{(ex.0)}]the operator $ \mathfrak{hm}_{\Gamma} : H^{\frac{1}{2}}(\Gamma) \to H^1(\Omega) $ is a bounded linear isomorphism, and in particular, there exists a positive constant $ C_{\Omega}^{\rm hm} $ satisfying
\begin{equation}\label{hm_const}
    |[\varrho]^{\rm hm}|_{H^1(\Omega)} \leq C_{\Omega}^{\rm hm} |\varrho|_{H^{\frac{1}{2}}(\Gamma)}, \mbox{ for any $ \varrho \in H^{\frac{1}{2}}(\Gamma) $;}
\end{equation}
        \vspace{-4ex}
        \hypertarget{ex1}{}
\item[\textmd{(ex.1)}]if $ r \in \R $, and $ r \leq \varrho \in H^{\frac{1}{2}}(\Gamma) $ (resp. $ r \geq \varrho  \in H^{\frac{1}{2}}(\Gamma) $), then $ r \leq [\varrho]^{\rm hm} \in H^1(\Omega) $ (resp. $ r \geq [\varrho]^{\rm hm} \in H^1(\Omega) $).
\end{description}
Moreover, by using the composition $ \mathfrak{hm}_{{\partial (\overline{\Omega}^{\rm C})}} \circ \mathfrak{tr}_{\Gamma} : H^{1}(\Omega) \to H^1(\overline{\Omega}^{\rm C}) $, we may suppose:
\begin{description}
        \hypertarget{ex2}{}
    \item[\textmd{(ex.2)}]if $ r \in \R $, and $ r \leq u \in H^{1}(\Omega) $ (resp. $ \sigma \geq r \in  H^{1}(\Omega) $), then $ r \leq [u]^{\rm ex} \in H^{1}(\R^N) $ (resp. $ r \geq [u]^{\rm ex} \in H^{1}(\R^N) $).
\end{description}
\end{rem}


\begin{thebibliography}{10}

\vspace{-0.5ex}\bibitem{MR2306643}
\newblock M.~Amar, V.~De~Cicco and N.~Fusco,
\newblock A relaxation result in {BV} for integral functionals with
  discontinuous integrands,
\newblock \emph{ESAIM Control Optim. Calc. Var.}, \textbf{13} (2007), 396--412,
\newblock \urlprefix\url{http://dx.doi.org/10.1051/cocv:2007015}.

\vspace{-0.5ex}\bibitem{MR1857292}
\newblock L.~Ambrosio, N.~Fusco and D.~Pallara,
\newblock \emph{Functions of Bounded Variation and Free Discontinuity
  Problems},
\newblock Oxford Mathematical Monographs, The Clarendon Press, Oxford
  University Press, New York, 2000.

\vspace{-0.5ex}\bibitem{MR0750538}
\newblock G.~Anzellotti,
\newblock Pairings between measures and bounded functions and compensated
  compactness,
\newblock \emph{Ann. Mat. Pura Appl. (4)}, \textbf{135} (1983), 293--318
  (1984),
\newblock \urlprefix\url{http://dx.doi.org/10.1007/BF01781073}.

\vspace{-0.5ex}\bibitem{MR3288271}
\newblock H.~Attouch, G.~Buttazzo and G.~Michaille,
\newblock \emph{Variational analysis in {S}obolev and {BV} spaces}, vol.~17 of
  MOS-SIAM Series on Optimization,
\newblock 2nd edition,
\newblock Society for Industrial and Applied Mathematics (SIAM), Philadelphia,
  PA; Mathematical Optimization Society, Philadelphia, PA, 2014,
\newblock \urlprefix\url{http://dx.doi.org/10.1137/1.9781611973488},
\newblock Applications to PDEs and optimization.

\vspace{-0.5ex}\bibitem{MR4529496}
\newblock V.~Berinde, A.~Miranville and C.~Moro\c{s}anu,
\newblock A qualitative analysis of a second-order anisotropic phase-field
  transition system endowed with a general class of nonlinear dynamic boundary
  conditions,
\newblock \emph{Discrete Contin. Dyn. Syst. Ser. S}, \textbf{16} (2023),
  148--186,
\newblock \urlprefix\url{https://doi.org/10.3934/dcdss.2022203}.

\vspace{-0.5ex}\bibitem{MR3281854}
\newblock O.~C\^{a}rj\u{a}, A.~Miranville and C.~Moro\c{s}anu,
\newblock On the existence, uniqueness and regularity of solutions to the
  phase-field system with a general regular potential and a general class of
  nonlinear and non-homogeneous boundary conditions,
\newblock \emph{Nonlinear Anal.}, \textbf{113} (2015), 190--208,
\newblock \urlprefix\url{https://doi.org/10.1016/j.na.2014.10.003}.

\vspace{-0.5ex}\bibitem{MR2577805}
\newblock C.~Cavaterra, C.~G. Gal, M.~Grasselli and A.~Miranville,
\newblock Phase-field systems with nonlinear coupling and dynamic boundary
  conditions,
\newblock \emph{Nonlinear Anal.}, \textbf{72} (2010), 2375--2399,
\newblock \urlprefix\url{https://doi.org/10.1016/j.na.2009.11.002}.

\vspace{-0.5ex}\bibitem{cherfils2012long}
\newblock L.~Cherfils, S.~Gatti, A.~Miranville et~al.,
\newblock Long time behavior of the caginalp system with singular potentials
  and dynamic boundary conditions,
\newblock \emph{Communications on Pure and Applied Analysis}, \textbf{11}
  (2012), 2261--2290.

\vspace{-0.5ex}\bibitem{MR2629535}
\newblock L.~Cherfils, M.~Petcu and M.~Pierre,
\newblock A numerical analysis of the {C}ahn-{H}illiard equation with dynamic
  boundary conditions,
\newblock \emph{Discrete Contin. Dyn. Syst.}, \textbf{27} (2010), 1511--1533,
\newblock \urlprefix\url{https://doi.org/10.3934/dcds.2010.27.1511}.

\vspace{-0.5ex}\bibitem{MR4345548}
\newblock M.~M. Choban and C.~N. Moro\c{s}anu,
\newblock Well-posedness of a nonlinear second-order anisotropic
  reaction-diffusion problem with nonlinear and inhomogeneous dynamic boundary
  conditions,
\newblock \emph{Carpathian J. Math.}, \textbf{38} (2022), 95--116,
\newblock \urlprefix\url{https://doi.org/10.37193/cjm.2022.01.08}.

\vspace{-0.5ex}\bibitem{MR3306048}
\newblock P.~Colli, M.~H. Farshbaf-Shaker and J.~Sprekels,
\newblock A deep quench approach to the optimal control of an {A}llen-{C}ahn
  equation with dynamic boundary conditions and double obstacles,
\newblock \emph{Appl. Math. Optim.}, \textbf{71} (2015), 1--24,
\newblock \urlprefix\url{https://doi.org/10.1007/s00245-014-9250-8}.

\vspace{-0.5ex}\bibitem{MR3661429}
\newblock P.~Colli, G.~Gilardi, R.~Nakayashiki and K.~Shirakawa,
\newblock A class of quasi-linear {A}llen--{C}ahn type equations with dynamic
  boundary conditions,
\newblock \emph{Nonlinear Anal.}, \textbf{158} (2017), 32--59,
\newblock \urlprefix\url{http://dx.doi.org/10.1016/j.na.2017.03.020}.

\vspace{-0.5ex}\bibitem{MR2861821}
\newblock M.~Conti, S.~Gatti and A.~Miranville,
\newblock Asymptotic behavior of the {C}aginalp phase-field system with coupled
  dynamic boundary conditions,
\newblock \emph{Discrete Contin. Dyn. Syst. Ser. S}, \textbf{5} (2012),
  485--505,
\newblock \urlprefix\url{https://doi.org/10.3934/dcdss.2012.5.485}.

\vspace{-0.5ex}\bibitem{MR3130137}
\newblock M.~Conti, S.~Gatti and A.~Miranville,
\newblock Attractors for a {C}aginalp model with a logarithmic potential and
  coupled dynamic boundary conditions,
\newblock \emph{Anal. Appl. (Singap.)}, \textbf{11} (2013), 1350024, 31,
\newblock \urlprefix\url{https://doi.org/10.1142/S0219530513500243}.

\vspace{-0.5ex}\bibitem{MR1201152}
\newblock G.~Dal~Maso,
\newblock \emph{An Introduction to {$\Gamma$}-convergence}, vol.~8 of Progress
  in Nonlinear Differential Equations and their Applications,
\newblock Birkh\"auser Boston, Inc., Boston, MA, 1993,
\newblock \urlprefix\url{http://dx.doi.org/10.1007/978-1-4612-0327-8}.

\vspace{-0.5ex}\bibitem{MR1727362}
\newblock I.~Ekeland and R.~T\'emam,
\newblock \emph{Convex analysis and variational problems}, vol.~28 of Classics
  in Applied Mathematics,
\newblock English edition,
\newblock Society for Industrial and Applied Mathematics (SIAM), Philadelphia,
  PA, 1999,
\newblock \urlprefix\url{http://dx.doi.org/10.1137/1.9781611971088},
\newblock Translated from the French.

\vspace{-0.5ex}\bibitem{MR0102739}
\newblock E.~Gagliardo,
\newblock Caratterizzazioni delle tracce sulla frontiera relative ad alcune
  classi di funzioni in {$n$} variabili,
\newblock \emph{Rend. Sem. Mat. Univ. Padova}, \textbf{27} (1957), 284--305,
\newblock \urlprefix\url{http://www.numdam.org/item?id=RSMUP_1957__27__284_0}.

\vspace{-0.5ex}\bibitem{MR2509552}
\newblock C.~G. Gal, M.~Grasselli and A.~Miranville,
\newblock Nonisothermal {A}llen-{C}ahn equations with coupled dynamic boundary
  conditions,
\newblock in \emph{Nonlinear phenomena with energy dissipation}, vol.~29 of
  GAKUTO Internat. Ser. Math. Sci. Appl.,
\newblock Gakk\={o}tosho, Tokyo, 2008,
\newblock 117--139.

\vspace{-0.5ex}\bibitem{MR2275977}
\newblock S.~Gatti and A.~Miranville,
\newblock Asymptotic behavior of a phase-field system with dynamic boundary
  conditions,
\newblock in \emph{Differential equations: inverse and direct problems}, vol.
  251 of Lect. Notes Pure Appl. Math.,
\newblock Chapman \& Hall/CRC, Boca Raton, FL, 2006,
\newblock 149--170,
\newblock \urlprefix\url{https://doi.org/10.1201/9781420011135.ch9}.

\vspace{-0.5ex}\bibitem{MR4150355}
\newblock Y.~Giga, R.~Nakayashiki, P.~Rybka and K.~Shirakawa,
\newblock On boundary detachment phenomena for the total variation flow with
  dynamic boundary conditions,
\newblock \emph{J. Differential Equations}, \textbf{269} (2020), 10587--10629,
\newblock \urlprefix\url{https://doi.org/10.1016/j.jde.2020.07.015}.

\vspace{-0.5ex}\bibitem{MR2469586}
\newblock A.~Ito, N.~Kenmochi and N.~Yamazaki,
\newblock A phase-field model of grain boundary motion,
\newblock \emph{Appl. Math.}, \textbf{53} (2008), 433--454,
\newblock \urlprefix\url{http://dx.doi.org/10.1007/s10492-008-0035-8}.

\vspace{-0.5ex}\bibitem{MR2548486}
\newblock A.~Ito, N.~Kenmochi and N.~Yamazaki,
\newblock Weak solutions of grain boundary motion model with singularity,
\newblock \emph{Rend. Mat. Appl. (7)}, \textbf{29} (2009), 51--63.

\vspace{-0.5ex}\bibitem{MR2836555}
\newblock A.~Ito, N.~Kenmochi and N.~Yamazaki,
\newblock Global solvability of a model for grain boundary motion with
  constraint,
\newblock \emph{Discrete Contin. Dyn. Syst. Ser. S}, \textbf{5} (2012),
  127--146.

\vspace{-0.5ex}\bibitem{MR2668289}
\newblock N.~Kenmochi and N.~Yamazaki,
\newblock Large-time behavior of solutions to a phase-field model of grain
  boundary motion with constraint,
\newblock in \emph{Current advances in nonlinear analysis and related topics},
  vol.~32 of GAKUTO Internat. Ser. Math. Sci. Appl.,
\newblock Gakk\=otosho, Tokyo, 2010,
\newblock 389--403.

\vspace{-0.5ex}\bibitem{MR1752970}
\newblock R.~Kobayashi, J.~A. Warren and W.~C. Carter,
\newblock A continuum model of grain boundaries,
\newblock \emph{Phys. D}, \textbf{140} (2000), 141--150,
\newblock \urlprefix\url{http://dx.doi.org/10.1016/S0167-2789(00)00023-3}.

\vspace{-0.5ex}\bibitem{MR1794359}
\newblock R.~Kobayashi, J.~A. Warren and W.~C. Carter,
\newblock Grain boundary model and singular diffusivity,
\newblock in \emph{Free boundary problems: theory and applications, {II}
  ({C}hiba, 1999)}, vol.~14 of GAKUTO Internat. Ser. Math. Sci. Appl.,
\newblock Gakk\=otosho, Tokyo, 2000,
\newblock 283--294.

\vspace{-0.5ex}\bibitem{MR0241822}
\newblock O.~A. Lady{\v z}enskaja, V.~A. Solonnikov and N.~N. Ural'ceva,
\newblock \emph{Linear and Quasilinear Equations of Parabolic Type}, vol.~23 of
  Translations of Mathematical Monographs,
\newblock American Mathematical Society, Providence, R.I., 1968.

\vspace{-0.5ex}\bibitem{MR0244627}
\newblock O.~A. Ladyzhenskaya and N.~N. Ural'tseva,
\newblock \emph{Linear and quasilinear elliptic equations},
\newblock Translated from the Russian by Scripta Technica, Inc. Translation
  editor: Leon Ehrenpreis, Academic Press, New York-London, 1968.

\vspace{-0.5ex}\bibitem{MR0350177}
\newblock J.-L. Lions and E.~Magenes,
\newblock \emph{Non-homogeneous boundary value problems and applications.
  {V}ol. {I}},
\newblock Springer-Verlag, New York-Heidelberg, 1972,
\newblock Translated from the French by P. Kenneth, Die Grundlehren der
  mathematischen Wissenschaften, Band 181.

\vspace{-0.5ex}\bibitem{MR3485909}
\newblock A.~Miranville and C.~Moro\c{s}anu,
\newblock Analysis of an iterative scheme of fractional steps type associated
  to the nonlinear phase-field equation with non-homogeneous dynamic boundary
  conditions,
\newblock \emph{Discrete Contin. Dyn. Syst. Ser. S}, \textbf{9} (2016),
  537--556,
\newblock \urlprefix\url{https://doi.org/10.3934/dcdss.2016011}.

\vspace{-0.5ex}\bibitem{MR3432081}
\newblock A.~Miranville and C.~Moro\c{s}anu,
\newblock On the existence, uniqueness and regularity of solutions to the
  phase-field transition system with non-homogeneous {C}auchy-{N}eumann and
  nonlinear dynamic boundary conditions,
\newblock \emph{Appl. Math. Model.}, \textbf{40} (2016), 192--207,
\newblock \urlprefix\url{https://doi.org/10.1016/j.apm.2015.04.039}.

\vspace{-0.5ex}\bibitem{MR4303582}
\newblock A.~Miranville and C.~Moro\c{s}anu,
\newblock \emph{Qualitative and quantitative analysis for the mathematical
  models of phase separation and transition. {A}pplications}, vol.~7 of AIMS
  Series on Differential Equations \& Dynamical Systems,
\newblock American Institute of Mathematical Sciences (AIMS), Springfield, MO,
  2020.

\vspace{-0.5ex}\bibitem{MR2139257}
\newblock J.~S. Moll,
\newblock The anisotropic total variation flow,
\newblock \emph{Math. Ann.}, \textbf{332} (2005), 177--218,
\newblock \urlprefix\url{http://dx.doi.org/10.1007/s00208-004-0624-0}.

\vspace{-0.5ex}\bibitem{MR3268865}
\newblock S.~Moll and K.~Shirakawa,
\newblock Existence of solutions to the {K}obayashi--{W}arren--{C}arter system,
\newblock \emph{Calc. Var. Partial Differential Equations}, \textbf{51} (2014),
  621--656,
\newblock \urlprefix\url{http://dx.doi.org/10.1007/s00526-013-0689-2}.

\vspace{-0.5ex}\bibitem{MR3670006}
\newblock S.~Moll, K.~Shirakawa and H.~Watanabe,
\newblock Energy dissipative solutions to the {K}obayashi--{W}arren--{C}arter
  system,
\newblock \emph{Nonlinearity}, \textbf{30} (2017), 2752--2784,
\newblock \urlprefix\url{https://doi.org/10.1088/1361-6544/aa6eb4}.

\vspace{-0.5ex}\bibitem{MR4352617}
\newblock S.~Moll, K.~Shirakawa and H.~Watanabe,
\newblock Kobayashi-{W}arren-{C}arter type systems with nonhomogeneous
  {D}irichlet boundary data for crystalline orientation,
\newblock \emph{Nonlinear Anal.}, \textbf{217} (2022), Paper No. 112722, 44,
\newblock \urlprefix\url{https://doi.org/10.1016/j.na.2021.112722}.

\vspace{-0.5ex}\bibitem{MR3506285}
\newblock C.~Moro\c{s}anu,
\newblock Well-posedness for a phase-field transition system endowed with a
  polynomial nonlinearity and a general class of nonlinear dynamic boundary
  conditions,
\newblock \emph{J. Fixed Point Theory Appl.}, \textbf{18} (2016), 225--250,
\newblock \urlprefix\url{https://doi.org/10.1007/s11784-015-0274-8}.

\vspace{-0.5ex}\bibitem{MR3751650}
\newblock R.~Nakayashiki and K.~Shirakawa,
\newblock Weak formulation for singular diffusion equation with dynamic
  boundary condition,
\newblock in \emph{Solvability, regularity, and optimal control of boundary
  value problems for {PDE}s}, vol.~22 of Springer INdAM Ser.,
\newblock Springer, Cham, 2017,
\newblock 405--429.

\vspace{-0.5ex}\bibitem{MR1484545}
\newblock G.~Savar\'e and A.~Visintin,
\newblock Variational convergence of nonlinear diffusion equations:
  applications to concentrated capacity problems with change of phase,
\newblock \emph{Atti Accad. Naz. Lincei Cl. Sci. Fis. Mat. Natur. Rend. Lincei
  (9) Mat. Appl.}, \textbf{8} (1997), 49--89,
\newblock
  \urlprefix\url{http://www.bdim.eu/item?fmt=pdf&id=RLIN_1997_9_8_1_49_0}.

\vspace{-0.5ex}\bibitem{MR3038131}
\newblock K.~Shirakawa, H.~Watanabe and N.~Yamazaki,
\newblock Solvability of one-dimensional phase field systems associated with
  grain boundary motion,
\newblock \emph{Math. Ann.}, \textbf{356} (2013), 301--330,
\newblock \urlprefix\url{http://dx.doi.org/10.1007/s00208-012-0849-2}.

\vspace{-0.5ex}\bibitem{MR3082861}
\newblock K.~Shirakawa and H.~Watanabe,
\newblock Energy-dissipative solution to a one-dimensional phase field model of
  grain boundary motion,
\newblock \emph{Discrete Contin. Dyn. Syst. Ser. S}, \textbf{7} (2014),
  139--159,
\newblock \urlprefix\url{http://dx.doi.org/10.3934/dcdss.2014.7.139}.

\vspace{-0.5ex}\bibitem{MR3462536}
\newblock K.~Shirakawa and H.~Watanabe,
\newblock Large-time behavior for a {PDE} model of isothermal grain boundary
  motion with a constraint,
\newblock \emph{Discrete Contin. Dyn. Syst.}, \textbf{1} (2015), 1009--1018,
\newblock \urlprefix\url{http://dx.doi.org/10.3934/proc.2015.1009}.

\vspace{-0.5ex}\bibitem{MR3362773}
\newblock K.~Shirakawa, H.~Watanabe and N.~Yamazaki,
\newblock Phase-field systems for grain boundary motions under isothermal
  solidifications,
\newblock \emph{Adv. Math. Sci. Appl.}, \textbf{24} (2014), 353--400.

\vspace{-0.5ex}\bibitem{MR0916688}
\newblock J.~Simon,
\newblock Compact sets in the space {$L^p(0,T;B)$},
\newblock \emph{Ann. Mat. Pura Appl. (4)}, \textbf{146} (1987), 65--96,
\newblock \urlprefix\url{http://dx.doi.org/10.1007/BF01762360}.

\vspace{-0.5ex}\bibitem{MR932730}
\newblock I.~I. Vrabie,
\newblock \emph{Compactness methods for nonlinear evolutions}, vol.~32 of
  Pitman Monographs and Surveys in Pure and Applied Mathematics,
\newblock Longman Scientific \& Technical, Harlow; John Wiley \& Sons, Inc.,
  New York, 1987,
\newblock With a foreword by A. Pazy.

\end{thebibliography}
\providecommand{\href}[2]{#2}
\providecommand{\arxiv}[1]{\href{http://arxiv.org/abs/#1}{arXiv:#1}}
\providecommand{\url}[1]{\texttt{#1}}
\providecommand{\urlprefix}{URL }

\end{document}